\documentclass[a4paper,12pt]{amsart}

\usepackage{float}
\usepackage{euscript,eufrak,verbatim}
\usepackage{graphicx}
\usepackage{amscd,pifont}
\usepackage[usenames]{color}
\usepackage[colorlinks,linkcolor=red,anchorcolor=blue,citecolor=blue]{hyperref}
\usepackage{bbm}
\usepackage{amsmath}
\usepackage{amsthm}
\usepackage[all]{xy}
\usepackage{graphicx}

\newtheorem{thm}{Theorem}[section]
\newtheorem{prop}[thm]{Proposition}
\newtheorem{df}[thm]{Defintion}
\newtheorem{lem}[thm]{Lemma}
\newtheorem{cor}[thm]{Corollary}

\newtheorem{ex}[thm]{Example}
\newtheorem{exs}[thm]{Examples}
\newtheorem{conj}[thm]{Conjecture}

\def\N{\mathbb{N}}

\def\Ch{{\rm (Ch)}}
\def\S{{\rm (S)}}
\def\So{{\rm (S_0)}}
\def\Z{\mathbb{Z}}
\def\N{\mathbb{N}}
\def\R{\mathbb{R}}

\def\GS{{\rm (GS)}}
\def\GSo{{\rm (GS_0)}}

\makeatletter 
\@addtoreset{equation}{section}
\makeatother  

\title[Construction of some Chowla sequences]{Construction of some Chowla sequences}
\author{Ruxi Shi 
}
\address
{LAMFA, UMR 7352 CNRS, Universit\'e de Picardie,
	33 rue Saint Leu, 80039 Amiens, France}
\email{ruxi.shi@u-picardie.fr}
\begin{document}
	
	\maketitle

\begin{abstract}
	For numerical sequences taking values $0$ or complex numbers of modulus $1$, we define
	Chowla property and Sarnak property. We prove that Chowla property implies Sarnak property. 
	We also prove that for Lebesgue almost every $\beta>1$, the sequence $(e^{2\pi \beta^n})_{n\in \N}$ shares Chowla
	property and consequently is orthogonal to all topological dynamical systems of zero entropy. It is also
	discussed whether the samples of a given random sequence have Chowla property almost surely. Some
	dependent random sequences having almost surely Chowla property are constructed. 
\end{abstract}

\section{Introduction}

Recall that the M\"obius function is an arithmetic function defined by the formula
$$
\mu(n)=
\begin{cases}
1 &\text{if}~n=1,\\
(-1)^k ~&\text{if $n$ is a product of $k$ distinct primes,}\\
0 &\text{otherwise}.
\end{cases}
$$
It is well known that the M\"obius function plays an important role in number theory: the statement 
$$
\sum_{n=1}^{N} \mu(n)=O_{\epsilon}(N^{\frac{1}{2}+\epsilon}), ~\text{for each}~\epsilon>0,
$$ 
is equivalent to Riemann's hypothesis (see \cite{Tit1986}).

Chowla \cite{Cho1965} formulated a conjecture on the higher orders correlations of the M\"obius function: 

\begin{conj}\label{Conj: Chowla}
	For each choice of $0\le a_1< \dots <a_r, r\ge 0, i_s\in \{1,2\}$ not all equal to $2$,
	$$
	\lim\limits_{N\to \infty}\frac{1}{N}\sum_{n=0}^{N-1} \mu^{i_0}(n+a_1)\cdot \mu^{i_1}(n+a_2)\cdot \dots \cdot \mu^{i_r}(n+a_r)=0.
	$$
\end{conj}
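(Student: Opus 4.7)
The plan is to attack Chowla's conjecture through three stages, combining the Matomäki--Radziwiłł short-interval theorem, Tao's entropy decrement argument, and nilsequence techniques from additive combinatorics. Since this is a famous open problem, the sketch below is necessarily speculative; I will indicate where current technology actually delivers cancellation and where the decisive obstruction lies.

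First, I would target the logarithmically averaged version
$$\frac{1}{\log N}\sum_{n=1}^{N}\frac{1}{n}\mu^{i_0}(n+a_1)\mu^{i_1}(n+a_2)\cdots\mu^{i_r}(n+a_r)\longrightarrow 0.$$
When the number of genuine Möbius factors (those with $i_s=1$) is odd, the dilation identity $\mu(pn)=-\mu(n)$ on squarefree $n$ produces a sign flip under $n\mapsto pn$, and an entropy-decrement argument converts the near-dilation-invariance of the correlation along primes $p$ into actual cancellation; this is the strategy underlying the Tao and Tao--Teräväinen results on odd-order logarithmic Chowla. When the number of $\mu$-factors is even, dilation supplies no sign gain, and I would pursue a Fourier decomposition of the correlation against nilsequences, using the inverse theorem for the Gowers $U^s$-norms together with the Green--Tao estimates for $\mu$ against nilsequences to dispose of the structured component, while handling the pseudo-random component by an $L^2$-type argument.

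Second, I would try to upgrade logarithmic to ordinary Cesàro cancellation by dissecting $[1,N]$ into slowly growing blocks $[N_k,N_{k+1}]$ with $N_{k+1}/N_k\to 1$, on which the two averages differ by $o(1)$, and inserting the Matomäki--Radziwiłł theorem at the short-interval scale to control each block. There is no abstract Tauberian shortcut from logarithmic to Cesàro Chowla; one really must estimate the correlations on short intervals directly, and whether this input suffices depends delicately on the pattern $(a_s,i_s)$.

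The main obstacle, which I expect to be the decisive one, is the even-order Cesàro case, already open for the two-point correlation $\frac{1}{N}\sum_{n\le N}\mu(n)\mu(n+h)=o(1)$. Here I would fall back on a dynamical route in the spirit of the Sarnak--Chowla equivalence exploited later in the paper: build the Furstenberg system associated to $\mu$ along a subsequential weak-$*$ limit of its empirical measures, and attempt to show that any such system is a factor of a zero-entropy Kronecker-like system, forcing all nontrivial correlations to vanish. The hard part will be controlling this Furstenberg system without tacitly assuming the cancellation one is trying to prove, since every known rigidity input (pretentious distance bounds, Fourier uniformity at all scales, dilation invariance) either reduces to the logarithmic case or inherits the same parity obstruction. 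A genuinely new ingredient--presumably outside the three pillars above--appears to be required, and absent that, the realistic deliverable of the strategy is a conditional or partial statement rather than the full Conjecture~\ref{Conj: Chowla}.
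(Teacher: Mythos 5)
This statement is Chowla's conjecture itself; the paper does not prove it, and offers no argument for it --- it is recorded as Conjecture~\ref{Conj: Chowla} precisely because it is a famous open problem (the paper only uses it as motivation and proves results about \emph{other} sequences satisfying the analogous correlation conditions). So there is no proof in the paper against which to compare your attempt, and your proposal, by your own admission in its final paragraph, does not constitute a proof either: it is a survey of the currently available machinery (Matom\"aki--Radziwi\l\l, the entropy decrement argument, the nilsequence/inverse-theorem approach) together with an accurate identification of where each piece stops short.

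To be concrete about the gap: the entropy-decrement route gives the \emph{logarithmically averaged} odd-order correlations, and no Tauberian device upgrades logarithmic to Ces\`aro averaging here, as you note; the even-order Ces\`aro case, already open for $\frac{1}{N}\sum_{n\le N}\mu(n)\mu(n+h)$, is untouched by the dilation trick because the sign gain from $\mu(pn)=-\mu(n)$ cancels out, and the dynamical fallback you describe (showing the Furstenberg system of $\mu$ has the required rigidity) presupposes exactly the kind of structural control whose absence is the obstruction. Your assessment that a genuinely new ingredient is required is the correct conclusion, but it means the proposal should be read as a research plan with a known fatal gap at the even-order Ces\`aro step, not as a proof of the statement.
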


Let $T$ be a continuous transformation on a compact metric space $X$. Following Sarnak \cite{Sar2011}, we will say a sequence $(a(n))_{n\in \N}$ of complex numbers is orthogonal to the topological dynamical system $(X,T)$, if 
\begin{equation}\label{eq:Sarnak conjecture}
\lim\limits_{N\to \infty}\frac{1}{N}\sum_{n=0}^{N-1} f(T^nx)a(n)=0,
\end{equation}
for all $x\in X$ and for all $f\in C(X)$ where $C(X)$ is the space of continuous functions on $X$. 

In 2010, Sarnak \cite{Sar2011} formulated the following conjecture.

\begin{conj}\label{Conj: Sarnak}
	The M\"obius function is orthogonal to all the topological dynamical systems of zero entropy. 
\end{conj}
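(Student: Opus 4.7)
The plan is to deduce Conjecture \ref{Conj: Sarnak} from Conjecture \ref{Conj: Chowla} via a now-classical joinings argument: pass to the joint system generated by a zero-entropy orbit and the shift orbit of $\mu$ inside $\{-1,0,1\}^\N$, extract weak-$*$ limits of empirical measures, and invoke a Furstenberg-type disjointness theorem to force any such limit to be a product. The desired orthogonality then reduces to the prime number theorem.

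To carry this out, fix a zero-entropy topological system $(X,T)$, a point $x\in X$, and $f\in C(X)$. Let $Y=\{-1,0,1\}^\N$ with the shift $\sigma$, and consider the joint orbit of $(x,\mu)\in X\times Y$ under $T\times \sigma$. It suffices to prove that every weak-$*$ accumulation point $\nu$ of the empirical measures
\[
\nu_N:=\frac{1}{N}\sum_{n=0}^{N-1}\delta_{(T^n x,\,\sigma^n\mu)}
\]
satisfies $\int f(\xi)\,\eta_0\,d\nu(\xi,\eta)=0$, since this integral agrees with $\lim_k \frac{1}{N_k}\sum_{n<N_k}f(T^n x)\mu(n)$ along a corresponding subsequence.

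Next I would identify the $Y$-marginal of $\nu$. Conjecture \ref{Conj: Chowla} pins down every joint moment of the form $\int \eta_{a_1}^{i_1}\cdots \eta_{a_r}^{i_r}\,d\kappa(\eta)$ on $\{-1,0,1\}^\N$, so by a Stone--Weierstrass density argument there is a unique shift-invariant measure $\kappa$ on $Y$ with those correlations, and $(\pi_Y)_*\nu=\kappa$ for every accumulation point $\nu$. The central structural fact is that $(Y,\sigma,\kappa)$ is disjoint, in the sense of Furstenberg, from every zero-entropy measure-preserving system; morally this encodes the complete independence forced by the vanishing Chowla correlations. Since $(\pi_X)_*\nu$ has zero entropy by the variational principle applied to $(X,T)$, disjointness forces $\nu=(\pi_X)_*\nu\otimes \kappa$, and the integral in question factors as $\bigl(\int f\,d(\pi_X)_*\nu\bigr)\cdot \int \eta_0\,d\kappa(\eta)$. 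The second factor equals $\lim_N \frac{1}{N}\sum_{n<N}\mu(n)=0$ by the prime number theorem, which is the $r=0$ case of Chowla.

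The main obstacle is the structural claim: establishing that $(Y,\sigma,\kappa)$ is disjoint from all zero-entropy systems. The natural route is to show $\kappa$ has the Kolmogorov property, or more strongly that it is isomorphic to a Bernoulli shift, both of which reduce to showing that the vanishing of the higher-order correlations in Conjecture \ref{Conj: Chowla} forces the tail $\sigma$-algebra of $\kappa$ to be trivial. The remaining ingredients -- weak-$*$ compactness, identification of $\kappa$ through its moments, and the product reduction via disjointness -- are essentially automatic once this entropy-theoretic step is in hand.
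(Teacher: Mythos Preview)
First, note that the statement you are addressing is a \emph{conjecture}; the paper does not prove it outright but, like you, reduces it to Conjecture~\ref{Conj: Chowla}. So the relevant comparison is between your outline and the paper's proof of the implication ``Chowla $\Rightarrow$ Sarnak'' (Theorem~\ref{Ch sequences are S sequences}).

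Your strategy is close to the paper's, but there is a genuine gap at the structural step. You want to show that the limiting measure $\kappa$ on $Y=\{-1,0,1\}^\N$ is a Kolmogorov (or Bernoulli) system, hence disjoint from every zero-entropy system. This is false. The map $\eta\mapsto(\eta_n^2)_{n\in\N}$ is a factor map from $(Y,\sigma,\kappa)$ onto the subshift of $\{0,1\}^\N$ generated by $\mu^2$, the indicator of the square-free integers. That factor --- the so-called square-free flow --- has zero topological (hence zero measure-theoretic) entropy. Consequently $\kappa$ has a nontrivial zero-entropy factor, its tail $\sigma$-algebra is \emph{not} trivial, and $\kappa$ is not disjoint from all zero-entropy systems (it is not even disjoint from its own $\{0,1\}^\N$ factor). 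Your proposed route through ``$\kappa$ is K'' therefore cannot succeed, and the product decomposition $\nu=(\pi_X)_*\nu\otimes\kappa$ need not hold.

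The paper's remedy is to work \emph{relatively} over this unavoidable zero-entropy factor. One considers the factor map $\pi:\bigl((U(2)\cup\{0\})^\N,\widehat{\nu}\bigr)\to(\{0,1\}^\N,\nu)$ sending a sequence to its support pattern. Chowla's correlations force $\widehat{\nu}$ to be the relatively independent extension of $\nu$ by i.i.d.\ uniform signs on the nonzero coordinates (Proposition~\ref{FirstDiagram}(3)); this makes $\pi$ \emph{relatively} Kolmogorov (Proposition~\ref{FirstDiagram}(2)). On the other side, since $x$ is completely deterministic, the extension $\sigma$ of $(\{0,1\}^\N,\nu)$ by the $X$-coordinate has relative entropy zero. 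Lemma~\ref{Relatively Independent} then gives relative independence of $\sigma$ and $\pi$ over $(\{0,1\}^\N,\nu)$, and the computation $\mathbb{E}^{\widehat{\nu}}[F\mid\pi^{-1}(\mathcal{B})]=0$ (Lemma~\ref{E=0}) finishes the argument. In short: replace ``$\kappa$ is K'' by ``$\kappa$ is relatively K over its support factor,'' and replace absolute disjointness by relative disjointness.
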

Sarnak \cite{Sar2011} also proved that Chowla's conjecture implies his conjecture.

Recently, El Abdalaoui, Kulaga-Przymus, Lemanczyk and de La Rue \cite{AbaKulLemDe2014} studied these conjectures from ergodic theory point of view. Let $z$ be an arbitrary sequence taking values in $\{-1,0,1\}$ as the M\"obius function. Following El Abdalaoui et al., we say that $z$ satisfies the condition $\Ch$ if it satisfies the condition in Chowla's conjecture; similarly, it satisfies the condition $\So$ if it satisfies the condition in Sarnak's conjecture. They proved that the condition $\Ch$ implies the condition $\So$. El Abdalaoui et al. also provided some non-trivial examples of sequences satisfying the condition $\Ch$ and the condition $\So$ respectively.

Let $(X, T)$ be a topological dynamical system. Recall that a point $x\in X$ is said to be {\em completely deterministic} if for any accumulation point $\nu$ of $\frac{1}{N}\sum_{n=0}^{N-1}\delta_{T^n x}$ where $\delta_x$ is the Dirac measure supported on $x$, the system $(X, T, \nu)$ is of zero entropy. El Abdalaoui et al. also defined the condition $\S$, formally stronger than $\So$ by requiring
\begin{equation*}
\lim\limits_{N\to \infty}\frac{1}{N}\sum_{n=0}^{N-1} f(T^nx)z(n)=0
\end{equation*}
for any topological dynamical system $(X,T)$, any $f\in C(X)$ and any completely deterministic point $x\in X$. They proved the equivalence between the condition $\S$ and the condition $\So$.



Fan \cite{F} defined that a bounded sequence $(c(n))_{n\in \N}$ is \textit{oscillating of order $d$} ($d\ge 1$ being an integer) if  
$$
\lim\limits_{N\to \infty}\frac{1}{N}\sum_{n=0}^{N-1} c(n) e^{2\pi i P(n)}=0
$$
for any $P\in \mathbb{R}_d[t]$ where $\mathbb{R}_d[t]$ is the space of real polynomials of degree smaller than or equal to $d$; it is said to be \textit{fully oscillating} if it is oscillating of order $d$ for every integer $d\ge 1$. It is clear that an oscillating sequence of order $d$ is always oscillating of order $d-1$ for any $d\ge 2$. In \cite{Shi2017}, we gave several equivalent definitions of oscillating sequences, one of which states that a sequence is fully oscillating if and only if it is orthogonal to all affine maps on all compact abelian groups of zero entropy. This result yields that sequences satisfying the condition $\S$ are fully oscillating. 

In what follows, a sequence taking values in $\{-1,0,1\}$ is said to have \textit{Chowla property} (resp. \textit{Sarnak property}) if it satisfies the condition $\Ch$ (resp. the condition $\S$). We consider a more general class of sequences, taking values $0$ or complex numbers of modulus $1$. We define Chowla property and Sarnak property for this class of sequences. We prove that Chowla property implies Sarnak property.
A sequence having Chowla property is called a \textit{Chowla sequence}. We are interested in finding Chowla sequences. We prove that for almost every $\beta>1$, the sequence $(e^{2\pi i \beta^n})_{n\in \N}$  is a Chowla sequence and consequently is orthogonal to all topological dynamical systems of zero entropy. This extends a result of Akiyama and Jiang \cite{AkiJia2016} who proved that for almost every $\beta>1$, the sequence $(e^{2\pi i \beta^n})_{n\in \N}$ is fully oscillating. On the other hand, from a probability point of view, we discuss whether the samples of a given random sequence have almost surely Chowla property. We also construct a sequence of dependent random variables which is a Chowla sequence almost surely.

The paper is organized as follows. In Section \ref{Sec: Preliminary}, we will recall some basic notions in topological and measurable dynamical systems. In Section \ref{Sec: Defitions}, we will introduce the definition of Chowla property for sequences taking values $0$ or complex numbers of modulus $1$ and the definition of Sarnak property for sequences of complex numbers.  In Section \ref{Sec: examples}, it will be proved that for almost every $\beta>1$, the sequence $(e^{2\pi i\beta^n})_{n\in \N}$ is a Chowla sequence.  In Section \ref{Sec: Ch implies S}, we will prove that Chowla property implies Sarnak property. In Section \ref{Sec: Ch random variable}, we will study random sequences: we will give an equivalent condition for stationary random sequences to be Chowla sequences almost surely; we will construct sequences of dependent random variables which are Chowla sequences almost surely;  we will also prove that Sarnak property does not imply Chowla property.


\medskip
Through this paper, we denote by $\mathbb{N}=\{0, 1, 2, \dots\}$ and $\mathbb{N}_{\ge 2}=\mathbb{N}\setminus \{0,1\}$.

\section{Preliminary}\label{Sec: Preliminary}
In this section, we recall some basic notions in dynamical systems. 

\subsection{Quasi-generic measure}
Let $(X, T)$ be a topological dynamical system. Denote by $\mathcal{P}_T(X)$ the set of all $T$-invariant probability measures on $X$ equipped with the weak topology. Under this weak topology, it is well known that the space $\mathcal{P}_T(X)$ is compact, convex and not empty. Let $x\in X$.
Set 
$$
\delta_{N,T,x}:=\frac{1}{N}\sum_{n=0}^{N-1} \delta_{T^nx}.
$$ 
Sometimes, we write simply $\delta_{N,x}$ if the system is fixed.
Denote the set of all quasi-generic measures associated to $x$ by
$$
Q\text{-}gen(x):=\{\nu\in\mathcal{P}_T(X): \delta_{N_k,x}\xrightarrow{k\to \infty}\nu, (N_k)_{k\in \N}\subset \mathbb{N} \}.
$$
By compactness of $\mathcal{P}_T(X)$, the set $Q\text{-}gen(x)$ is not empty. Furthermore, the point $x$ is said to be \textit{completely deterministic} if for any measure $\nu \in Q\text{-}gen(x)$, the entropy $h(T,\nu)$ is zero.
On the other hand, we say that the point $x$ is \textit{quasi-generic} for a measure $\nu\in \mathcal{P}_T(X)$ along $(N_k)_{k\in \N}$ if $\delta_{N_k,x}\rightarrow \nu$ as $k\to \infty$. Moreover, $x$ is said to be \textit{generic} for $\nu$ if the set $Q\text{-}gen(x)$ contains only one element $\nu$.

\subsection{Joinings}
The notion of joinings was first introduced by Furstenberg \cite{Fur1967}. It is a very powerful tool in ergodic theory. In this subsection, we will recall some basic concepts about joinings.

By a measurable dynamical system $(Y, \mathcal{B}, S, \mu)$, we mean that $(Y,\mathcal{B})$ is a measurable space, $S$ is a measurable map from $Y$ to itself and $\mu$ is an $S$-invariant measure, that is to say, $\mu(S^{-1}B)=\mu(B)$ for any $B\in \mathcal{B}$.

Let $(X,\mathcal{A},T,\mu)$ and $(Y,\mathcal{B},S,\nu)$ be two measurable dynamical systems. We say that $(Y,\mathcal{B},S,\nu)$ is a \textit{factor} of $(X,\mathcal{A}, T,\mu)$ (or $(X,\mathcal{A}, T,\mu)$ is an \textit{extension} of $(Y,\mathcal{B},S,\nu)$) if there exists a measurable map $\pi:X\to Y$ such that $\pi\circ S=T\circ \pi$ and the pushforward of $\mu$ via $\pi$ is $\nu$, that is, $\pi_*\mu=\nu$. We call the map $\pi$ the \textit{factor map}. In this case, it is classical that we can identify $\mathcal{B}$ with the $\sigma$-algebra $\pi^{-1}(\mathcal{B})\subset \mathcal{A}$, which is $T$-invariant. On the other hand, any $T$-invariant sub-$\sigma$-algebra $\mathcal{C}\subset\mathcal{A}$ will be identified with the corresponding factor $(X/ \mathcal{C},\mathcal{C}, \mu_{|\mathcal{C}}, T)$.

Let $\pi_i: (X,\mathcal{A}, T,\mu) \to (Y_i,\mathcal{B}_i, S_i,\nu_i)$ be two factor maps for $i=1,2$. We denote by $(Y_1,\mathcal{B}_1,S_1,\nu_1)\bigvee (Y_2,\mathcal{B}_2,S_2,\nu_2)$ the smallest factor of $(X,T,\mathcal{A},\mu)$ containing both $\pi_1^{-1}(\mathcal{B}_1)$ and $\pi_2^{-1}(\mathcal{B}_2)$. It is easy to see that $(Y_1,\mathcal{B}_1,S_1,\nu_1)\bigvee (Y_2,\mathcal{B}_2,S_2,\nu_2)$ is a common extension of $(Y_1,\mathcal{B}_1,S_1,\nu_1)$ and $ (Y_2,\mathcal{B}_2,S_2,\nu_2)$.

Recall that a measurable dynamical system is said to be a \textit{Kolmogorov system} if any non-trivial factor of it has positive entropy. Let $\pi: (X,\mathcal{A}, T,\mu) \to (Y,\mathcal{B}, S, \nu)$ be a factor map. The \textit{relative entropy} of the factor map $\pi$ is defined by the quantity $h(T,\mu)-h(S,\nu)$.  We say that the factor map $\pi$ is \textit{relatively Kolmogorov} if $\pi$ is non trivial and if for any intermediate factor $(Z,\mathcal{C}, U, \rho)$ with factor map $\pi': (Z,\mathcal{C}, U, \rho) \to (Y,\mathcal{B}, S, \nu)$, the relative entropy of $\pi'$ is positive unless $\pi'$ is an isomorphism.

Given measurable dynamical systems $(X_i, \mathcal{B}, T_i, \mu_i)$ for $i=1,2,\dots, k$, we recall that a \textit{joining} $\rho$ is a $T_1\times \dots \times T_k$-invariant probability measure on $(X_1\times\dots \times X_k, \mathcal{B}_1\otimes \dots \otimes \mathcal{B}_k)$ such that $(\pi_i)_{*}(\rho)=\mu_i$, where $\pi_i$ is the projection from $X_1\times\dots \times X_k$ to $X_i$ for $1\le i\le k$. The set consisting of all joinings is denoted by $J(T_1, T_2, \dots, T_k)$. Following Furstenberg \cite{Fur1967}, $(X_1, \mathcal{B}, T_1, \mu_1)$ and $(X_2, \mathcal{B}, T_2, \mu_2)$ are said to be \textit{disjoint} if their joining is unique, which is $\mu_1\otimes \mu_2$.

Let $\pi_i: (X_i,\mathcal{A}_i, T_i,\mu_i) \to (Y,\mathcal{B}, S,\nu)$ be two factor maps for $i=1,2$. Given $\rho\in J(S,S)$, the \textit{relatively independent extension} of $\rho$ is defined by $\widehat{\rho}\in J(T_1,T_2)$ such that
$$
\widehat{\rho}(A_1\times A_2):=\int_{Y\times Y} \mathbb{E}[1_{A_1}| \pi_1^{-1}(\mathcal{B})](x) \mathbb{E}[1_{A_2}| \pi_2^{-1}(\mathcal{B})](y) d\rho(x,y),
$$
for each $A_i\in \mathcal{A}_i, i=1,2.$  We say that the factor maps $\pi_1$ and $\pi_2$ are \textit{relatively independent} over their common factor if $J(T_1, T_2)$ only consists of $\widehat{\Delta}$ where $\Delta\in J(S,S)$  is given by $\Delta(B_1\times B_2):=\nu(B_1\cap B_2)$ for any $B_1, B_2\in \mathcal{B}.$


%


The following lemma is classical.
\begin{lem}[\cite{th}, Lemma 3] \label{Relatively Independent}
	Let $\pi_i: (X_i,\mathcal{A}_i, T_i,\mu_i) \to (Y,\mathcal{B}, S,\nu)$ be two factor maps for $i=1,2$. Suppose that $\pi_1$ is of relative zero entropy and $\pi_2$ is relatively Kolmogorov. Then $\pi_1$ and $\pi_2$ are relatively independent.
\end{lem}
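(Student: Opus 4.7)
My plan is to analyze an arbitrary joining $\lambda \in J(T_1, T_2)$---with the (implicit) assumption that $\lambda$ projects to $\Delta$ under $\pi_1\times\pi_2$, so that the two copies of the common factor $(Y,S,\nu)$ are identified in the joint system---and to show $\lambda=\widehat{\Delta}$.

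First I would introduce bookkeeping in the joint system $(X_1\times X_2, T_1\times T_2, \lambda)$: let $\mathcal{F}_i$ denote the pullback of $\mathcal{A}_i$ via the $i$-th coordinate projection, and $\mathcal{C}_i$ the pullback of $\pi_i^{-1}(\mathcal{B})$. The projection assumption identifies $\mathcal{C}_1=\mathcal{C}_2=:\mathcal{C}$ in $\lambda$. Because the marginal of $\lambda$ on $X_i$ equals $\mu_i$, any relative entropy or relative Pinsker $\sigma$-algebra computed inside one of the $\mathcal{F}_i$ over $\mathcal{C}$ agrees with the one computed in the marginal system. The hypotheses then translate directly into $h_\lambda(\mathcal{F}_1\mid \mathcal{C}) = 0$ (from the zero relative entropy of $\pi_1$) and into the fact that the relative Pinsker $\sigma$-algebra of $\mathcal{F}_2$ over $\mathcal{C}$ coincides with $\mathcal{C}$ itself (from the relative Kolmogorov property of $\pi_2$).

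The main step would then be to deduce that $\mathcal{F}_1$ and $\mathcal{F}_2$ are relatively independent over $\mathcal{C}$ under $\lambda$, which is exactly the statement $\lambda=\widehat{\Delta}$. The underlying intuition is clean: $\mathcal{F}_1$ is a zero-entropy extension of $\mathcal{C}$, $\mathcal{F}_2$ is a relatively Kolmogorov extension of $\mathcal{C}$, and these two structures should not interact. Concretely, monotonicity of conditional entropy yields $h_\lambda(\mathcal{F}_2 \mid \mathcal{F}_1 \vee \mathcal{C}) \leq h_\lambda(\mathcal{F}_2 \mid \mathcal{C})$; equality would follow once we know that the zero-entropy extension $\mathcal{F}_1$ of $\mathcal{C}$ cannot ``see'' anything of $\mathcal{F}_2$ beyond $\mathcal{C}$, and then the Abramov--Rokhlin chain rule combined with $h_\lambda(\mathcal{F}_1\mid\mathcal{C})=0$ forces the desired conditional independence.

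The main obstacle is making this equality of conditional entropies rigorous. The cleanest route is to invoke the classical characterization of relative Kolmogorov (typically attributed to Thouvenot, matching the reference used in the statement of the lemma): namely, $\pi_2$ is relatively Kolmogorov if and only if $(X_2,T_2,\mu_2)$ is relatively disjoint, over $(Y,S,\nu)$, from every extension of $(Y,S,\nu)$ of zero relative entropy. This characterization in turn rests on the relative Rokhlin--Sinai theorem, which expresses the relative Pinsker $\sigma$-algebra as $\bigcap_{n\ge 0}T_2^{-n}(\mathcal{P}\vee\pi_2^{-1}(\mathcal{B}))$ for any relatively generating partition $\mathcal{P}$; that representation is the technical heart of the argument. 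Once this ingredient is accepted, applying it with the factor map $\pi_1$ playing the role of the zero relative entropy extension concludes the proof.
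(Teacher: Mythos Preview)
The paper does not supply a proof of this lemma at all: it is quoted verbatim as Lemma~3 of Thouvenot~\cite{th} and used as a black box. So there is no ``paper's own proof'' to compare your proposal against.

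Your outline is a reasonable sketch of the standard argument, and you correctly identify the real content: the relative Rokhlin--Sinai description of the relative Pinsker $\sigma$-algebra, and the equivalent characterization of a relatively Kolmogorov extension as one that is relatively disjoint (over the common factor) from every zero-relative-entropy extension. Note, however, that your final paragraph is essentially circular with respect to the citation: the ``classical characterization'' you invoke \emph{is} the content of Thouvenot's lemma, so at that point you are citing the same result the paper cites rather than proving it. If your goal were a self-contained proof, you would need to actually carry out the relative Rokhlin--Sinai argument (tail $\sigma$-algebra of a relatively generating partition equals the relative Pinsker factor) rather than defer to it. Also, the intermediate heuristic you give---that equality of the conditional entropies $h_\lambda(\mathcal{F}_2\mid\mathcal{F}_1\vee\mathcal{C})=h_\lambda(\mathcal{F}_2\mid\mathcal{C})$ together with Abramov--Rokhlin ``forces the desired conditional independence''---is not literally correct as stated: equality of relative entropies does not by itself imply conditional independence of $\sigma$-algebras. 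That step genuinely requires the relative-K structure (or equivalently the disjointness characterization), which is why you end up invoking Thouvenot anyway.
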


\subsection{Shift system}
Let $X$ be a nonempty compact metric space. The product space $X^{\mathbb{N}}$ endowed with the product topology is a compact metric space. Coordinates of $x\in X^{\N}$ will be denoted by $x(n)$ for $n\in \N$. 

On the space $X^\N$ there is a natural continuous action by the \textit{shift} $S: X^{\mathbb{N}}\to X^{\mathbb{N}}$ defined by $(x(n))_{n\in \N} \mapsto (x(n+1))_{n\in \N}$. We call the topological dynamical system $(X^\N, S)$ a \textit{(topological) shift system}. Through this paper, we always denote by $S$ the shift on the shift system.

We recall that the subsets of $X^\N$ of the form 
$$
[C_0, C_1, \dots, C_k]:=\{x\in X^\N: x(j)\in C_j,  ~\forall 0\le j \le k \}
$$
where $k\ge 0$ and $C_0, \dots, C_k$ are open subsets in $X$ , are called \textit{cylinders} and they form a basis of the product topology of $X^\N$. It follows that any Borel measure on $X^\mathbb{N}$ is uniquely determined by the values on these cylinders. Through this paper, we always denote $\mathcal{B}$ the Borel $\sigma$-algebra on the shift system. For any Borel measure $\mu$ on $X^\mathbb{N}$, we sometimes write $(X^\N, S, \mu)$ for  the measurable shift system $(X^\N, \mathcal{B}, S, \mu)$.


In what follows, we denote by $F:X^{\mathbb{N}}\to X$ the projection on the first coordinate, i.e. $(x(n))_{n\in \N} \mapsto x(0)$. It is easy to see that the projection is continuous. 

\section{Definitions of Chowla property}\label{Sec: Defitions}

In this section, we will show how to generalize Chowla property from $\{-1,0, 1\}^{\N}$ to $(S^1\cup \{0\})^{\N}$ where $S^1$ is the unit circle. Before we really get into the generalization of Chowla property, we first generalize Sarnak property from the sequences in $\{-1,0, 1\}^\N$ to ones in $\mathbb{C}^\mathbb{N}$. To this end, we define the condition $({\rm GS_0})$ for sequences of complex numbers as follows. 

\subsection{Sarnak sequences}
Here, we first give the definition of Sarnak sequences and then show several equivalent definitions of Sarnak sequences.
\begin{df}
	We say that a sequence $(z(n))_{n\in \N}$ of complex numbers satisfies the \textit{condition $({\rm GS_0})$} if for each homeomorphism $T$ of a compact metric space $X$ with topological entropy $h(T)=0$, we have 
	$$
	\lim\limits_{N\to \infty}\frac{1}{N}\sum_{n=0}^{N-1} f(T^nx)z(n)=0,
	$$ 
	for all $f\in C(X)$ and for all $x\in X$.
\end{df}

It is not hard to see that the the condition $({\rm GS_0})$ is a generalization of the condition $\So$. Similarly, we generalize the condition $\S$ from $\{-1,0, 1\}^\N$ to $\mathbb{C}^\mathbb{N}$.

\begin{df}\label{(S) Sequences }
	We say that a sequence $(z(n))_{n\in \N}$ of complex numbers satisfies the \textit{condition $\GS$} if for each homeomorphism $T$ of a compact metric space $X$,
	$$
	\lim\limits_{N\to \infty}\frac{1}{N}\sum_{n=0}^{N-1} f(T^nx)z(n)=0,
	$$
	for all $f\in C(X)$ and for all completely deterministic points $x\in X$.
\end{df}

It is observed that the condition $\GS$ implies the condition $\GSo$. Similarly to the proof of $\So \Rightarrow \S$ in \cite{AbaKulLemDe2014}, the  condition $\GSo$ implying the condition $\GS$ follows from the characterization of completely deterministic points by Weiss \cite{w}:  

\vspace{8pt}
{\em A sequence $u$ is completely deterministic if and only if, for any $\epsilon>0$ there exists K such that, after removing from u a subset of density less than $\epsilon$, what is left can be covered by collection $C$ of $K$-blocks such that $|C|< 2^{\epsilon K}$.}
\vspace{8pt}

\noindent Therefore, we obtain that the condition $\GSo$ is equivalent to the condition $\GS$. In what follows, we say that a sequence has \textit{Sarnak property} if it satisfies the condition $\GSo$ (or equivalently the condition $\GS$). Such sequence is called a  \textit{Sarnak sequence}. 


Now we show several equivalent definitions of Sarank sequences.
Following the definitions of MOMO (M\"obius Orthogonality on Moving Orbits) property and strong MOMO property in \cite{AbaLemDe2017}, we define the following. 
\begin{df}
	We say that a sequence $(z(n))_{n\in \N}$ of complex numbers is \textbf{orthogonal on moving orbits} to a dynamical system $(X,T)$ if for any increasing sequence of integers $0=b_0<b_1<b_2<\cdots$ with $b_{k+1}-b_k\to \infty$, for any sequence $(x_k)_{k\in \N}$ of points in $X$, and any $f\in C(X)$,
	\begin{equation}
	\lim\limits_{K\to \infty}\frac{1}{b_K}\sum_{k=0}^{K-1} \sum_{b_k\le b<b_{k+1}} f(T^{n-b_k}x_k)z(n)=0.
	\end{equation} 
\end{df}

It is easy to see that the a sequence orthogonal on moving orbits to a dynamical system $(X,T)$ is always orthogonal to $(X,T)$.

\begin{df}
	We say that a sequence $(z(n))_{n\in \N}$ of complex numbers is \textbf{strongly orthogonal on moving orbit} to a dynamical system $(X,T)$ if for any increasing sequence of integers $0=b_0<b_1<b_2<\cdots$ with $b_{k+1}-b_k\to \infty$, for any sequence $(x_k)_{k\in \N}$ of points in $X$, and any $f\in C(X)$,
	\begin{equation}\label{eq:SOMO}
	\lim\limits_{K\to \infty}\frac{1}{b_K}\sum_{k=0}^{K-1} \left| \sum_{b_k\le b<b_{k+1}} f(T^{n-b_k}x_k)z(n) \right|=0.
	\end{equation} 
\end{df}

Obviously, if putting $f=1$ in (\ref{eq:SOMO}), then we have that if a sequence $(z(n))_{n\in \N}$ is strongly orthogonal on moving orbits to a dynamical system $(X,T)$, then
\begin{equation}\label{eq:(5)}
\lim\limits_{K\to \infty}\frac{1}{b_K}\sum_{k=0}^{K-1} \left| \sum_{b_k\le b<b_{k+1}} z(n) \right|=0,
\end{equation}
for any increasing sequence of integers $0=b_0<b_1<b_2<\cdots$ with $b_{k+1}-b_k\to \infty$.

\begin{df}
	We say that a sequence $(z(n))_{n\in \N}$ of complex numbers is \textbf{uniformly orthogonal} to a dynamical system $(X,T)$ if for any $f\in C(X)$,  the sums $\frac{1}{N}\sum_{n=1}^{N} f(T^nx)z(n)$ is uniformly convergent to zeros for every $x\in X$.
\end{df}

By Footnote 4 of Page 4 in \cite{AbaLemDe2017}, we have that if a sequence is strongly orthogonal on moving orbits to a dynamical system $(X,T)$, then it is uniformly orthogonal to $(X,T)$.

Combining Corollary 9 and Corollary 10 in \cite{AbaLemDe2017}, we obtain directly the following equivalent definitions of Sarnak sequences.
\begin{prop}\label{prop:equivalent definitions}
	Let $(z(n))_{n\in \N}$ be a sequence of complex numbers. The following are equivalent.
	\begin{itemize}
		\item [(1)] The sequence $(z(n))_{n\in \N}$ is a Sarnak sequence.
		\item [(2)] The sequence $(z(n))_{n\in \N}$ is orthogonal on moving orbits to all dynamical systems of zero entropy.
		\item [(3)] The sequence $(z(n))_{n\in \N}$ is strongly orthogonal on moving orbits to all dynamical systems of zero entropy.
		\item [(4)] The sequence $(z(n))_{n\in \N}$ is uniformly orthogonal to all dynamical systems of zero entropy.
	\end{itemize}
\end{prop}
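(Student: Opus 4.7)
The plan is to close a cycle of implications among (1)--(4), relying on the equivalences proved for the M\"obius function in \cite{AbaLemDe2017} (Corollaries 9 and 10); inspection of those arguments shows they use only boundedness of the sequence, not any arithmetic feature of $\mu$, so they transport verbatim to an arbitrary bounded sequence of complex numbers.

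First I would dispose of the easy implications. The implication (3) $\Rightarrow$ (2) is just the triangle inequality, since removing the absolute values can only shrink the sum. The implication (3) $\Rightarrow$ (4) is the content of Footnote 4 of \cite{AbaLemDe2017}, already recalled in the paragraph preceding the proposition. And (4) $\Rightarrow$ (1) is immediate: uniform convergence of $\frac{1}{N}\sum_{n=0}^{N-1}f(T^n x)z(n)$ to zero at every $x\in X$ forces pointwise convergence at every $x$, which is precisely the condition $\GSo$.

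Next I would prove (2) $\Rightarrow$ (1). Given a zero-entropy system $(X,T)$, a point $x\in X$ and $f\in C(X)$, set $b_k:=k^2$ (so that $b_{k+1}-b_k\to\infty$ while $(b_{k+1}-b_k)/b_k\to 0$) and $x_k:=T^{b_k}x$. Then $f(T^{n-b_k}x_k)=f(T^n x)$, and the double sum in the definition of orthogonality on moving orbits telescopes to $\frac{1}{b_K}\sum_{n=0}^{b_K-1}f(T^n x)z(n)$. Hypothesis (2) thus yields Ces\`aro convergence to zero along the subsequence $(b_K)$, and boundedness of $f$ and $z$ combined with $(b_{K+1}-b_K)/b_K\to 0$ lets one interpolate between consecutive $b_K$ to obtain convergence along every $N\to\infty$.

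The substantive step, and the main obstacle, is (1) $\Rightarrow$ (3). I would argue by contrapositive along the lines of \cite[Corollary 9]{AbaLemDe2017}. Assume (3) fails for some zero-entropy system $(X,T)$, an increasing sequence $(b_k)$ with $b_{k+1}-b_k\to\infty$, points $(x_k)\subset X$ and $f\in C(X)$. Concatenate the orbit segments $x_k, Tx_k,\dots, T^{b_{k+1}-b_k-1}x_k$ into a single sequence $y\in X^{\N}$, and pair it with the marker sequence $m\in\{0,1\}^{\N}$ defined by $m(n)=1$ if and only if $n\in\{b_k:k\ge 0\}$. Take $Y$ to be the orbit closure of $(y,m)$ in $(X\times\{0,1\})^{\N}$ under the shift. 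The growth condition $b_{k+1}-b_k\to\infty$ makes the marker density vanish and keeps the complexity of the marker factor subexponential, so $Y$ has zero topological entropy. Applying (1) to $Y$ with a continuous observable that reads off $f$ between consecutive markers would then produce the expected cancellation, contradicting the assumed failure of (3). The real difficulty is the verification that $Y$ has zero topological entropy and supports an observable correctly aligned with the moving orbits; the hypothesis $b_{k+1}-b_k\to\infty$ is precisely what makes both of these work.
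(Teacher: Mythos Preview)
Your approach coincides with the paper's: the paper does not supply its own argument but simply states that the proposition follows directly from Corollaries~9 and~10 of \cite{AbaLemDe2017}, which is exactly the route you take. Your proposal goes further only in that it unpacks the cycle of implications and sketches how the hard direction (1)$\Rightarrow$(3) runs in \cite{AbaLemDe2017}; this added detail is consistent with the cited source and with the paper's one-line justification.
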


\subsection{Chowla sequences}

Now we begin to show how to generalize Chowla property.
Let $(z(n))_{n\in \N}\in (S^1\cup \{0\})^{\mathbb{N}}$. For $m\in \N_{\ge 2}$, we denote by $$U(m)=\{e^{2\pi i \frac{k}{m}}: 0\le k\le m-1  \}$$ the set consisting of all $m$-th roots of the unity. We define supplementarily $U(\infty)=S^1$, which is considered as the set of points whose infinity norm is $1$. 
The \textit{index} of the sequence $(z(n))_{n\in \N}$ is defined by the smallest number $m\in \N_{\ge 2}\cup \{\infty \}$ such that the set 
$$\{n\in \N:z(n)\notin U(m)\cup \{0\} \} $$
has zero density in $\mathbb{N}$, that is,
$$
\lim\limits_{N\to +\infty} \frac{\sharp \{0\le n\le N-1: z(n) \not\in U(m)\cup\{0\} \}}{N}=0.
$$
We will show some examples to illustrate this definition.
\begin{exs}\label{EX: index of sequence}
	\begin{itemize}
		\item [(1)] Every sequence $(z(n))_{n\in \N}\in \{-1,0,1\}^{\mathbb{N}}$ has the index $2$.
		\item [(2)] For $\alpha\in \mathbb{R}\setminus \mathbb{Q}$, the sequence $(e^{2\pi i n\alpha})_{n\in \N}$ has the index $\infty$. In general, every sequence $(z(n))_{n\in \N}$ taking values in $S^1\setminus e(\mathbb{Q})$ has the index $\infty$ where $e(\mathbb{Q})$ denotes the set $\{e^{2\pi i a}: a\in \mathbb{Q} \}$.
		\item [(3)] For any non-integer number $\beta$, the sequence $(e^{2\pi i \beta^n})_{n\in \N}$ has the index $\infty$. In fact, if $\beta$ is an irrational number, then the sequence $(e^{2\pi i \beta^n})_{n\in \N}$ taking values in $S^1\setminus e(\mathbb{Q})$ and thus has the index $\infty$; if $\beta\in \mathbb{Q}\setminus \mathbb{Z}$, say $\beta=p/q$ with $q>1$ and ${gcd}(p,q)=1$, then $e^{2\pi i \beta^n}\in U(q^n)$ and $e^{2\pi i \beta^n}\not\in U(m)$ for any $m<q^n$ which implies that the index of the sequence $(e^{2\pi i \beta^n})_{n\in \N}$ is $\infty$.
	\end{itemize}
\end{exs}


The sequences $(S^1\cup \{0\})^{\mathbb{N}}$ are divided into two situations according to the cases when their indices are finite or infinite. It is natural that Chowla property is thus defined for these two cases separately. 

We first define the Chowla property for sequences of finite index. 
\begin{df}\label{(Ch) Sequences_finit}
	Suppose that a sequence $z\in (S^1\cup \{0\})^{\mathbb{N}}$ has the index $m\in \N_{\ge 2}$. We say that the sequence $z$ has \textbf{Chowla property} if 
	\begin{equation}\label{eq: Ch sequence_finit}
	\lim\limits_{N\to \infty}\frac{1}{N}\sum_{n=0}^{N-1} z^{i_0}(n+a_1)\cdot z^{i_1}(n+a_2)\cdot \dots \cdot z^{i_r}(n+a_r)=0,
	\end{equation}
	for each choice of $0\le a_1< \dots <a_r, r\ge 0, i_s\in\{1,2,\dots, m\}$ not all equal to $m$.
\end{df}
As it has already been noticed in Example \ref{EX: index of sequence} $(1)$,  a sequence taking values in $\{-1,0,1\}$ has the index $2$. It follows easily that a sequence taking values in $\{-1,0,1\}$ has Chowla property if and only if it satisfies the condition $\Ch$ defined in \cite{AbaKulLemDe2014}.

To define Chowla property for sequences of infinite index, we will need additionally the following. For a point $x\in S^1\cup \{0\}$ and $n\in \mathbb{Z}$, we define
$$
x^{(n)}:=
\begin{cases}
x^n &~\text{if}~x\in S^1,\\
0 &~\text{if}~x=0,
\end{cases}
$$
which is viewed as an extension of power functions on $S^1$.
It is easy to see that $(xy)^{(n)}=x^{(n)}y^{(n)}=y^{(n)}x^{(n)}$ for all $x,y\in  S^1\cup \{0\}$ and for all $n\in \Z$. Now we define the Chowla property for sequences of infinite index.

\begin{df}\label{(Ch) Sequences_infinit}
	Suppose that a sequence $z\in (S^1\cup \{0\})^{\mathbb{N}}$ has the index infinite. We say that the sequence $z$ has \textbf{Chowla property} if 
	\begin{equation}\label{eq: Ch sequence_infinit}
	\lim\limits_{N\to \infty}\frac{1}{N}\sum_{n=0}^{N-1} z^{(i_0)}(n+a_1)\cdot z^{(i_1)}(n+a_2)\cdot \dots \cdot z^{(i_r)}(n+a_r)=0,
	\end{equation}
	for each choice of $0\le a_1< \dots <a_r, r\ge 0, i_s\in\mathbb{Z}$ not all equal to $0$.
\end{df}

We revisit the functions $x\mapsto x^{(n)}$ for $n\in \mathbb{Z}$. For a fixed $m\in \N_{\ge2}$, it is easy to check that
$$
x^{(i)}=x^{(j)}~\text{for all}~x\in U(m)\cup\{0\}\Longleftrightarrow i\equiv j \mod{m}.
$$
We define 
$$
I(m):=\begin{cases}
\{0,1,\dots,m-1 \}, &~\text{if}~m\in \N_{\ge2}; \\
\mathbb{Z}, &~\text{if}~m=\infty,
\end{cases}
$$
which identifies the set of functions having the form $x\mapsto x^{(n)}$ for $n\in \Z$.
This leads us to combine Definition \ref{(Ch) Sequences_finit} and Definition \ref{(Ch) Sequences_infinit} together.



\begin{df}\label{(Ch) Sequences}
	Suppose that a sequence $z\in (S^1\cup \{0\})^{\mathbb{N}}$ has an index $m\in \N_{\ge 2}\cup \{\infty \}$. We say that the sequence $z$ has \textbf{Chowla property} if 
	\begin{equation}\label{eq: Ch sequence}
	\lim\limits_{N\to \infty}\frac{1}{N}\sum_{n=0}^{N-1} z^{(i_0)}(n+a_1)\cdot z^{(i_1)}(n+a_2)\cdot \dots \cdot z^{(i_r)}(n+a_r)=0,
	\end{equation}
	for each choice of $0\le a_1< \dots <a_r, r\ge 0, i_s\in I(m)$ not all equal to $0$.
\end{df}
It is not hard to check that Definition \ref{(Ch) Sequences} is an unification of Definition \ref{(Ch) Sequences_finit} and Definition \ref{(Ch) Sequences_infinit}. A sequence will be called a \textit{Chowla sequence} if it has Chowla property.


\section{Sequences $(e^{e\pi i \beta^n})_{n\in \N}$ for $\beta>1$}\label{Sec: examples}

In this section, we prove that the sequences $(e^{2\pi i \beta^n})_{n\in \N}$ are Chowla sequences for almost all $\beta>1$. Actually, we prove that a more general class of sequences are Chowla sequences. To this end, we study the distribution of a given sequence $(e^{2\pi i x(n)})_{n\in \N}$.


In fact, the problem about the distribution of a sequence $(e^{2\pi i x(n)})_{n\in \N}$ is actually the one about the distribution of the sequence $(\{x(n)\})_{n\in \N}$ on the unite interval $[0,1)$ where $\{x \}$ denotes the fractional part of the real number $x$. Recall that a sequence $(x(n))_{n\in \N}$ of real numbers is said to be {\em uniformly distributed modulo one} if, for every real numbers $u, v$ with $0\le u< v\le 1,$ we have
$$
\lim\limits_{N\to \infty} \frac{\sharp \{n:1\le n\le N, u\le \{x_n\}\le v  \}}{N}=v-u.
$$

For a given real number $\beta>1$, only few results are known on the distribution of the sequence $(\beta^n)_{n\in \N}$. For example, we still do not know whether $0$ is a limit point of $(\{e^n \})_{n\in \N}$, nor of $(\{(\frac{3}{2})^n\})_{n\in \N}$. However, several metric statements have been established. The first one was obtained by Koksma \cite{Kok1935}, who proved that for almost every $\beta>1$ the sequence $(\beta^n)_{n\in \N}$ is uniformly distributed modulo one on the unit interval $[0,1)$. Actually, Koksma \cite{Kok1935} proved the following theorem. 

\begin{thm}[Koksma's theorem]
	Let $(f_n)_{n\in \N}$ be a sequence of real-valued $C^1$ functions defined on an interval $[a,b]\subset \R$. Suppose that
	\begin{itemize}
		\item[(1)] $f_m'-f_n'$ is monotone on $[a,b]$ for all $m\not=n$;
		\item[(2)] $\inf_{m\not=n}\min_{x\in [a,b]} |f_m'(x)-f_n'(x)|>0$.
	\end{itemize}
	Then for almost every $x\in [a,b]$, the sequence $(f_n(x))_{n\in \N}$ is uniformly distributed modulo one on the unit interval $[0,1)$.
\end{thm}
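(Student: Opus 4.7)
The plan is to apply Weyl's criterion: $(f_n(x))_{n\in\N}$ is uniformly distributed modulo one if and only if, for every nonzero integer $h$, $\frac{1}{N} S_N(x) \to 0$, where
$$S_N(x) := \sum_{n=1}^{N} e^{2\pi i h f_n(x)}.$$
I will bound the $L^2([a,b])$ norm of $S_N/N$ with a favorable rate, upgrade to almost-everywhere convergence along the full sequence by a standard subsequence-plus-interpolation argument, and finish by a countable union over $h$.

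Expanding $\int_a^b|S_N|^2\,dx$ gives the diagonal contribution $N(b-a)$ plus off-diagonal integrals of the form $\int_a^b e^{2\pi i h(f_m - f_n)}\,dx$. Writing $g := f_m - f_n$, hypothesis~(1) says $g'$ is monotone on $[a,b]$ and hypothesis~(2) says $|g'| \geq c$ uniformly in $x$ and $m\neq n$. The second mean value theorem (equivalently, van der Corput's first-derivative test) then bounds each such integral by $C/(|h|\,\lambda_{m,n})$, where $\lambda_{m,n} := \min_{x \in [a,b]} |f_m'(x) - f_n'(x)|$. The crucial additional observation is that a continuous function bounded away from zero has constant sign, so $f_m' - f_n'$ has constant sign on $[a,b]$ for every $m \neq n$; this induces a total order on $\N$. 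For fixed $N$, let $\pi$ be the permutation of $\{1,\ldots,N\}$ that sorts these indices by this order; summing consecutive gaps (each $\geq c$ by (2)) yields
$$|f_m'(x) - f_n'(x)| \geq c\,|\pi^{-1}(m) - \pi^{-1}(n)| \quad \text{for all } x \in [a,b] \text{ and } 1\le m,n\le N.$$
A harmonic-sum estimate then gives $\sum_{m \neq n,\,1\le m,n\le N} 1/\lambda_{m,n} = O(N\log N / c)$, hence $\int_a^b |S_N(x)/N|^2\,dx = O((\log N)/N)$.

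For the upgrade to almost-sure convergence, take the subsequence $N_k = k^2$: the $L^2$ bounds $\int|S_{N_k}/N_k|^2\,dx = O((\log k)/k^2)$ are summable, so Fubini combined with Borel--Cantelli yields $S_{N_k}(x)/N_k \to 0$ for almost every $x \in [a,b]$. For general $N \in [N_k, N_{k+1})$ the trivial estimate $|S_N - S_{N_k}| \leq N_{k+1}-N_k = 2k+1$ gives $|S_N(x)|/N \leq |S_{N_k}(x)|/N_k + O(1/k) \to 0$ a.e. Taking a countable union of null sets over $h \in \Z\setminus\{0\}$ and applying Weyl's criterion completes the proof.

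The principal obstacle is the step extracting the polynomial growth $\lambda_{m,n} \geq c\,|\pi^{-1}(m) - \pi^{-1}(n)|$ from the apparently weak pointwise hypothesis~(2); without this amplification, the off-diagonal sum is only $O(N^2)$ and the variance estimate fails. Hypotheses (1) and (2) are both used in an essential way here: (2) guarantees that $f_m' - f_n'$ never vanishes (so has constant sign, defining the global order $\pi$), while (1) is what makes the van der Corput estimate available in the first place.
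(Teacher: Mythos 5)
Your proof is correct. Note, however, that the paper does not prove this statement at all: it is quoted as Koksma's classical metric theorem with a citation to Koksma (1935), and is then used as a black box in the proof of Theorem 4.3. What you have written is essentially the standard textbook proof (as in Kuipers--Niederreiter, Ch.~1, Thm.~4.3): Weyl's criterion, the $L^2$ variance estimate with the first-derivative (van der Corput / second mean value theorem) bound on the off-diagonal oscillatory integrals, the rearrangement argument, and the $N_k=k^2$ subsequence trick. The step you correctly identify as the crux --- that hypothesis (2) together with connectedness of $[a,b]$ forces each $f_m'-f_n'$ to have constant sign, so the indices carry a total order under which consecutive gaps telescope to give $\min_x|f_m'(x)-f_n'(x)|\ge c\,|\pi^{-1}(m)-\pi^{-1}(n)|$ and hence the harmonic bound $O(N\log N)$ on $\sum_{m\ne n}\lambda_{m,n}^{-1}$ --- is exactly the amplification on which the classical argument rests; without it one only gets $O(N^2)$ and no decay. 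The remaining steps (summability of $O(\log k/k^2)$, Borel--Cantelli, the trivial interpolation $|S_N-S_{N_k}|\le 2k+1$ for $N_k\le N<N_{k+1}$, and the countable union over $h\in\Z\setminus\{0\}$) are all sound, so the argument is complete.
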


Akiyama and Jiang \cite{AkiJia2016} proved that for almost every $\beta>1$ the sequence $(e^{2\pi i \beta^n})_{n\in \N}$ is fully oscillating. Actually, they proved the following theorem. 
\begin{thm}[\cite{AkiJia2016}, Corollary 2]\label{AJ Theorem}
	Given any real number $\alpha\not= 0$ and any $g\in C^2_+((1,\infty))$ where $C^2_+((1,+\infty))$ is the space of all positive 2-times continuously differentiable functions on $(1,+\infty)$ whose $i$-th derivative is non-negative for $i=1,2$. Then for almost every real number $\beta>1$ and for every real polynomial $P$, the sequence
	$$
	\left(e^{2\pi i (\alpha \beta^n g(\beta)+P(n))}\right)_{n\in \N}
	$$
	is fully oscillating. 
\end{thm}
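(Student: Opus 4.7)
The plan is to combine Weyl's criterion with iterated van der Corput differencing to collapse the uncountable family of polynomials $P$ to a countable family of difference-tuples, and then to invoke Koksma's theorem (already recorded above) on each.

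\emph{Step 1 (Weyl reduction).} First, I would apply Weyl's criterion: the sequence $(e^{2\pi i(\alpha\beta^n g(\beta)+P(n))})_{n\in\N}$ is fully oscillating if and only if, for every real polynomial $Q$ and every nonzero integer $k$,
\[
\frac{1}{N}\sum_{n=0}^{N-1} e^{2\pi i(k\alpha\beta^n g(\beta)+kP(n)+Q(n))}\xrightarrow[N\to\infty]{} 0.
\]
Absorbing $k$ into $\alpha$ and letting $R:=kP+Q$ range over all real polynomials, the theorem is equivalent to showing that, for a.e.\ $\beta>1$, the real-valued sequence $x_n(\beta):=\alpha\beta^n g(\beta)+R(n)$ is uniformly distributed modulo one for every real polynomial $R$.

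\emph{Step 2 (Iterated van der Corput).} Next I would iterate van der Corput's difference theorem. For any $h\ge 1$,
\[
\Delta_h x_n(\beta):=x_{n+h}(\beta)-x_n(\beta)=\alpha(\beta^h-1)\beta^n g(\beta)+\Delta_h R(n),
\]
where $\deg \Delta_h R=\deg R-1$. Iterating $d:=\deg R$ times gives
\[
\Delta_{h_1}\cdots\Delta_{h_d}x_n(\beta)=\alpha\,g(\beta)\prod_{i=1}^{d}(\beta^{h_i}-1)\,\beta^n+C(\beta;h_1,\ldots,h_d;R),
\]
with the additive term constant in $n$. Since translation preserves uniform distribution, it suffices to prove that, for a.e.\ $\beta>1$ and every $d\ge 0$ and every $(h_1,\ldots,h_d)\in\N_{\ge 1}^d$, the sequence $y_n^{(d;h_1,\ldots,h_d)}(\beta):=\alpha g(\beta)\prod_{i=1}^d(\beta^{h_i}-1)\beta^n$ is uniformly distributed modulo one. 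Crucially, the index set $\bigcup_{d\ge 0}\N_{\ge 1}^d$ is countable.

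\emph{Step 3 (Koksma plus countable exhaustion).} For each fixed tuple, I would apply Koksma's theorem. Write $\tilde g(\beta):=g(\beta)\prod_{i=1}^d(\beta^{h_i}-1)$. Each factor $\beta^{h_i}-1$ is strictly positive with non-negative first and second derivatives on $(1,\infty)$, and these properties are closed under multiplication, so $\tilde g>0$ and $\tilde g',\tilde g''\ge 0$ on $(1,\infty)$. Fix any compact interval $[a,b]\subset(1,\infty)$, assume without loss of generality $\alpha>0$, and set $f_n(\beta):=\alpha\tilde g(\beta)\beta^n$. A direct computation gives
\[
f_n''(\beta)=\alpha\beta^{n-2}\bigl(\beta^2\tilde g''(\beta)+2n\beta\,\tilde g'(\beta)+n(n-1)\tilde g(\beta)\bigr),
\]
so for $m>n\ge 0$ each coefficient in $f_m''-f_n''$ is non-negative on $[a,b]$, whence $f_m'-f_n'$ is monotone there, verifying Koksma's hypothesis (1). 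Evaluating $f_m'-f_n'$ at $\beta=a$ and using the bound $m a^{m-1}-n a^{n-1}\ge (m-n) a^{n-1}$ valid for $a>1$ and $m>n\ge 0$ gives
\[
(f_m'-f_n')(a)\ge \alpha\,\tilde g(a)(m-n)a^{n-1}\ge \alpha\,\tilde g(a)/a>0,
\]
uniformly in $m>n\ge 0$, which verifies hypothesis (2). Koksma's theorem then yields uniform distribution of $(f_n(\beta))_n$ for a.e.\ $\beta\in[a,b]$. Exhausting $(1,\infty)$ by countably many compact intervals $[a_j,b_j]$ and intersecting over the countable family of tuples produces a single full-measure set $E\subset(1,\infty)$ on which every required uniform distribution holds; Step 2 then propagates this to $(x_n(\beta))_n$ for every real polynomial $R$, and Step 1 closes the argument.

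\emph{Main obstacle.} The hard part is not any single estimate but the exchange of the quantifiers \emph{a.e.\ $\beta$} and \emph{every polynomial $P$}: Koksma's theorem (or an $L^2$ computation via van der Corput's first derivative test) yields one null exceptional set per polynomial, and an uncountable union of null sets can easily fill $(1,\infty)$. The role of iterated van der Corput differencing is precisely to replace the uncountable family of $R$'s by the countable family of tuples $(h_1,\ldots,h_d)$, after which the union of exceptional sets becomes countable and hence null.
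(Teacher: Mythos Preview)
The paper does not give its own proof of this theorem: it is quoted verbatim from Akiyama--Jiang \cite{AkiJia2016} and accompanied only by the remark that ``the main step of their proof is to show the uniform distribution modulo one of the sequence $(\alpha\beta^n g(\beta))_{n\in\N}$ \ldots\ by using Koksma's theorem.'' So there is nothing in the present paper to compare your argument against line by line.

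That said, your proposal is a correct proof and matches the strategy the paper attributes to Akiyama--Jiang: reduce to uniform distribution of sequences of the shape $\alpha\,\tilde g(\beta)\beta^n$ and verify Koksma's two hypotheses using the sign conditions on $g,g',g''$. Your observation that the class of positive $C^2$ functions with non-negative first and second derivatives is closed under multiplication is exactly what makes the passage from $g$ to $\tilde g(\beta)=g(\beta)\prod_i(\beta^{h_i}-1)$ painless, and your identification of iterated van der Corput differencing as the mechanism that collapses the uncountable family of polynomials to the countable family of difference-tuples is precisely the point.

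One small expository slip: in Step~1 you claim an ``if and only if'' between full oscillation and uniform distribution of $x_n(\beta)=\alpha\beta^n g(\beta)+R(n)$ for every $R$. Full oscillation of $(e^{2\pi i x_n})_n$ only requires the $k=1$ Weyl sum to vanish after adding any polynomial, whereas uniform distribution for every $R$ amounts to all $k\neq 0$ Weyl sums vanishing after adding any polynomial; the latter is strictly stronger. Since you prove the stronger statement, the argument is unaffected, but the word ``equivalent'' should be ``implied by''.
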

The main step of their proof is to show the uniform distribution modulo one of the sequence $(\alpha \beta^n g(\beta))_{n\in \N}$ for a given $\alpha$ and a function $g\in C^2_+((1,\infty))$, for almost every real number $\beta>1$ by using Koksma's theorem.

El Abdalaoui \cite{Aba2017} also studied such class of sequences and proved that these sequences satisfied (\ref{eq:(5)}): \\

{\em For any $\alpha\not=0$, for any any $g\in C^2_+((1,\infty))$, for any real polynomials $P$ and for any increasing sequence of integers $0=b_0<b_1<b_2<\cdots$ with $b_{k+1}-b_k\to \infty$ as $k$ tends to infinity, for almost all real numbers $\beta>1$, for any $\ell\not=0$, we have} 

\begin{equation}
\lim\limits_{K\to \infty}\frac{1}{b_{K}} \sum_{k=0}^{K} \left| \sum_{n=b_k}^{b_{k+1}-1} e^{2\pi i \ell (\alpha \beta^n g(\beta)+P(n)) }  \right|=0.
\end{equation}

\vspace{1mm}

Now we prove that for almost every $\beta>1$, the sequence $(e^{2\pi i \beta^n})_{n\in \N}$ is a Chowla sequence.
\begin{thm}\label{AJexample}
	Given any $g\in C^2_{count}$ where the space $C^2_{count}$ consists of all $C^2$ functions which have at most countable zeros on $(1,+\infty)$. Then for almost all real numbers $\beta>1$, the sequences 
	\begin{equation}\label{eq: AJ sequences}
	\left(e^{2\pi i \beta^n g(\beta)} \right)_{n\in \N}
	\end{equation}
	are Chowla sequences.
\end{thm}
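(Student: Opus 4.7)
\medskip
The plan is to reduce the Chowla condition to a Weyl exponential sum in the variable $\beta$ and then invoke Koksma's theorem, in the spirit of Akiyama--Jiang. It suffices to verify (\ref{eq: Ch sequence}) for every tuple with exponents in $\Z$ not all zero, since $I(m) \subset \Z$ for every admissible index $m$. Fix such $0 \le a_1 < \dots < a_r$ and $i_1, \dots, i_r \in \Z$ not all zero. Writing $z(n) = e^{2\pi i \beta^n g(\beta)}$, since the sequence lies on the unit circle one has
\begin{equation*}
    \prod_{s=1}^{r} z^{(i_s)}(n+a_s) \;=\; \exp\bigl(2\pi i\, h(\beta)\, \beta^n\bigr),
\end{equation*}
where $c(\beta) := \sum_{s=1}^{r} i_s \beta^{a_s}$ is a nonzero polynomial with integer coefficients and $h := c \cdot g$. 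Since there are only countably many such parameter tuples, it suffices (by a countable intersection of full-measure sets) to prove, for each fixed tuple, that $\frac{1}{N} \sum_{n=0}^{N-1} e^{2\pi i h(\beta) \beta^n} \to 0$ for almost every $\beta > 1$.

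I would next analyze the zero set of $h$. As the product of a nonzero polynomial (having finitely many real roots) with a function $g \in C^2_{count}$ (whose zeros form a countable set by hypothesis), $h$ is a $C^2$ function with at most countably many zeros in $(1, \infty)$. A continuous function whose zero set is countable cannot vanish identically on any open interval, so $\{\beta > 1 : h(\beta) \ne 0\}$ is open and dense and can therefore be written as a countable union of compact intervals $[a_k, b_k]$ on each of which $h$ has constant sign and $\inf_{[a_k, b_k]} |h| =: \epsilon_k > 0$.

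On each such interval $[a_k, b_k]$ I would apply Koksma's theorem to the family $f_n(\beta) = h(\beta) \beta^n$; by Weyl's criterion this gives uniform distribution modulo one of $(h(\beta)\beta^n)_n$ for a.e. $\beta \in [a_k, b_k]$, which in particular forces the target exponential sum to tend to zero. A direct computation yields
\begin{equation*}
    f_m'(\beta) - f_n'(\beta) \;=\; h'(\beta)(\beta^m - \beta^n) + h(\beta)\bigl(m \beta^{m-1} - n \beta^{n-1}\bigr),
\end{equation*}
and an analogous expression for $f_m'' - f_n''$. For $m \ne n$ with $\max(m,n)$ sufficiently large (depending on $\|h'\|_\infty$, $\|h''\|_\infty$, and $\epsilon_k$), the $h$-terms dominate and inherit the constant sign of $h$ on $[a_k, b_k]$, supplying both a uniform positive lower bound for $|f_m' - f_n'|$ and monotonicity. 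The finitely many remaining small-index pairs are immaterial because deleting finitely many initial terms of a sequence does not affect uniform distribution modulo one.

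Finally, the set of ``bad'' $\beta$ is a countable union --- over admissible parameter tuples and intervals $[a_k, b_k]$ --- of null sets, hence null; on its full-measure complement every Chowla sum vanishes, so (\ref{eq: AJ sequences}) is a Chowla sequence for almost every $\beta > 1$. The main technical obstacle is the careful verification of Koksma's two hypotheses uniformly in all sufficiently large pairs $m \ne n$: one must show that the leading $h$-contribution to $f_m' - f_n'$ genuinely dominates the $h'$-contribution on each $[a_k, b_k]$ even when $h'$ has opposite sign to $h$, and that the analogous domination in $f_m'' - f_n''$ delivers the required monotonicity.
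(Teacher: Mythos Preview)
Your proposal is correct and follows essentially the same route as the paper: reduce the Chowla product to $e^{2\pi i h(\beta)\beta^n}$ with $h=g\cdot c$, localize to compact intervals on which $h$ does not vanish, and verify Koksma's two hypotheses for $f_n(\beta)=h(\beta)\beta^n$ by showing that the dominant term $h(\beta)\cdot(\text{polynomial in }m,n,\beta)$ eventually controls both $f_m'-f_n'$ and $(f_m'-f_n')'$ once $\max(m,n)$ is large. The only organizational difference is that the paper observes $h\in C^2_{count}$ and thereby reduces to proving uniform distribution of $(\beta^{n+M}g(\beta))_n$ for an arbitrary $g\in C^2_{count}$ on intervals where $g\neq 0$, whereas you carry $h$ throughout; the derivative estimates and the handling of finitely many small indices are the same.
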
 
\begin{proof}
	Since the function $g$ has at most countable zeros, for any $\epsilon>0$ there is a union of closed intervals $\sqcup_{i\in I} J_i^{\epsilon}\subset (1, +\infty)$ such that 
	\begin{itemize}
		\item [(1)] the index set $I$ is at most countable,
		\item  [(2)] $J_i^{\epsilon}$ is a closed interval where $g$ does not vanish,
		\item [(3)] $m((1, +\infty)\setminus\ \sqcup_{i\in I} J_i^{\epsilon} )<\epsilon$ where $m$ is the Lebesgue measure on the real line.
	\end{itemize}
	Note that if we have proved that for any $\epsilon>0$, the sequence (\ref{eq: AJ sequences}) has Chowla property for almost every $\beta\subset J_i^{\epsilon}$, then it has Chowla property for for almost every $\beta\subset (1, +\infty)$. Thus it suffices to prove that for any closed interval $J\subset (1, +\infty)$ where $g$ does not vanish, the sequence (\ref{eq: AJ sequences}) has Chowla property for almost every $\beta\subset J$.

	Let $c_\beta(n)=e^{2 \pi i \beta^{n} g(\beta)}$. For each choice of $0\le a_1< \dots <a_r, r\ge 0, i_s\in \mathbb{Z}$ not all equal to $0$, we have that 
	$$
	c_\beta^{(i_0)}(n+a_1)\cdot c_\beta^{(i_1)}(n+a_2)\cdot \dots \cdot c_\beta^{(i_r)}(n+a_r)=e^{2\pi i \beta^n h(\beta)},
	$$
	where $h(\beta)=g(\beta)\sum_{j=1}^{r} i_j \beta^{a_j}$.
	Note that $\sum_{j=1}^{r} i_j x^{a_j}$ is a non-trivial real polynomial because not all $i_s$ are equal to zero for $1\le s\le r$. It follows that the function $h$ is also in the space $C^2_{count}$.
	Thus it is sufficient to prove that for any $g\in C^2_{count}$, for any closed interval $J\subset (1, +\infty)$ where $g$ does not vanish, for almost every $\beta\in J$, and for large enough $M$, the sequence $(\beta^{n+M}g(\beta))_{n\in \N}$ is uniformly distributed modulo $1$.


	Denote by $f_n(x)=x^n g(x)$. A simple calculation allows us to see that
	\begin{equation}
	(f_m(x)-f_n(x))'=\left( (mx^{m-1}-nx^{n-1})g(x)+(x^m-x^n)g'(x)  \right),
	\end{equation}
	for all $m,n\in \mathbb{N}$.
	For given $m>n$,  we deduce that for all $x>1$, 
	\begin{equation}\label{(f_m(x)-f_n(x))'}
	\begin{split}
	\left|\frac{(f_m(x)-f_n(x))'}{x^mg(x)}\right|=&\left|\frac{m}{x}-\frac{n}{x^{m-n+1}}+(1-\frac{1}{x^{m-n}})\frac{g'(x)}{g(x)}\right|\\
	\ge & \left( \frac{m}{x}-\frac{n}{x^{m-n+1}}-(1-\frac{1}{x^{m-n}})\left|\frac{g'(x)}{g(x)}\right|  \right)\\
	\ge  & \left( \frac{m(x-1)}{x^{2}}-\left|\frac{g'(x)}{g(x)}\right| \right).
	\end{split}
	\end{equation}
	Since $J\subset (1, +\infty)$ is a closed interval, the function $|\frac{g'(x)}{g(x)}|$ is bounded and the function $\frac{(x-1)}{x^{2}}$ has a positive lower bound on $J$. Hence the right-hand side of $(\ref{(f_m(x)-f_n(x))'})$ tends to $+\infty$ when $m$ tends to $+\infty$. It follows that there is $M_1>0$ such that
	\begin{equation}
	\begin{split}
	\inf_{m\not=n\ge M_1} \min_{x\in J_i}|(f_m(x)-f_n(x))'|>0.
	\end{split}
	\end{equation}
	
	On the other hand, we have
	\begin{equation}
	\begin{split}
	(f_m'(x)-f_n'(x))'=&( (m(m-1)x^{m-2}-n(n-1)x^{n-2})g(x)\\
	&+2(mx^{m-1}-nx^{n-1})g'(x)+(x^m-x^n)g''(x)  ).
	\end{split}
	\end{equation}
	Similar to the calculation $(\ref{(f_m(x)-f_n(x))'})$, we have
	\begin{equation}\label{(f_m'(x)-f_n'(x))'}
	\begin{split}
	&\left|\frac{(f_m'(x)-f_n'(x))'}{x^mg(x)}\right|\\
	\ge  & \left( \frac{m(m-1)(x-1)}{x^3}
	-\frac{2m}{x}\left|\frac{g'(x)}{g(x)}\right|-\left|\frac{g''(x)}{g(x)}\right| \right).
	\end{split}
	\end{equation}
	Since the functions $\frac{1}{x}|\frac{g'(x)}{g(x)}|$ and $\left|\frac{g''(x)}{g(x)}\right|$ are bounded on $J$ and the function $\frac{x-1}{x^3}$ has a positive lower bound on $J$, the right-hand side of $(\ref{(f_m'(x)-f_n'(x))'})$ tends to $+\infty$ when $m$ tends to $+\infty$. It follows that there is $M_2>0$ such that
	\begin{equation}\label{|(f_m'(x)-f_n'(x))'|>0.}
	\begin{split}
	\inf_{m\not=n\ge M_2} \min_{x\in J_i}|(f_m'(x)-f_n'(x))'|>0.
	\end{split}
	\end{equation}
	This implies that $f_m'(x)-f_n'(x)$ is monotone on $J$ for $m\not=n\ge M_2$. By Koksma's theorem, for $M=\max\{M_1, M_2 \}$, the sequence $(\beta^{n+M} g(\beta))_{n\in \N} $ is uniformly distributed modulo one for almost every $\beta \in J$. This completes the proof.
	
\end{proof}


Putting $g\equiv 1$ in Theorem \ref{AJexample}, we obtain that the sequences $(e^{2\pi i \beta^n})_{n\in \N}$ are Chowla sequences for almost all $\beta>1$. By Theorem \ref{Ch sequences are S sequences} in section \ref{Sec: Ch implies S}, the sequences $(e^{2\pi i \beta^n})_{n\in \mathbb{N}}$ are orthogonal to all dynamical systems of zero entropy for almost all $\beta>1$. 
Moreover, by Proposition \ref{prop:equivalent definitions},  the sequences $(e^{2\pi i \beta^n})_{n\in \mathbb{N}}$ are strongly orthogonal on moving orbits to all dynamical systems of zero entropy for almost all $\beta>1$. On the other hand, by Corollary \ref{cor:Not SOMO positive entropy} in Section \ref{Sec: Ch implies S}, these Chowla sequences $(e^{2\pi i \beta^n})_{n\in \mathbb{N}}$ are not strongly orthogonal on moving orbits to any dynamical systems of positive entropy.

It is clear that a Chowla sequence (or even an oscillating sequence of order $1$) is aperiodic (see the definition of ``aperiodic" in Page 1, \cite{DowSer2016}). By Theorem 3.1 in \cite{DowSer2016}, we obtain that for almost every $\beta>1$, there exists a sequence of dynamical systems $(X_n, T_n)$ with entropy $h(T_n)\to \infty$ such that  the sequence $(e^{2\pi i \beta^n})_{n\in \N}$ is orthogonal to $(X_n, T_n)$ for all $n\in \N$

We remark that if $\beta$ is an algebraic number, then the sequence $(e^{2\pi i \beta^n})_{n\in \N}$ is not a Chowla sequence. In fact, since $\beta$ is an algebraic number, there exist $r$ different non-negative integers $a_1, a_2, \dots, a_r$ and $r$ integers $i_1, i_2, \dots, i_r$ not all equal to $0$ such that 
\begin{equation}\label{eq: algebraic number}
i_1\beta^{a_1}+i_2\beta^{a_2}+\dots+i_r\beta^{a_r}=0.
\end{equation}
Denote by $z(n)=e^{2\pi i \beta^n}$. Due to (\ref{eq: algebraic number}), we obtain
$$
z(n+a_1)^{i_1}z(n+a_2)^{i_2}\cdots z(n+a_2)^{i_2}=e^{2\pi i \beta^n(i_1\beta^{a_1}+i_2\beta^{a_2}+\dots+i_r\beta^{a_r})}=1.
$$
It follows that
\begin{equation}\label{eq: limit algebraic number}
\lim\limits_{N\to +\infty} \frac{1}{N}\sum_{n=0}^{N-1}z(n+a_1)^{i_1}z(n+a_2)^{i_2}\dots z(n+a_2)^{i_2}=1.
\end{equation}
But to have Chowla property requires the limit (\ref{eq: limit algebraic number}) equal to $0$. Therefore, the sequence $(e^{2\pi i \beta^n})_{n\in \N}$ is not a Chowla sequence for any algebraic number $\beta$. 



\section{Chowla property implies Sarnak property}\label{Sec: Ch implies S}


In this section, we show that Chowla property implies Sarnak property. The main theorem of this section is the following.
\begin{thm}\label{Ch sequences are S sequences}
	Chowla sequences are always orthogonal to all topological dynamical systems of zero entropy.
\end{thm}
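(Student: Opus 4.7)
The plan is to adapt the joinings-based strategy of El Abdalaoui, Kulaga-Przymus, Lemanczyk and de La Rue \cite{AbaKulLemDe2014} to the present setting of sequences valued in $(S^1\cup\{0\})^\N$. Fix a topological dynamical system $(X,T)$ with $h(T)=0$, a point $x\in X$, and $f\in C(X)$. Writing $Y:=(S^1\cup\{0\})^\N$ and $F\colon Y\to S^1\cup\{0\}$ for the projection onto the zeroth coordinate, the Cesaro average $\frac{1}{N}\sum_{n=0}^{N-1}f(T^n x)z(n)$ is exactly the ergodic average of $f\otimes F$ along the $(T\times S)$-orbit of $(x,z)$. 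Passing to a subsequence $(N_k)$, we may assume this orbit is quasi-generic for a $(T\times S)$-invariant measure $\rho$ on $X\times Y$; the problem then reduces to showing $\int f\otimes F\,d\rho=0$. Let $\mu$ and $\nu$ denote the marginals of $\rho$ on $X$ and $Y$; the variational principle gives $h(T,\mu)=0$, and $\nu\in Q\text{-}gen(z)$.

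The heart of the argument is to analyse $\nu$. Let $\pi\colon Y\to\{0,1\}^\N$ be the zero-pattern map $\pi(y)(n):=\mathbf{1}_{y(n)\ne 0}$ and set $\bar\nu:=\pi_*\nu$. The key claim is that, for $\bar\nu$-a.e.\ $w$, the conditional measure of $\nu$ on the fibre $\pi^{-1}(w)$ is the product measure whose $n$-th marginal is $\delta_0$ when $w(n)=0$ and the uniform measure on $U(m)$ (resp.\ on $S^1$) when $w(n)=1$, where $m\in\N_{\ge 2}\cup\{\infty\}$ is the index of $z$. In particular, $\pi$ is a relatively Bernoulli, hence relatively Kolmogorov, factor map. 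The claim is a direct translation of Chowla property into a statement about $\nu$: the quasi-generic convergence turns the vanishing identities~(\ref{eq: Ch sequence}) into
\begin{equation*}
\int F^{(i_0)}\circ S^{a_1}\cdot\dots\cdot F^{(i_r)}\circ S^{a_r}\,d\nu=0
\end{equation*}
for every $0\le a_1<\cdots<a_r$ and every non-trivial $(i_s)\in I(m)^r$. Since $F^{(0)}=\mathbf{1}_{y(0)\ne 0}$ is measurable with respect to $\bar\nu$ and the non-constant functions $F^{(i)}$ (for $i\not\equiv 0\bmod m$) run through the non-trivial characters of $U(m)$ or of $S^1$, a standard Fourier argument identifies this system of moment equations with the asserted relatively independent uniform extension.

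Now we invoke Lemma~\ref{Relatively Independent}. The joining system $(X\times Y,T\times S,\rho)$ admits two factor maps onto $(\{0,1\}^\N,S,\bar\nu)$: one through $Y$ via $\pi\circ\pi_Y$, which is relatively Kolmogorov by the preceding claim; and one onto the intermediate system $(X\times\{0,1\}^\N,T\times S,\tilde\rho)$ with $\tilde\rho:=(\mathrm{id}_X\times\pi)_*\rho$, whose relative entropy over $\bar\nu$ is at most $h(T,\mu)=0$ by the standard entropy bound for joinings. Lemma~\ref{Relatively Independent} forces $\rho$ to be the relatively independent joining of $\tilde\rho$ and $\nu$ over $\bar\nu$, so that
\begin{equation*}
\int f\otimes F\,d\rho=\int\mathbb{E}_{\tilde\rho}\!\left[f\otimes 1\,\big|\,\bar\nu\right]\!(w)\cdot\mathbb{E}_\nu\!\left[F\,\big|\,\bar\nu\right]\!(w)\,d\bar\nu(w).
\end{equation*}
By the structure of $\nu$ just described, $\mathbb{E}_\nu[F\mid\bar\nu](w)$ equals $0$ when $w(0)=0$ and equals the mean of the uniform measure on $U(m)$ or on $S^1$ when $w(0)=1$; in both cases $m\ge 2$ makes this mean vanish. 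Therefore the integral is $0$, as required.

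The main obstacle lies in the claim of the second paragraph, namely that the Chowla-type vanishing of all non-trivial moments of coordinate functions under $\nu$ is equivalent to $\pi$ being the relatively independent uniform extension of $\bar\nu$. In the finite-index case this reduces to a character computation on $U(m)^r$; in the infinite-index case one approximates continuous functions on $(S^1)^r$ by trigonometric polynomials and must verify a suitable uniform continuity/density argument to pass from the vanishing of monomial correlations to factorisation on general cylinder sets. Once this is in hand, the remainder of the proof is the clean joining argument sketched above.
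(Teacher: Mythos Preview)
Your proposal is correct and follows essentially the same route as the paper. The paper's proof likewise passes to a quasi-generic measure $\rho$ for $(x,z)$, identifies the marginal on $(U(m)\cup\{0\})^\N$ as a measure $\widehat{\nu}$ obtained by relatively independent uniform extension of a base measure $\nu$ on $\{0,1\}^\N$ (this is exactly your ``key claim'', packaged as Proposition~\ref{Equvenlent Of Ch} via Lemma~\ref{Week convegence Lemma} and the Stone--Weierstrass Lemma~\ref{lem: main observation}), shows that the factor $\pi$ is relatively Kolmogorov (Proposition~\ref{FirstDiagram}), applies Lemma~\ref{Relatively Independent} against a zero-relative-entropy factor containing $X$, and concludes via $\mathbb{E}^{\widehat{\nu}}[F\mid\pi^{-1}(\mathcal{B})]=0$ (Lemma~\ref{E=0}). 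The only cosmetic differences are that the paper phrases the zero-entropy side using a completely deterministic point (proving the formally stronger condition $\GS$) rather than invoking the variational principle for $h(T)=0$, and organises the intermediate factor as $(X,T,(P_1)_*\rho)\vee(\{0,1\}^\N,S,\nu)$ rather than your $(X\times\{0,1\}^\N,\tilde\rho)$; these carry the same information.
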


Before we really get into the proof, let us remark that the idea of the proof of Theorem \ref{Ch sequences are S sequences} is from the proof of Theorem 4.10 in \cite{AbaKulLemDe2014}: the condition $\Ch$ implies the condition $\S$. Our main observation is the following lemma, which follows directly from the Stone-Weierstrass theorem. 
\begin{lem}\label{lem: main observation}
	The linear subspace generated by the constants and the family 
	$$\mathcal{F}:=\{F^{(i_1)}\circ S^{a_1}\cdots F^{(i_r)}\circ S^{a_r}: 0\le a_1< \dots <a_r, r\ge 0, i_s\in I(m)  \}$$
	of continuous functions is a complex algebra (closed under taking products and conjugation) of functions separating points, hence it is dense in $C((U(m)\cup \{0\})^\N)$ for every $m\in \mathbb{N}_{\ge 2}\cup \{\infty\}$.
\end{lem}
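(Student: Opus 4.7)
The plan is to apply the complex Stone-Weierstrass theorem to the compact Hausdorff space $X_m:=(U(m)\cup\{0\})^{\N}$. Writing $\mathcal{A}_m$ for the linear subspace spanned by $\mathcal{F}$ together with the constants, I have to verify that $\mathcal{A}_m$ is a unital self-adjoint point-separating subalgebra of $C(X_m)$; density will then be immediate.

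Closure under multiplication and conjugation both rest on two elementary coordinate-wise identities for $x\in U(m)\cup\{0\}$: namely $x^{(i)}x^{(j)}=x^{(i+j)}$ and $\overline{x^{(i)}}=x^{(-i)}$. Each holds by case-checking: if $x\in S^1$ both sides agree by ordinary exponent laws, and if $x=0$ both sides vanish thanks to the convention $0^{(k)}=0$. Applied position by position, these identities let me rewrite the product (resp.\ complex conjugate) of two generators
\[
F^{(i_1)}\circ S^{a_1}\cdots F^{(i_r)}\circ S^{a_r}, \qquad F^{(j_1)}\circ S^{b_1}\cdots F^{(j_s)}\circ S^{b_s}
\]
as a single generator of the same form, by merging exponents at any shift that appears in both and (when $m$ is finite) reducing the resulting exponent modulo $m$, which is permissible since $z^{(i)}=z^{(j)}$ on $U(m)\cup\{0\}$ whenever $i\equiv j\pmod m$. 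By bilinearity this yields closure of $\mathcal{A}_m$ under products and conjugation, while the constants lie in $\mathcal{A}_m$ by construction.

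For point separation, suppose $x\neq y$ in $X_m$ and choose $n\in\N$ with $x(n)\neq y(n)$. The generator $F^{(1)}\circ S^n\in\mathcal{F}$ sends $x$ to $x(n)$ and $y$ to $y(n)$, because $z^{(1)}=z$ for every $z\in U(m)\cup\{0\}$ under the inclusion into $\mathbb{C}$; since $x(n)$ and $y(n)$ are distinct complex numbers, the generator separates $x$ and $y$. Applying Stone-Weierstrass in its complex form then yields the density of $\mathcal{A}_m$ in $C(X_m)$, uniformly in $m\in\N_{\ge 2}\cup\{\infty\}$.

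There is no serious obstacle in the argument; the one point requiring care is bookkeeping under the convention $0^{(k)}=0$. Concretely, when two generators have combined exponent $0$ at some common shift, the factor $F^{(0)}$ at that coordinate must be retained rather than discarded, because $F^{(0)}$ is \emph{not} the constant $1$ but the indicator that the coordinate is nonzero. Once this is kept straight, both the product computation and the merging of generators go through cleanly.
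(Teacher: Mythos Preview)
Your proof is correct and follows exactly the approach the paper indicates: the paper states that the lemma ``follows directly from the Stone--Weierstrass theorem'' without spelling out the verification, and you have supplied the routine checks (closure under products and conjugation via the identities $x^{(i)}x^{(j)}=x^{(i+j)}$ and $\overline{x^{(i)}}=x^{(-i)}$, point separation via $F^{(1)}\circ S^n$). Your remark that $F^{(0)}$ must be retained rather than dropped is a useful clarification the paper leaves implicit.
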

The family $\mathcal{F}$ is an analogue of the family of $\mathcal{A}$ in Lemma 4.6 of \cite{AbaKulLemDe2014}. The difference is that $\mathcal{F}$ is a complex algebra but $\mathcal{A}$ is real. 

Now we provide a detailed proof of Theorem \ref{Ch sequences are S sequences}. It is well known that for every $m\in \N_{\ge 2}\cup\{\infty \}$, the system $(U(m)^\mathbb{N}, \mathcal{B}, S, \mu_m^{\otimes\mathbb{N}})$ is a Bernoulli system for every $m\in \N$, where $\mu_m$ is the equidistributed probability measure on $U(m)$ and $\mu_m^{\otimes\mathbb{N}}$ is the product measure. Since a Bernoulli system is a Kolmogorov system, the following  is classical.
\begin{prop}\label{K system}
	For any $m\in \N_{\ge 2}\cup \{\infty\}$, the system $(U(m)^\mathbb{N}, S, \mu_m^{\otimes\mathbb{N}})$ is a Kolmogorov system.
\end{prop}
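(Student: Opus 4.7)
My plan is to identify the system as a Bernoulli shift and then invoke the classical fact that Bernoulli shifts are Kolmogorov, in the sense used by the paper (every non-trivial factor has positive entropy).

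First, I would observe that $(U(m)^{\N}, S, \mu_m^{\otimes \N})$ is a Bernoulli shift on the base space $(U(m), \mu_m)$: writing $\pi_k: U(m)^{\N} \to U(m)$ for the $k$-th coordinate projection, the sequence $(\pi_k)_{k \in \N}$ is i.i.d.\ under $\mu_m^{\otimes \N}$, each distributed according to $\mu_m$. For $m \in \N_{\ge 2}$, $\mu_m$ is the uniform measure on the $m$-point set $U(m)$; for $m = \infty$, $\mu_\infty$ is normalized Haar measure on $S^1$. In both cases the base is a standard probability space, so the classical Bernoulli shift theory applies.

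Next, I would pass through the Pinsker $\sigma$-algebra $\Pi(S) \subset \mathcal{B}$, i.e.\ the largest $S$-invariant sub-$\sigma$-algebra on which $S$ has zero entropy. By Rokhlin--Sinai theory on Lebesgue spaces, $\Pi(S)$ corresponds exactly to the union of all zero-entropy factors, so the system is Kolmogorov in the paper's sense if and only if $\Pi(S)$ is trivial modulo $\mu_m^{\otimes \N}$. On the other hand, for an i.i.d.\ sequence $\Pi(S)$ is contained in the tail $\sigma$-algebra
\[
\mathcal{T} := \bigcap_{n \ge 0} \sigma(\pi_k : k \ge n),
\]
and Kolmogorov's zero-one law gives $\mathcal{T} = \{\emptyset, U(m)^{\N}\}$ mod $\mu_m^{\otimes \N}$. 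Hence $\Pi(S)$ is trivial, and every non-trivial factor has strictly positive entropy.

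The only point that requires mild care is the case $m = \infty$, where the alphabet is uncountable: one needs to be sure that the inclusion $\Pi(S) \subset \mathcal{T}$ and the characterization of Kolmogorov systems via triviality of the Pinsker algebra hold in this generality. Both are standard facts in Kolmogorov--Sinai theory for Lebesgue probability spaces, so I would simply cite a standard reference (e.g.\ Rokhlin's lectures on entropy, or Walters' textbook) rather than reproduce them. There is no genuine obstacle here, which is why the paper labels the statement as classical.
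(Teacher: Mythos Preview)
Your proposal is correct and matches the paper's approach exactly: the paper does not give a formal proof but simply states, in the sentence preceding the proposition, that the system is a Bernoulli system and that ``a Bernoulli system is a Kolmogorov system,'' labeling the result as classical. Your write-up merely unpacks this classical implication via the Pinsker $\sigma$-algebra and Kolmogorov's zero--one law, which is precisely the standard argument one would cite.
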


For any $m\in \N_{\ge 2}\cup \{\infty\}$ and for any $S$-invariant measure $\nu$ on $(\{0,1\}^\mathbb{N}, S)$, we define the diagram
\begin{equation}\label{Dig: first}
\xymatrix{
	(U(m)^\mathbb{N}\times \{0,1\}^\mathbb{N}, S, \mu_m^{\otimes\mathbb{N}}\otimes\nu) \ar[d]_{\rho}  \ar@/^6pc/[dd]^{P_2}   &  \\
	((U(m)\cup\{0\})^\mathbb{N}, S, \widehat{\nu})\ar[d]_{\pi} & \\	
	(\{0,1\}^\mathbb{N}, S, \nu) &
}
\end{equation}
where $\rho: (x_n,y_n)_{n\in \N}\mapsto (x_ny_n)_{n\in \N}$, $\pi: (x_n)_{n\in \N}\mapsto (x_n^{(0)})_{n\in \N}$, $P_2: (x_n,y_n)_{n\in \N}\mapsto (y_n)_{n\in \N}$ and $\widehat{\nu}=\rho_*(\mu_m^\mathbb{N}\otimes\nu)$. We study  this diagram in the following proposition.

\begin{prop}\label{FirstDiagram}
	The diagram (\ref{Dig: first}) satisfies the following properties.
	\begin{itemize}
		\item [(1)] The diagram (\ref{Dig: first}) commutes.
		\item [(2)] The factor map $\pi$ is either trivial or relatively Kolmogorov.
		\item [(3)] The measure $\widehat{\nu}$ has the form:
		$$
		\widehat{\nu}(B)=\nu(\pi(B))\prod_{i=0,b_i\not=0}^{k-1}\mu_m(b_i), 
		$$
		for any cylinder $B=[b_0, b_1, \cdots, b_{k-1}]$ in $(U(m)\cup\{0\})^\mathbb{N}$.
	\end{itemize}
\end{prop}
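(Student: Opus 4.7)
The plan is to prove the three claims in order, with (1) and (3) being direct computations while (2) is the technical obstacle. The formula derived in (3) is the key input for (2), and the Kolmogorov property of the Bernoulli system $(U(m)^\N, \mu_m^{\otimes\N})$ from Proposition \ref{K system} is what ultimately yields the relative Kolmogorov property of $\pi$. For (1), I would verify commutativity coordinate-wise. Given $(x, y) \in U(m)^\N \times \{0,1\}^\N$, the point $\rho(x, y)$ has $n$-th coordinate $x_n y_n$; applying $\pi$ yields $(x_n y_n)^{(0)}$, which equals $0$ when $y_n = 0$ and $1$ when $y_n = 1$ (since $x_n \in U(m) \subset S^1$), matching $P_2(x, y)_n = y_n$. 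The maps $\rho$, $\pi$, $P_2$ are $S$-equivariant by their coordinate-wise definitions, and $\rho_*(\mu_m^{\otimes\N} \otimes \nu) = \widehat{\nu}$ is the definition of $\widehat\nu$, while $\pi_*\widehat{\nu} = \nu$ follows from $\pi \circ \rho = P_2$.

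For (3), I would analyze $\rho^{-1}(B)$ for a cylinder $B = [b_0, \ldots, b_{k-1}]$ in $(U(m) \cup \{0\})^\N$. The condition $x_i y_i = b_i$ splits per coordinate: if $b_i = 0$ then $y_i = 0$ with $x_i \in U(m)$ unconstrained (as $U(m) \subset S^1$ avoids $0$); if $b_i \neq 0$ then $y_i = 1$ and $x_i = b_i$. Using the product structure of $\mu_m^{\otimes\N} \otimes \nu$, the measure factors into a $y$-contribution $\nu([\mathbf{1}_{b_0 \neq 0}, \ldots, \mathbf{1}_{b_{k-1}\neq 0}]) = \nu(\pi(B))$ and an $x$-contribution $\prod_{i<k,\ b_i \neq 0} \mu_m(\{b_i\})$, matching the claimed formula exactly.

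The main obstacle is (2). If $\nu$ is the Dirac mass at the zero sequence $(0,0,\ldots)$, then formula (3) forces $\widehat{\nu}$ to be Dirac at the zero sequence as well, and $\pi$ is trivial. Otherwise, formula (3) exhibits $\pi$ as a relatively Bernoulli extension: disintegrating $\widehat{\nu}$ over $\pi$, the conditional measure at $z \in \{0,1\}^\N$ is supported on sequences with support exactly $\{n : z_n = 1\}$, and on that support the coordinates are iid uniform on $U(m)$. Since $(U(m)^\N, S, \mu_m^{\otimes\N})$ is Kolmogorov by Proposition \ref{K system}, this iid fiber structure should transfer relatively to $\pi$: any intermediate factor strictly containing $\pi^{-1}(\mathcal{B})$ must resolve nontrivial information from the iid $U(m)$-valued fibers and hence carries positive relative entropy. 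Making this rigorous is the crux of (2); I would proceed in the spirit of the proof of Theorem 4.10 of \cite{AbaKulLemDe2014}, using the explicit product disintegration to reduce the relative Kolmogorov property of $\pi$ to the absolute Kolmogorov property of the fiber Bernoulli system.
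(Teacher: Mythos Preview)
Your arguments for (1) and (3) are essentially the paper's own. For (2), however, you are working much harder than necessary, and the paper's route is considerably shorter.

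The paper does not analyze the fiber structure of $\pi$ at all. Instead it observes that the commutativity established in (1), namely $P_2=\pi\circ\rho$, already exhibits $((U(m)\cup\{0\})^{\mathbb N},S,\widehat\nu)$ as an intermediate factor between $(U(m)^{\mathbb N}\times\{0,1\}^{\mathbb N},S\times S,\mu_m^{\otimes\mathbb N}\otimes\nu)$ and $(\{0,1\}^{\mathbb N},S,\nu)$. Now $P_2$ is the second-coordinate projection of a direct product, with fiber the Kolmogorov system $(U(m)^{\mathbb N},S,\mu_m^{\otimes\mathbb N})$ from Proposition~\ref{K system}; this makes $P_2$ relatively Kolmogorov. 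But then \emph{any} intermediate factor of $P_2$ over the base is automatically either trivial or relatively Kolmogorov, simply because every intermediate factor of $\pi$ is also an intermediate factor of $P_2$. This is immediate from the definition of ``relatively Kolmogorov'' and requires no disintegration.

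Your fiberwise approach could in principle be made to work, but it duplicates effort: you would effectively be reproving that the product projection $P_2$ is relatively Kolmogorov, and then still need the passage to the intermediate factor $\pi$. The paper's argument gets both at once by using the tower $P_2=\pi\circ\rho$ directly. Note also that (3) is not actually needed as input for (2), contrary to what you say in your plan; the paper proves (2) from (1) alone.
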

\begin{proof}
	To prove that the diagram (\ref{Dig: first}) commutes, it suffices to prove $P_2=\pi\circ \rho$. In fact, we have
	\begin{align*}
	\pi\circ \rho((x(n),y(n))_{n\in \N})
	=\pi ((x(n)y(n))_{n\in \N})
	=(x(n)^{(0)}y(n)^{(0)})_{n\in \N}=(y(n))_{n\in \N},
	\end{align*}
	because of the facts that $x^{(0)}=1$ for all $x\in U(m)$ and $y^{(0)}=y$ for all $y\in \{0,1\}$. This completes the proof of $(1)$.
	
	By Proposition \ref{K system}, the factor $P_2$ is relatively Kolmogorov. Since the factor map $\pi$ is an intermediate factor by (1), it is either trivial or relatively Kolmogorov. This implies $(2)$.
	
	It remains to show the form of $\widehat{\nu}$. For any cylinder $B=[b_0, b_1, \cdots, b_{k-1}]$ in $(U(m)\cup\{0\})^\mathbb{N}$, we observe that $\rho^{-1}(B)$ is the cylinder 
	$$[B_0, B_1, \cdots, B_{k-1}] \times [b_0^{(0)}, b_1^{(0)}, \cdots, b_{k-1}^{(0)}],$$
	where $B_i=\{b_i\}$ if $b_i\not=0$; and $B_i=U(m)$ if $b_i=0$. Since the support of the probability measure $\mu_m$ is $U(m)$, we obtain
	\begin{align*}
	\widehat{\nu}(B)&=\rho_*(\mu_m^{\otimes\mathbb{N}}\otimes\nu)(B)=\mu_m^{\otimes\mathbb{N}}\otimes\nu(\rho^{-1}(B))\\
	&=\nu(\pi(B))\prod_{i=0,b_i\not=0}^{k-1}\mu_m(b_i),
	\end{align*}
	which completes the proof of (3).
	
\end{proof}

The following lemma follows directly from the orthogonality of continuous group characters on the group $U(m)$.
\begin{lem}\label{Integral}
	Let $m\in \N_{\ge 2}\cup \{\infty \}$. For $k\in I(m)$, we have the integral 
	$$
	\int_{U(m)} x^{k} d\mu_m(x)=
	\begin{cases}
	1 & ~~\text{if}~~k=0,\\
	0 & ~~\text{else}.
	\end{cases}
	$$
\end{lem}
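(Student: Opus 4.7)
The plan is to reduce to a direct computation and split on whether $m$ is finite or infinite, since each case is a classical orthogonality relation for characters on a compact abelian group.

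In the case $m \in \N_{\ge 2}$, the measure $\mu_m$ is the uniform distribution on the cyclic group $U(m)$ of $m$-th roots of unity, so the integral collapses to the finite sum $\tfrac{1}{m}\sum_{j=0}^{m-1} e^{2\pi i j k/m}$. For $k = 0$ every term equals $1$ and the value is $1$. For $k \in I(m)\setminus\{0\} = \{1,\dots,m-1\}$ the base $e^{2\pi i k/m}$ is an $m$-th root of unity different from $1$, and summing the resulting geometric series yields $0$.

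In the case $m = \infty$, the measure $\mu_\infty$ is normalized Haar (Lebesgue) measure on $S^1 = U(\infty)$ (this is what ``equidistributed'' means on the circle), and the integral reduces to $\int_0^1 e^{2\pi i k t}\,dt$, which evaluates to $1$ when $k = 0$ and to $0$ for every nonzero $k \in \Z = I(\infty)$ by the standard Fourier orthogonality relation on the torus.

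No step of this presents a real obstacle; the only subtlety worth flagging is that the index set $I(m)$ has been tailored so that the maps $x \mapsto x^{k}$ for $k \in I(m)$ realize pairwise distinct continuous characters of $U(m)$, with $k = 0$ being the unique choice that produces the trivial character. Consequently the lemma is nothing more than the orthogonality of characters on $U(m)$, as the sentence preceding the statement already indicates.
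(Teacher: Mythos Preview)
Your proof is correct and is essentially the same as the paper's: the paper does not write out a proof but simply states that the lemma ``follows directly from the orthogonality of continuous group characters on the group $U(m)$,'' which is exactly the content of your two computations and your closing remark. Your case-by-case verification just unpacks that one-line justification.
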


Now we do some calculations in the following two lemmas. 
\begin{lem}\label{Integral in two case}
	Let $m\in \N_{\ge 2}\cup \{\infty \}$.  Let $0\le a_1< \dots <a_r, r\ge 0$ and $i_s\in I(m)$ for $1\le s\le r$. Then the integral
	\begin{equation}\label{eq: calculation 0}
	\int_{(U(m)\cup \{0\})^\mathbb{N}} F^{(i_1)}\circ S^{a_1}\cdot F^{(i_2)}\circ S^{a_2}\cdots F^{(i_r)}\circ S^{a_r} d\widehat{\nu}
	\end{equation}
	equals $0$ if not all $i_s$ are zero, and equals to
	$$
	\int_{\{0,1\}^\mathbb{N}} F\circ S^{a_1}\cdot F\circ S^{a_2}\cdots F\circ S^{a_r} d\nu,
	$$
	if all $i_s$ are zero.
\end{lem}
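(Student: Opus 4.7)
The plan is to reduce the integral on $(U(m)\cup\{0\})^{\mathbb{N}}$ to a product integral on $U(m)^{\mathbb{N}} \times \{0,1\}^{\mathbb{N}}$ via the pushforward $\rho$, then use independence.

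First I would use the change-of-variables formula, writing
\[
\int_{(U(m)\cup\{0\})^{\mathbb{N}}} \prod_{s=1}^{r} F^{(i_s)}\circ S^{a_s}\, d\widehat{\nu}
= \int_{U(m)^{\mathbb{N}} \times \{0,1\}^{\mathbb{N}}} \prod_{s=1}^{r} \bigl(F^{(i_s)}\circ S^{a_s}\circ \rho\bigr)\, d(\mu_m^{\otimes\mathbb{N}}\otimes\nu),
\]
since $\widehat{\nu}=\rho_{*}(\mu_m^{\otimes\mathbb{N}}\otimes\nu)$. The next step is to simplify the integrand pointwise. For $(x,y)\in U(m)^{\mathbb{N}} \times \{0,1\}^{\mathbb{N}}$, one has $F\circ S^{a_s}\circ\rho(x,y) = x(a_s)y(a_s)$, and since $y(a_s)\in\{0,1\}$ and $x(a_s)\in U(m)\subset S^{1}$, the definition of $(\cdot)^{(i)}$ gives
\[
\bigl(x(a_s)y(a_s)\bigr)^{(i_s)} = y(a_s)\cdot x(a_s)^{i_s},
\]
with the convention that $x(a_s)^{0}=1$. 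This identity handles both cases $i_s=0$ and $i_s\neq 0$ uniformly, and is the small technical point to verify carefully (noting $0^{(0)}=0$ while $1^{(0)}=1$).

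Having this, the integrand factorises as $\bigl(\prod_s y(a_s)\bigr)\bigl(\prod_s x(a_s)^{i_s}\bigr)$, so by Fubini,
\[
\int \prod_{s=1}^r \bigl(F^{(i_s)}\circ S^{a_s}\circ\rho\bigr)\, d(\mu_m^{\otimes\mathbb{N}}\otimes\nu)
= \left[\int_{\{0,1\}^{\mathbb{N}}} \prod_{s=1}^r y(a_s)\, d\nu(y)\right]\cdot \left[\int_{U(m)^{\mathbb{N}}} \prod_{s=1}^r x(a_s)^{i_s}\, d\mu_m^{\otimes\mathbb{N}}(x)\right].
\]
Because the $a_s$ are distinct, the coordinates $x(a_1),\dots,x(a_r)$ are $\mu_m^{\otimes\mathbb{N}}$-independent, so the second bracket equals $\prod_{s=1}^r \int_{U(m)} x^{i_s}\, d\mu_m(x)$.

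Finally I would apply Lemma \ref{Integral}: each factor is $1$ if $i_s=0$ and $0$ otherwise. Hence the whole expression vanishes as soon as some $i_s\neq 0$, which gives the first assertion. When all $i_s=0$, the $x$-integral is $1$, and the remaining $y$-integral is exactly $\int_{\{0,1\}^{\mathbb{N}}} F\circ S^{a_1}\cdots F\circ S^{a_r}\, d\nu$ since $F(y)=y(0)$ on $\{0,1\}^{\mathbb{N}}$. This yields the second assertion. The only real obstacle is the bookkeeping around the extended power $(\cdot)^{(i)}$ at the point $0$; once the identity $(xy)^{(i)}=y\cdot x^{i}$ for $y\in\{0,1\}$, $x\in U(m)$ is cleanly established, the rest reduces to Fubini plus the orthogonality lemma.
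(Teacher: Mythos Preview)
Your proof is correct. The approach differs from the paper's in a useful way that is worth noting.

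The paper argues directly on $(U(m)\cup\{0\})^{\mathbb{N}}$: for the case where some $i_s\neq 0$, it splits the domain into the cylinder $B=\{x: x(a_s)\neq 0\ \forall s\}$ and its complement, observes that the integrand vanishes identically on the complement, and on $B$ invokes the explicit cylinder formula for $\widehat{\nu}$ established in Proposition~\ref{FirstDiagram}~(3) to reduce to a product of one-dimensional integrals handled by Lemma~\ref{Integral}. The case where all $i_s=0$ is treated separately via the factor identity $F^{(0)}\circ S^{a_s}=F\circ S^{a_s}\circ\pi$ and $\pi_*\widehat{\nu}=\nu$.

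You instead pull back through $\rho$ to the product space $U(m)^{\mathbb{N}}\times\{0,1\}^{\mathbb{N}}$ at the outset, and the pointwise identity $(x(a_s)y(a_s))^{(i_s)}=y(a_s)\,x(a_s)^{i_s}$ lets you factor the integrand and apply Fubini in one stroke, covering both cases uniformly. This sidesteps the cylinder split and the appeal to Proposition~\ref{FirstDiagram}~(3); in effect you are re-deriving that formula inline rather than quoting it. The trade-off is that the paper's route makes the structure of $\widehat{\nu}$ explicit once and reuses it, whereas your route is self-contained and slightly shorter for this single lemma.
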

\begin{proof}
	The second case follows directly from the facts that $\pi_*(\widehat{\nu})=\nu$ by Proposition \ref{FirstDiagram} and $F^{(0)}\circ S^{a_s}=F\circ S^{a_s}\circ \pi$. Now we study the first case. Fix a choice of $0\le a_1< \dots <a_r, r\ge 0, i_s\in I(m)$ not all equal to $0$.
	A straightforward computation shows that
	\begin{equation}\label{eq: calculation 1}
	\begin{split}
	&\int_{(U(m)\cup\{0\})^\mathbb{N}} F^{(i_1)}\circ S^{a_1}\cdot F^{(i_2)}\circ S^{a_2}\cdots F^{(i_r)}\circ S^{a_r} d\widehat{\nu}\\
	=&	\int_{(U(m)\cup\{0\})^\mathbb{N}} x(a_1)^{(i_1)}x(a_2)^{(i_2)}\cdots x(a_r)^{(i_r)} d\widehat{\nu}(x),
	\end{split}
	\end{equation}
	where we denote  $x=(x(n))_{n\in \N}$. We write the right-hand side of (\ref{eq: calculation 1}) a sum of two parts:
	\begin{equation}\label{eq: calculation 2}
	\int_{B} x(a_1)^{(i_1)}x(a_2)^{(i_2)}\cdots x(a_r)^{(i_r)} d\widehat{\nu}(x)
	\end{equation}
	and 
	\begin{equation}\label{eq: calculation 3}
	\int_{(U(m)\cup\{0\})^\N\setminus B} x(a_1)^{(i_1)}x(a_2)^{(i_2)}\cdots x(a_r)^{(i_r)} d\widehat{\nu}(x),
	\end{equation}
	where $B$ denotes the cylinder 	$$\left\{x\in (U(m)\cup\{0\})^\N: x(a_s)\not=0, \forall 1\le s\le r  \right\}.$$
	A simple observation shows that the integral (\ref{eq: calculation 3}) equals to $0$ because at least one of $x(a_s)$ are equal to $0$ for all $1\le s\le r$. On the other hand, due to the fact that $y^{(n)}=y^{n}$ for all $y\in U(m)$ and for all $n\in I(m)$, the integral (\ref{eq: calculation 2}) equals to
	\begin{equation}\label{eq: calculation 4}
	\begin{split}
	\int_{B} x(a_1)^{i_1}\cdots x(a_r)^{i_r} d\widehat{\nu}(x)=\nu(B) \prod_{j=1}^{r} \int_{U(m)} x(a_j)^{i_j} d\mu_m(x(a_j)).
	\end{split}	
	\end{equation}	
	By Lemma \ref{Integral}, the equation (\ref{eq: calculation 4}) equals to $0$. Therefore, we conclude that in the first case the integral (\ref{eq: calculation 0}) equals to $0$.
\end{proof}

\begin{lem}\label{E=0}
	$\mathbb{E}^{\widehat{\nu}}[F|\pi^{-1}(\mathcal{B})]=0.$
\end{lem}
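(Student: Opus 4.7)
The plan is to prove this by checking the defining property of conditional expectation directly: that $\int F \cdot (H\circ\pi)\, d\widehat{\nu} = 0$ for every bounded Borel function $H:\{0,1\}^{\N}\to\mathbb{C}$. Since every bounded $\pi^{-1}(\mathcal{B})$-measurable function is of the form $H\circ \pi$, and $F$ itself is bounded and measurable, this identity forces $\mathbb{E}^{\widehat{\nu}}[F\mid \pi^{-1}(\mathcal{B})] = 0$ almost surely.

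To carry out that integral, I would pull it back through $\rho$ to the source space of the diagram \eqref{Dig: first}. Since $\widehat{\nu} = \rho_{*}(\mu_m^{\otimes\N}\otimes\nu)$ by definition, and Proposition \ref{FirstDiagram}(1) gives $\pi\circ\rho = P_2$, while $F\circ\rho(x,y) = x(0)y(0)$ from the definitions of $F$ and $\rho$, the change of variables yields
$$
\int F\cdot(H\circ\pi)\, d\widehat{\nu} \;=\; \int x(0)\, y(0)\, H(y)\, d(\mu_m^{\otimes\N}\otimes\nu)(x,y).
$$
By Fubini, the right-hand side factors as $\bigl(\int_{U(m)} t\, d\mu_m(t)\bigr)\cdot \int_{\{0,1\}^{\N}} y(0)H(y)\, d\nu(y)$, and the first factor vanishes by Lemma \ref{Integral} with $k=1$.

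The only subtle point to track is that Lemma \ref{Integral} is being applied with $k=1$, which requires $1\in I(m)$; this holds automatically for every $m\in \N_{\ge 2}\cup\{\infty\}$. Conceptually, the fibers of $\pi$ correspond to prescribing which coordinates are zero, and on the non-zero coordinates the measure $\widehat{\nu}$ is a product of copies of $\mu_m$, whose mean is zero. I do not anticipate a real obstacle: the structural work was already done in Proposition \ref{FirstDiagram}(3), and only an application of Fubini together with the orthogonality of characters remains.
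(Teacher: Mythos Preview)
Your proof is correct and follows the same strategy as the paper's: verify the defining property of conditional expectation by showing $\int F\cdot G\,d\widehat{\nu}=0$ for all $\pi^{-1}(\mathcal{B})$-measurable $G$, the vanishing ultimately coming from $\int_{U(m)} t\,d\mu_m(t)=0$ (Lemma~\ref{Integral}). The execution differs slightly: the paper tests against indicators of cylinder sets $\pi^{-1}(B)$, invokes the explicit description of $\widehat{\nu}$ from Proposition~\ref{FirstDiagram}(3), and splits into the cases $b_0=0$ (where $F$ vanishes on $\pi^{-1}(B)$) and $b_0\neq 0$ (where the integral factors). You instead pull everything back through $\rho$ to the product space $U(m)^{\N}\times\{0,1\}^{\N}$ and apply Fubini directly, which is a bit cleaner since it handles both cases at once and uses only the definition $\widehat{\nu}=\rho_*(\mu_m^{\otimes\N}\otimes\nu)$ rather than its cylinder-by-cylinder description.
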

\begin{proof}
	It suffices to show $\mathbb{E}^{\widehat{\nu}}[F1_{\pi^{-1}(B)}]=0$ for any cylinder $B$ in $\{0,1\}^{\mathbb{N}}$. Let $B=[b_0,b_1,\dots,b_k]$ be a cylinder in $\{0,1\}^{\mathbb{N}}$. Then $\pi^{-1}(B)=[C_0,C_1,\dots,C_k]$ where $C_i=U(m)$ if $b_i=1$; $C_i=\{0\}$ if $b_i=0$ for $0\le i\le k$.
	
	If $b_0=0$, then 
	$$
	\mathbb{E}^{\widehat{\nu}}[F1_{\pi^{-1}(B)}]=\mathbb{E}^{\widehat{\nu}}[0\cdot1_{\pi^{-1}(B)}]=0.
	$$
	If $b_0\not=0$, then $C_0=U(m)$. A simple calculation shows that
	\begin{align*}
	\mathbb{E}^{\widehat{\nu}}[F1_{\pi^{-1}(B)}]
	&=\int_{\pi^{-1}(B)} x(0) d\widehat{\nu}(x)
	=\nu(B)\int_{U(m)} x(0) d\mu_m(x(0))=0.
	\end{align*}
	The last equality is due to Lemma \ref{Integral}. This completes the proof.
\end{proof}

Now we prove some criteria of Chowla property.
\begin{prop}\label{Equvenlent Of Ch}
	Let $z\in (S^1\cup \{0\})^{\mathbb{N}}$. Then the followings are equivalent.
	\begin{itemize}
		\item [(1)] The sequence $z$ has Chowla property.
		\item [(2)] Q-gen$(z)=\{\widehat{\nu}: \nu\in \text{Q-gen}(\pi(z)) \}$.
		\item[(3)] As $k\to \infty$, $\delta_{N_k, \pi(z)} \to \nu$ if and only if $\delta_{N_k, z} \to \widehat{\nu}$.
	\end{itemize}
\end{prop}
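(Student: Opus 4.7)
The plan is to establish the cycle $(3) \Rightarrow (2) \Rightarrow (1) \Rightarrow (3)$. The first two implications are routine. For $(3) \Rightarrow (2)$: if $\mu \in Q\text{-}gen(z)$ along $(N_k)$, then continuity of $\pi_*$ places $\pi_*\mu =: \nu$ in $Q\text{-}gen(\pi(z))$, and $(3)$ forces $\mu = \widehat{\nu}$; conversely, $(3)$ converts every $\nu \in Q\text{-}gen(\pi(z))$ into $\widehat{\nu} \in Q\text{-}gen(z)$. For $(2) \Rightarrow (1)$: rewrite the Chowla sum as $\int F^{(i_1)}\circ S^{a_1}\cdots F^{(i_r)}\circ S^{a_r}\, d\delta_{N,z}$. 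If this failed to tend to zero for some nontrivial choice, compactness would extract $\delta_{N_k, z} \to \mu$ along which the integral is nonzero; but $(2)$ gives $\mu = \widehat{\nu}$ for some $\nu$, and Lemma~\ref{Integral in two case} says the integral against $\widehat{\nu}$ vanishes, a contradiction.

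The substantive direction is $(1) \Rightarrow (3)$. Assume Chowla property and $\delta_{N_k, \pi(z)} \to \nu$. By compactness of the space of shift-invariant probability measures on $(U(m)\cup\{0\})^\mathbb{N}$, any subsequence of $(\delta_{N_k, z})$ has a further convergent subsequence $\delta_{N_{k_j}, z} \to \mu$, and continuity of $\pi_*$ forces $\pi_*\mu = \nu$. The key claim is $\mu = \widehat{\nu}$; once proved, every cluster point coincides with $\widehat{\nu}$, so $\delta_{N_k, z} \to \widehat{\nu}$, while the reverse half of $(3)$ follows from $\pi_*\widehat{\nu} = \nu$ (Proposition~\ref{FirstDiagram}). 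To identify $\mu$ with $\widehat{\nu}$, by Lemma~\ref{lem: main observation} it suffices to match integrals on every member of $\mathcal{F}$. Fix $\varphi = F^{(i_1)}\circ S^{a_1}\cdots F^{(i_r)}\circ S^{a_r}$. If some $i_s \neq 0$, Chowla property gives $\int \varphi\, d\mu = \lim_j \int \varphi\, d\delta_{N_{k_j}, z} = 0$, matching $\int \varphi\, d\widehat{\nu} = 0$ from Lemma~\ref{Integral in two case}. If all $i_s = 0$, then $\varphi = \psi \circ \pi$ with $\psi = F\circ S^{a_1}\cdots F\circ S^{a_r}$ on $\{0,1\}^\mathbb{N}$, so $\int \varphi\, d\mu = \int \psi\, d\nu$, and Lemma~\ref{Integral in two case} shows $\int \varphi\, d\widehat{\nu}$ equals the same quantity.

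The main obstacle is the identification $\mu = \widehat{\nu}$ inside $(1) \Rightarrow (3)$, which is where the Chowla hypothesis is actually consumed. Once Lemmas~\ref{lem: main observation} and~\ref{Integral in two case} are in hand, the check reduces to a clean comparison of the two measures on the dense algebra generated by $\mathcal{F}$; the remaining work is standard bookkeeping with the pushforward $\pi_*$ and the compactness of the space of shift-invariant probability measures.
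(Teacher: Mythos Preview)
Your proposal is correct and follows essentially the same approach as the paper. The paper isolates the key step into a separate lemma (Lemma~\ref{Week convegence Lemma}): assuming $\pi(z)$ is quasi-generic for $\nu$ along $(N_k)$, one has $\delta_{N_k,z}\to\widehat{\nu}$ if and only if the Chowla averages vanish along $(N_k)$, and then declares that the proposition follows directly. Your argument inlines this lemma into the cycle $(3)\Rightarrow(2)\Rightarrow(1)\Rightarrow(3)$, but the substantive content---identifying any cluster point $\mu$ of $\delta_{N_k,z}$ with $\widehat{\nu}$ by testing against the dense algebra $\mathcal{F}$ of Lemma~\ref{lem: main observation}, handling the nontrivial monomials via the Chowla hypothesis and Lemma~\ref{Integral in two case}, and handling the trivial ones via the pushforward $\pi_*$---is exactly the same.
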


Actually, Proposition \ref{Equvenlent Of Ch} follows directly from the following lemma.

\begin{lem}\label{Week convegence Lemma}
	Let $z\in (S^1\cup \{0\})^{\mathbb{N}}$. Suppose that the sequence $z$ has an index $m\in \N_{\ge 2}\cup \{\infty \}$.  Suppose that $\pi(z)$ is quai-generic for $\nu$ along $(N_k)_{k\in \N}$.
	Then 
	$$
	\delta_{N_k, z} \to \widehat{\nu}, ~~\text{as}~~k\to \infty
	$$
	if and only if 
	\begin{equation}\label{eq: week convegence 0}
	\lim\limits_{k\to \infty}\frac{1}{N_k}\sum_{n=0}^{N_k-1} z^{(i_1)}(n+a_1)\cdot z^{(i_2)}(n+a_2)\cdot \dots \cdot z^{(i_r)}(n+a_r)=0,
	\end{equation}
	for each choice of $0\le a_1< \dots <a_r, r\ge 0, i_s\in I(m)$ not all equal to $0$. 
\end{lem}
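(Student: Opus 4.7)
The plan is to use the Stone--Weierstrass-type density of Lemma \ref{lem: main observation} to reduce weak convergence of the measures $\delta_{N_k,z}$ to convergence of the integrals of a small distinguished family of test functions, and then identify those integrals with the Birkhoff-like averages that appear in the Chowla-type condition (\ref{eq: week convegence 0}). Concretely, I will test against functions of the form
\begin{equation*}
f_{\vec a, \vec i} := F^{(i_1)}\circ S^{a_1}\cdot F^{(i_2)}\circ S^{a_2}\cdots F^{(i_r)}\circ S^{a_r},
\end{equation*}
together with the constant function $1$. Since $(U(m)\cup\{0\})^{\N}$ is a compact metric space and $\delta_{N_k,z}, \widehat{\nu}$ are probability measures, Lemma \ref{lem: main observation} tells us that $\delta_{N_k,z}\to \widehat{\nu}$ weakly if and only if $\int f_{\vec a,\vec i}\,d\delta_{N_k,z}\to \int f_{\vec a,\vec i}\,d\widehat{\nu}$ for every such test function.

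The key computation is that, by definition of $F$ and of $\delta_{N_k, z}$,
\begin{equation*}
\int f_{\vec a,\vec i}\,d\delta_{N_k,z}=\frac{1}{N_k}\sum_{n=0}^{N_k-1} z^{(i_1)}(n+a_1)\cdot z^{(i_2)}(n+a_2)\cdots z^{(i_r)}(n+a_r).
\end{equation*}
On the other hand, Lemma \ref{Integral in two case} already computes the right-hand side: $\int f_{\vec a,\vec i}\,d\widehat{\nu}=0$ whenever at least one $i_s$ is nonzero, whereas if all $i_s$ vanish then $f_{\vec a,\vec i}=F\circ S^{a_1}\cdots F\circ S^{a_r}\circ \pi$ and $\int f_{\vec a,\vec i}\,d\widehat{\nu}=\int F\circ S^{a_1}\cdots F\circ S^{a_r}\,d\nu$. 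Thus the desired convergence splits into two families of equalities, and the task is to verify each.

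For the ``all $i_s=0$'' family, the identity $F^{(0)}\circ S^{a_s}=F\circ S^{a_s}\circ \pi$ yields
\begin{equation*}
\int f_{\vec a,\vec 0}\,d\delta_{N_k,z}=\int F\circ S^{a_1}\cdots F\circ S^{a_r}\,d\delta_{N_k,\pi(z)},
\end{equation*}
so the required convergence to $\int F\circ S^{a_1}\cdots F\circ S^{a_r}\,d\nu$ is automatic from the hypothesis that $\pi(z)$ is quasi-generic for $\nu$ along $(N_k)$ (again invoking the density of products of cylinder coordinates in $C(\{0,1\}^{\N})$). For the ``not all $i_s=0$'' family, the required limit being zero is exactly condition (\ref{eq: week convegence 0}). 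Putting the two halves together gives both implications of the lemma.

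The main (mild) obstacle is really bookkeeping: making sure that the family $\mathcal{F}\cup\{1\}$ is genuinely dense in $C((U(m)\cup\{0\})^{\N})$ (which is supplied by Lemma \ref{lem: main observation}), and that testing on a dense subfamily is enough to conclude weak convergence of probability measures on a compact metric space (a standard approximation argument using that the $\delta_{N_k,z}$ have total mass $1$). No genuinely new estimate is needed; the lemma is essentially a transcription of Chowla-type cancellation into the language of weak-$*$ convergence through the explicit form of $\widehat{\nu}$ given by Proposition \ref{FirstDiagram}(3).
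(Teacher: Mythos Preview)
Your proposal is correct and follows essentially the same approach as the paper: both reduce the question to testing convergence against the family $\mathcal{F}$ of Lemma \ref{lem: main observation}, use Lemma \ref{Integral in two case} to identify the $\widehat{\nu}$-integrals, and handle the ``all $i_s=0$'' case via the factor relation $F^{(0)}\circ S^{a_s}=F\circ S^{a_s}\circ\pi$ together with the quasi-genericity hypothesis on $\pi(z)$. The only cosmetic difference is that for the ``$\Leftarrow$'' direction the paper first passes (by compactness) to a subsequential limit $\theta$ and then identifies $\theta=\widehat{\nu}$ on the dense family, whereas you argue directly that convergence on a uniformly dense family of test functions suffices for weak convergence of probability measures; both arguments are standard and equivalent.
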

\begin{proof}
	$``\Rightarrow"$ Fix a choice of $0\le a_1< \dots <a_r, r\ge 0, i_s\in I(m)$ not all equal to $0$. Observe that 
	\begin{equation}\label{eq: week convegence 1}
	\begin{split}
	&\frac{1}{N_k}\sum_{n=0}^{N_k-1} z^{(i_1)}(n+a_1)\cdot z^{(i_2)}(n+a_2)\cdot \dots \cdot z^{(i_r)}(n+a_r)\\
	=&\frac{1}{N_k}\sum_{n=0}^{N_k-1}   (F^{(i_1)}\circ S^{a_1}\cdot F^{(i_2)}\circ S^{a_2}\cdot \dots \cdot F^{(i_r)}\circ S^{a_r})\circ S^n(z),
	\end{split}	
	\end{equation}	
	for all $k\in \N$.
	Since the measure $\widehat{\nu}$ is quasi-generic for $z$ along $(N_k)_{k\in \N}$, we obtain
	\begin{equation}\label{eq: week convegence 2}
	\begin{split}
	&\lim\limits_{k\to \infty} \frac{1}{N_k}\sum_{n=0}^{N_k-1}   (F^{(i_1)}\circ S^{a_1}\cdot F^{(i_2)}\circ S^{a_2}\cdot \dots \cdot F^{(i_r)}\circ S^{a_r})\circ S^n(z)\\
	&=\int_{(U(m)\cup\{0\})^\mathbb{N}} F^{(i_1)}\circ S^{a_1}\cdot F^{(i_2)}\circ S^{a_2}\cdot \dots \cdot F^{(i_r)}\circ S^{a_r} d\widehat{\nu}.
	\end{split}	
	\end{equation}
	According to Lemma \ref{Integral in two case}, the right-hand side of the above equation equals to $0$, which completes the proof.
	
	$``\Leftarrow"$ Without loss of generality, we may assume that 
	$$
	\delta_{N_k, z} \to \theta, ~~\text{as}~~k\to \infty,
	$$
	for some $S$-invariant probability measure $\theta$ on $(U(m)\cup \{0\})^\mathbb{N}$.
	Now we prove that the measure $\theta$ is exactly the measure $\widehat{\nu}$.  Similarly to (\ref{eq: week convegence 1}) and (\ref{eq: week convegence 2}), we have
	\begin{align*}
	\lim\limits_{k\to \infty}&\frac{1}{N_k}\sum_{n=0}^{N_k-1} z^{(i_1)}(n+a_1)\cdot z^{(i_2)}(n+a_2)\cdot \dots \cdot z^{(i_r)}(n+a_r)\\
	&=\int_{(U(m)\cup\{0\})^\mathbb{N}} F^{(i_1)}\circ S^{a_1}\cdot F^{(i_2)}\circ S^{a_2}\cdots F^{(i_r)}\circ S^{a_r} d\theta,
	\end{align*}
	for each choice  of $0\le a_1< \dots <a_r, r\ge 0, i_s\in I(m)$. According to the hypothesis (\ref{eq: week convegence 0}), we have
	\begin{equation*}
	\int_{(U(m)\cup\{0\})^\mathbb{N}} F^{(i_1)}\circ S^{a_1}\cdot F^{(i_2)}\circ S^{a_2}\cdots F^{(i_r)}\circ S^{a_r} d\theta=0,
	\end{equation*}
	for each choice of $0\le a_1< \dots <a_r, r\ge 0, i_s\in I(m)$ not all equal to $0$. Repeating the same argument in Lemma \ref{Integral in two case}, we have
	\begin{align*}
	&\int_{(U(m)\cup\{0\})^\mathbb{N}} F^{(0)}\circ S^{a_1}\cdot F^{(0)}\circ S^{a_2}\cdots F^{(0)}\circ S^{a_r} d\theta\\
	&=\int_{\{0,1\}^\mathbb{N}} F\circ S^{a_1}\cdot F\circ S^{a_2}\cdots F\circ S^{a_r} d\nu.
	\end{align*}
	Hence
	$$
	\int_{(U(m)\cup\{0\})^\mathbb{N}} f d\theta = \int_{(U(m)\cup\{0\})^\mathbb{N}} f d\widehat{\nu},
	$$
	for all $f$ belonging to the family $\mathcal{F}$ defined in Lemma \ref{lem: main observation}.
	Since the family $\mathcal{F}$ is dense in $C(X)$, by Lemma \ref{lem: main observation}, we conclude that $\theta=\widehat{\nu}$.
\end{proof}

If additionally we assume that the sequence does not take the value $0$ in Proposition \ref{Equvenlent Of Ch}, then we obtain the following criterion of Chowla property for sequences in $(S^1)^\N$.
\begin{cor}\label{Ch not 0}
	Let $z\in (S^1)^{\mathbb{N}}$. Suppose that $z$ has an index $m\in \N_{\ge 2}\cup\{\infty \}$. Then the sequence $z$ has Chowla property if and only if $z$ is generic for $\mu_m^{\otimes\mathbb{N}}$.
\end{cor}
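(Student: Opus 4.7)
The plan is to deduce this corollary directly from Proposition \ref{Equvenlent Of Ch} by exploiting the fact that when $z$ never vanishes, its projection under $\pi$ is trivial. Concretely, since $z \in (S^1)^{\mathbb{N}}$, we have $z(n) \in S^1$ for every $n$, so $z(n)^{(0)} = 1$ for every $n$. Hence $\pi(z) = \mathbf{1} := (1,1,1,\ldots) \in \{0,1\}^{\mathbb{N}}$, which is a fixed point of the shift $S$. As a fixed point, $\mathbf{1}$ is generic for $\delta_{\mathbf{1}}$, and so $Q\text{-}gen(\pi(z)) = \{\delta_{\mathbf{1}}\}$.

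Next I would compute $\widehat{\delta_{\mathbf{1}}}$ using the explicit formula in Proposition \ref{FirstDiagram}(3). Given any cylinder $B = [b_0, b_1, \ldots, b_{k-1}]$ in $(U(m) \cup \{0\})^{\mathbb{N}}$, one has
\[
\widehat{\delta_{\mathbf{1}}}(B) = \delta_{\mathbf{1}}(\pi(B)) \prod_{i=0,\, b_i \neq 0}^{k-1} \mu_m(b_i).
\]
Since $\pi(B) = [b_0^{(0)}, \ldots, b_{k-1}^{(0)}]$ and $\mathbf{1} \in \pi(B)$ if and only if every $b_i \neq 0$, the first factor is $1$ when all $b_i \in U(m)$ and is $0$ as soon as some $b_i = 0$. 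Therefore $\widehat{\delta_{\mathbf{1}}}$ is supported on $U(m)^{\mathbb{N}}$ and its values on cylinders of $U(m)^{\mathbb{N}}$ are $\prod_{i=0}^{k-1} \mu_m(b_i)$; this is precisely the product measure $\mu_m^{\otimes \mathbb{N}}$ regarded as a measure on $(U(m) \cup \{0\})^{\mathbb{N}}$.

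Finally I invoke Proposition \ref{Equvenlent Of Ch}(2): the sequence $z$ has Chowla property if and only if $Q\text{-}gen(z) = \{\widehat{\nu} : \nu \in Q\text{-}gen(\pi(z))\}$. Combining the two identifications above, this condition becomes $Q\text{-}gen(z) = \{\mu_m^{\otimes \mathbb{N}}\}$, which by definition means exactly that $z$ is generic for $\mu_m^{\otimes \mathbb{N}}$.

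There is essentially no technical obstacle here; the entire content of the corollary is packaged into Proposition \ref{Equvenlent Of Ch}, and the only substantive step is the direct verification that $\widehat{\delta_{\mathbf{1}}} = \mu_m^{\otimes \mathbb{N}}$ via the formula in Proposition \ref{FirstDiagram}(3). The only minor point to be careful about is keeping track of the identification between measures on $U(m)^{\mathbb{N}}$ and measures on $(U(m)\cup\{0\})^{\mathbb{N}}$ that are supported on the former.
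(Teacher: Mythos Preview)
Your proposal is correct and follows essentially the same approach as the paper's own proof: both observe that $\pi(z)=(1,1,\dots)$ so that $Q\text{-}gen(\pi(z))=\{\delta_{(1,1,\dots)}\}$, verify that $\widehat{\delta}_{(1,1,\dots)}=\mu_m^{\otimes\mathbb{N}}$, and then apply the equivalence $(1)\Leftrightarrow(2)$ of Proposition~\ref{Equvenlent Of Ch}. Your write-up is in fact more explicit than the paper's, which simply calls the identification $\widehat{\delta}_{(1,1,\dots)}=\mu_m^{\otimes\mathbb{N}}$ a ``straightforward computation'' without spelling it out via Proposition~\ref{FirstDiagram}(3).
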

\begin{proof}
	Observe that $\pi(z)=(1,1,\dots)$. It follows that Q-gen$(\pi(z))=\{\delta_{(1,1,\dots)} \}$, that is, $\pi(z)$ is generic for the measure $\delta_{(1,1,\dots)}$. A straightforward computation shows that the measure $\widehat{\delta}_{(1,1,\dots)}$ is exactly $\mu_m^{\otimes\mathbb{N}}$. Due to the equivalence between $(1)$ and $(2)$ in Proposition \ref{Equvenlent Of Ch}, we conclude that the sequence $z$ has Chowla property if and only if $z$ is generic for $\mu_m^{\otimes\mathbb{N}}$.
\end{proof}

As an analogue of Corollary 15 in \cite{AbaLemDe2017}, we have the following result. The proof is similar to the proof of Corollary 15 in \cite{AbaLemDe2017}, which is omitted here.
\begin{cor}\label{cor:Not SOMO positive entropy}
	Let $z\in (S^1)^{\mathbb{N}}$. Suppose that $z$ has Chowla property. Then the sequence $z$ is not strongly orthogonal on moving orbits to any dynamical systems of positive entropy.
\end{cor}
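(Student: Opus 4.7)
The plan is to adapt the argument of Corollary 15 in \cite{AbaLemDe2017}. Let $m\in\mathbb{N}_{\ge 2}\cup\{\infty\}$ denote the index of $z$. By Corollary \ref{Ch not 0}, the sequence $z$ is generic for the Bernoulli measure $\mu_m^{\otimes\mathbb{N}}$ on the shift $(U(m)^\mathbb{N}, S)$, which is a Kolmogorov system of positive Kolmogorov--Sinai entropy; this is the source of randomness we will exploit.

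Fix an arbitrary topological dynamical system $(X,T)$ with $h_{\mathrm{top}}(T)>0$, and suppose toward a contradiction that $z$ is strongly orthogonal on moving orbits to $(X,T)$. By the variational principle, pick a $T$-invariant ergodic $\mu$ on $X$ with $h(T,\mu)>0$. Sinai's factor theorem supplies $(X,T,\mu)$ with a Bernoulli factor, and Ornstein's isomorphism theorem then furnishes a non-product measurable joining $\lambda$ of $(X,T,\mu)$ with $(U(m)^\mathbb{N}, S, \mu_m^{\otimes\mathbb{N}})$: for instance, the graph joining of an isomorphism between a small enough Bernoulli factor of $(X,T,\mu)$ and a matching Bernoulli factor of $(U(m)^\mathbb{N}, S, \mu_m^{\otimes\mathbb{N}})$. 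After a continuous $L^2(\lambda)$-approximation, one obtains $f\in C(X)$ with $\int f(x)\,\overline{F(u)}\,d\lambda(x,u)\neq 0$.

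Given any increasing sequence $0=b_0<b_1<\cdots$ with $b_{k+1}-b_k\to\infty$, the remaining step is to realize $\lambda$ topologically along the long $T$-orbit segments $[b_k,b_{k+1})$. Using a Jewett--Krieger uniquely-ergodic model of $(X,T,\mu)$ together with a block-pasting construction in the spirit of \cite{AbaLemDe2017}, one produces points $x_k\in X$ such that, for each $k$, the pair $\bigl((T^{n-b_k}x_k)_{b_k\le n<b_{k+1}},\,(z(n))_{b_k\le n<b_{k+1}}\bigr)$ is $\lambda$-quasi-generic. Combining this with the Chowla property of $z$ (which ensures that the blocks of $z$ behave like independent $\mu_m^{\otimes\mathbb{N}}$-samples) forces
\[
\liminf_{K\to\infty}\frac{1}{b_K}\sum_{k=0}^{K-1}\left|\sum_{b_k\le n<b_{k+1}} f(T^{n-b_k}x_k)\,z(n)\right|\;\ge\;\Bigl|\textstyle\int f(x)\,\overline{F(u)}\,d\lambda(x,u)\Bigr|\;>\;0,
\]
contradicting strong orthogonality on moving orbits to $(X,T)$.

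The main obstacle is exactly this last topological realization: producing the $x_k\in X$ that are simultaneously $\lambda$-quasi-generic with the corresponding blocks of $z$, uniformly in $k$. This is where the Jewett--Krieger uniquely-ergodic model of $(X,T,\mu)$, combined with the careful block-pasting argument inherited from \cite{AbaLemDe2017}, does the essential work and is the reason the proof is only referenced here.
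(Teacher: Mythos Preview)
The paper omits the proof entirely, merely pointing to Corollary~15 of \cite{AbaLemDe2017}; your proposal explicitly follows that same reference, and the overall strategy---$z$ generic for the Bernoulli measure $\mu_m^{\otimes\mathbb N}$ via Corollary~\ref{Ch not 0}, then Sinai's factor theorem on an ergodic $\mu$ with $h(T,\mu)>0$ to produce a non-product joining $\lambda$ with $\int f\otimes F\,d\lambda\neq 0$---is the right one and matches the intended approach.

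There is, however, a genuine gap in your ``topological realization'' step. Strong orthogonality on moving orbits is a statement about the \emph{given} topological system $(X,T)$: you must exhibit points $x_k\in X$ and a function $f\in C(X)$. Passing to a Jewett--Krieger uniquely ergodic model $(X',T',\mu')$ gives you a different topological system; the isomorphism with $(X,T,\mu)$ is only measure-theoretic, so neither points nor continuous functions transfer back. Showing that $z$ fails strong MOMO for $(X',T')$ would not yield the conclusion for $(X,T)$. The argument in \cite{AbaLemDe2017} does not go through a model: once one has an ergodic joining $\lambda\in J\bigl((X,T,\mu),(U(m)^{\mathbb N},S,\mu_m^{\otimes\mathbb N})\bigr)$ with $\int f\otimes F\,d\lambda\neq 0$, one works directly in $X\times U(m)^{\mathbb N}$, using that $\lambda$-a.e.\ point is generic for $\lambda$ and that the second marginal is the measure for which $z$ itself is generic, to locate the required $x_k\in X$ matched to the blocks $(z(n))_{b_k\le n<b_{k+1}}$. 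You should drop the Jewett--Krieger detour and argue directly in $(X,T)$.

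A minor correction: if the empirical pairs $(T^{n-b_k}x_k,S^nz)$ are $\lambda$-quasi-generic, the block averages approach $\int f(x)\,F(u)\,d\lambda(x,u)$, not $\int f(x)\,\overline{F(u)}\,d\lambda(x,u)$; adjust the choice of $f$ accordingly.
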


Now we construct a new diagram. Let $(X,T)$ be a topological dynamical system and $x\in X$ be a completely deterministic point. Let $\nu$ be a $S$-invariant measure on $\{0,1\}^\mathbb{N}$. Let $z\in (U(m)\cup\{0\})^\mathbb{N}$ be quasi-generic for $\widehat{\nu}$ along $(N_k)_{k\in \N}$.
Suppose that $(x, z)$ is quasi-generic for some $T\times S$-invariant measure $\rho$ on $X\times U(m)^\mathbb{N}$ along $(N_k)_{k\in \N}$.
We define the diagram
\begin{equation}\label{Dig: Second}
\xymatrix @=0.5cm{
	&	(X\times (U(m)\cup\{0\})^\mathbb{N}, T\times S, \rho ) \ar[d]_{\gamma}  &\ar[r]^{P_2} && ((U(m)\cup\{0\})^\mathbb{N}, S, \widehat{\nu}) \ar[d]_{\pi} \\
	& (X,T,(P_{1})_{*}(\rho))\bigvee (\{0,1\}^\mathbb{N}, S, \nu)  &\ar[r]_{\sigma} &&
	(\{0,1\}^\mathbb{N}, S, \nu, \mathcal{B} )
}
\end{equation}
where the factor map $\pi$ is already defined in diagram (\ref{Dig: first}), $P_i$ is the projection on the $i$-th coordinate for $i=1,2$ and the factor maps $\gamma$ and $\sigma$ are induced by the intermediate factor $(X,T,(P_{1})_{*}(\rho))\bigvee (\{0,1\}^\mathbb{N}, S, \nu)$. 

We have the following lemma.
\begin{lem}\label{Zero Times K}
	The factor maps $\sigma$ and $\pi$ in digram (\ref{Dig: Second}) are relatively independent.	
\end{lem}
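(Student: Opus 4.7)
The plan is to invoke Lemma \ref{Relatively Independent} applied with $\pi_1 := \sigma$ and $\pi_2 := \pi$, whose common target is $(\{0,1\}^\mathbb{N}, S, \nu)$. This requires verifying two conditions: that $\sigma$ has relative zero entropy, and that $\pi$ is relatively Kolmogorov. Granting both, the lemma will force $J(T_1, T_2)$ to consist solely of $\widehat{\Delta}$, which is precisely the desired relative independence.

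To handle $\sigma$, I would first note that since $(x,z)$ is quasi-generic for $\rho$ along $(N_k)_{k\in \mathbb{N}}$, pushing forward by $P_1$ shows that $x$ itself is quasi-generic for $(P_1)_*\rho$ along the same subsequence. The hypothesis that $x$ is completely deterministic then forces $h(T,(P_1)_*\rho)=0$. The source of $\sigma$ is the join $A := (X, T, (P_1)_*\rho) \vee (\{0,1\}^\mathbb{N}, S, \nu)$, which, living as a factor of the ambient system $(X \times (U(m)\cup\{0\})^\mathbb{N}, T\times S, \rho)$ and being generated by $P_1$ and $\pi\circ P_2$, is naturally a joining of its two constituent factor systems. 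Standard subadditivity of entropy for joinings then gives
\[
h(A) \leq h(T,(P_1)_*\rho) + h(S,\nu) = h(S,\nu),
\]
while the reverse inequality holds automatically because $(\{0,1\}^\mathbb{N}, S, \nu)$ is a factor of $A$ via $\sigma$. Consequently the relative entropy of $\sigma$ vanishes.

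For $\pi$, Proposition \ref{FirstDiagram}(2) already supplies the needed dichotomy: either $\pi$ is trivial, or $\pi$ is relatively Kolmogorov. In the Kolmogorov case Lemma \ref{Relatively Independent} applies directly and yields the conclusion. In the trivial case, $\pi$ is (mod zero) an isomorphism, so $((U(m)\cup\{0\})^\mathbb{N}, \widehat{\nu})$ is identified with its common factor, and any joining of $A$ with $(\{0,1\}^\mathbb{N}, \nu)$ respecting that common factor is forced to be the graph joining, which coincides with $\widehat{\Delta}$; relative independence is then automatic. The main technical point I anticipate will be making precise the identification of $A$ as a genuine joining of its two ingredient systems, so that the joining-entropy inequality is legitimately applicable; once that bookkeeping is done, everything else follows by direct citation of the already-established lemmas.
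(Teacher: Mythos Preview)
Your proposal is correct and follows essentially the same route as the paper: both verify that $\sigma$ has relative entropy zero (using $h(T,(P_1)_*\rho)=0$ from complete determinism of $x$) and that $\pi$ is relatively Kolmogorov (via Proposition~\ref{FirstDiagram}), then invoke Lemma~\ref{Relatively Independent}. The only cosmetic differences are that the paper cites ``Pinsker's formula'' from Parry for the relative-entropy-zero step whereas you use the subadditivity bound $h(A)\le h(T,(P_1)_*\rho)+h(S,\nu)$ directly, and you are more careful than the paper in explicitly disposing of the trivial branch of the dichotomy in Proposition~\ref{FirstDiagram}(2).
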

\begin{proof}
	Since $P_1$ is the projection on $X$ and the point $x$ is completely deterministic in $X$, we have the entropy $h(T, (P_{1})_{*}(\rho))=0$. Due to Pinsker's formula (c.f. \cite{p}, Theorem 6.3), the factor map $\sigma$ is of relatively entropy zero. On the other hand, by Proposition \ref{FirstDiagram}, the factor map $\pi$ is relatively Kolmogorov. According to Lemma \ref{Relatively Independent}, we conclude that the factor maps $\sigma$ and $\pi$ are relatively independent.	
\end{proof}

Now we prove the main theorem in this section by using the diagram (\ref{Dig: Second}).
\begin{proof}[Proof of Theorem \ref{Ch sequences are S sequences}]
	Let $(X,T)$ be a topological dynamical system. Let $x\in X$ be a completely deterministic point. 
	Suppose that $(x, z)$ is quasi-generic for some measure $\rho$ along $(N_k)_{k\in \N}$. For any $f\in C(X)$, since $z(n)=F(S^nz)$, we have 
	\begin{align}\label{eq: Ch are S 1}
	\lim\limits_{k\to \infty}\frac{1}{N_k}\sum_{n=0}^{N_k-1} f(T^nx)z(n)=\mathbb{E}^\rho [f\otimes F].
	\end{align}
	On the other hand, by the criteria of Chowla property in Proposition \ref{Equvenlent Of Ch}, without loss of generality, we may assume that $z$ is quasi-generic for some measure $\widehat{\nu}$ along $(N_k)_{k\in \N}$ where $\nu$ is a $S$-invariant probability measure on $\{0,1\}^\N$. Observing $(P_{2})_{*}(\rho)=\widehat{\nu}$, we establish the diagram (\ref{Dig: Second}). By Lemma \ref{Zero Times K}, the factor maps $\sigma$ and $\pi$ are relatively independent. Let $P:=(\sigma\circ\gamma, \pi\circ P_2)$. A simple computation shows that
	$$
	\mathbb{E}^\rho [f\otimes F|P^{-1}(\mathcal{B}\otimes\mathcal{B}) ]=\mathbb{E}^{\gamma_{*}(\rho)} [f| \sigma^{-1}(\mathcal{B})]\mathbb{E}^{\widehat{\nu
	}} [F| \pi^{-1}(\mathcal{B})].
	$$
	Since Lemma \ref{E=0} tells us that $\mathbb{E}^{\widehat{\nu
	}} [F| \pi^{-1}(\mathcal{B})]=0$, we obtain
	$$\mathbb{E}^\rho [f\otimes F|P^{-1}(\mathcal{B}\otimes\mathcal{B}) ]=0.$$
	Hence we have
	\begin{equation}\label{eq: Ch are S 2}
	\mathbb{E}^\rho[f\otimes F]=\mathbb{E}^\rho[\mathbb{E}^\rho [f\otimes F|P^{-1}(\mathcal{B}\otimes\mathcal{B}) ]]=0.
	\end{equation}	
	Combining (\ref{eq: Ch are S 1}) and (\ref{eq: Ch are S 2}), we complete the proof.
\end{proof}

\section{Chowla property for random sequences}\label{Sec: Ch random variable}
In this section, we study sequences of random variables having Chowla property almost surely. We give a sufficient and necessary condition for a stationary random sequence to be a Chowla sequence almost surely. We also construct some dependent random sequences having Chowla property almost surely.



\subsection{Chowla property for stationary random sequences} ~~~\hspace{1cm}\\
Here, we study sequences of stationary random variables.
\begin{prop}\label{Prop: StationaryDynamical}
	Let $\mathcal{Z}=(Z_n)_{n\in \N}$ be a sequence of stationary random variables taking values in $S^1\cup \{0\}$. Suppose that the random sequence $\mathcal{Z}$ has an index $m\in \N_{\ge 2}\cup\{\infty \}$ almost surely. Then almost surely the random sequence $\mathcal{Z}$ is a Chowla sequence if and only if 
	\begin{equation}\label{eq: StationaryDynamical}
	\mathbb{E}[Z_{a_1}^{(i_1)}\cdot Z_{a_2}^{(i_2)}\cdot \dots \cdot Z_{a_r}^{(i_r)}]=0,
	\end{equation}
	for each choice of $1\le a_1< \dots < a_r, r\ge 0, i_s\in I(m)$ not all equal to $0$.  	
\end{prop}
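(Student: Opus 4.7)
The forward direction is soft. Assume almost surely $\mathcal{Z}$ is Chowla. For each nontrivial choice $0\le a_1<\dots<a_r$, $i_s\in I(m)$ not all zero, the average
$$
\frac{1}{N}\sum_{n=0}^{N-1} Z_{n+a_1}^{(i_1)}\cdots Z_{n+a_r}^{(i_r)}
$$
tends to $0$ almost surely; the integrand is bounded by $1$, so by dominated convergence
$$
\lim_{N\to\infty}\mathbb{E}\left[\frac{1}{N}\sum_{n=0}^{N-1} Z_{n+a_1}^{(i_1)}\cdots Z_{n+a_r}^{(i_r)}\right]=0,
$$
and by stationarity every summand equals $\mathbb{E}[Z_{a_1}^{(i_1)}\cdots Z_{a_r}^{(i_r)}]$, giving (6.1).

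For the converse, fix a nontrivial $(a_s,i_s)$ and set $Y_n:=Z_{n+a_1}^{(i_1)}\cdots Z_{n+a_r}^{(i_r)}$. Then $(Y_n)$ is stationary and bounded by $1$, and $\mathbb{E}[Y_n]=0$ by (6.1) and stationarity. The plan is to obtain $L^2$-convergence of $\frac{1}{N}\sum_{n<N} Y_n$ to $0$ via a second-moment estimate and then to upgrade to almost sure convergence. Expanding
$$
\mathbb{E}\left[\left|\frac{1}{N}\sum_{n=0}^{N-1} Y_n\right|^2\right]=\frac{1}{N^2}\sum_{n,\ell=0}^{N-1}\mathbb{E}[Y_n\overline{Y_\ell}],
$$
split the sum according to whether $|n-\ell|>a_r-a_1$ or not. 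In the first case, the index sets $\{n+a_j\}$ and $\{\ell+a_j\}$ are disjoint, and $Y_n\overline{Y_\ell}$ is a product of $Z$-values over $2r$ distinct positions with concatenated exponents $(i_1,\dots,i_r,-i_1,\dots,-i_r)\in I(m)^{2r}$. Since $-i$ is nonzero in $I(m)$ whenever $i$ is, this tuple is nontrivial, and (6.1) forces $\mathbb{E}[Y_n\overline{Y_\ell}]=0$. The remaining pairs with $|n-\ell|\le a_r-a_1$ number at most $(2(a_r-a_1)+1)N$ and each contributes at most $1$ in modulus, so the double sum is $O(N)$ and the second moment is $O(1/N)$.

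To pass from $L^2$ to almost sure convergence, I use the standard subsequence trick: along $N_k=k^2$, Chebyshev plus Borel--Cantelli yield $\frac{1}{N_k}\sum_{n<N_k}Y_n\to 0$ almost surely. For $N\in[N_k,N_{k+1})$, the bound $|Y_n|\le 1$ controls the difference between the partial averages at $N$ and $N_k$ by $O((N_{k+1}-N_k)/N_k)=O(1/k)$, giving almost sure convergence along all $N$. A countable intersection over the admissible choices of $(a_s,i_s)$ then produces the almost sure Chowla property. The main technical point is the combinatorial check in the variance estimate, namely that concatenating the exponents with their negatives still yields a nontrivial tuple in $I(m)^{2r}$ so that hypothesis (6.1) applies to the off-diagonal blocks; once that is secured, the remainder is a routine second-moment plus Borel--Cantelli argument.
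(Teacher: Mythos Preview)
Your proof is correct and follows the same overall strategy as the paper's: both directions rest on stationarity plus a second-moment computation for the products $Y_n:=Z_{n+a_1}^{(i_1)}\cdots Z_{n+a_r}^{(i_r)}$. Two minor differences are worth noting. For the forward implication, the paper invokes Birkhoff's ergodic theorem to get $\mathbb{E}[Z_{a_1}^{(i_1)}\cdots Z_{a_r}^{(i_r)}\mid\mathcal{I}]=0$ and then averages; your dominated-convergence argument is more direct and avoids setting up the ergodic framework. For the converse, the paper proves full orthogonality $\mathbb{E}[Y_n\overline{Y_{n+k}}]=0$ for \emph{every} $k\ge 1$, including the case where the index blocks $\{n+a_j\}$ and $\{n+k+a_j\}$ overlap, by observing that the exponent at the leftmost position $a_s+n$ with $i_s\ne 0$ cannot be cancelled by any contribution from the shifted block. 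You bypass this combinatorial point entirely by handling only the well-separated pairs $|n-\ell|>a_r-a_1$ exactly and absorbing the $O(N)$ near-diagonal terms with the trivial bound $|Y_n\overline{Y_\ell}|\le 1$; this still gives variance $O(1/N)$, and your explicit subsequence-plus-Borel--Cantelli step is precisely the standard proof of the SLLN for orthogonal bounded sequences that the paper quotes as a black box. Both routes yield the same conclusion; yours trades the sharper orthogonality statement for a shorter argument.
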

\begin{proof}
	$``\Rightarrow"$ Since the random sequence $\mathcal{Z}$ is stationary, it can be viewed as a sequence $(f(T^n))_{n\in \N}$ for some $\mathcal{C}$-measurable complex function $f$ and $\mathcal{C}$-measurable transformation $T:X\to X$ where $X$ is a compact metric space and $\mathcal{C}$ is a $\sigma$-algebra on $X$. By Birkhoff's ergodic Theory, for any $1\le a_1< \dots < a_r, r\ge 0, i_s\in I(m)$ not all equal to $0$, almost surely we have 
	\begin{align*}
	&\mathbb{E}[Z_{a_1}^{(i_1)}\cdot Z_{a_2}^{(i_2)}\cdot \dots \cdot Z_{a_r}^{(i_r)}|\mathcal{I}]\\
	=&\lim\limits_{N\to \infty}\frac{1}{N}\sum_{n\le N} Z_{n+a_1}^{(i_1)}\cdot Z_{n+a_2}^{(i_2)}\cdot \dots \cdot Z_{n+a_r}^{(i_r)}=0,
	\end{align*}
	where $\mathbb{E}[f|{\mathcal{I}}]$  is the expectation of $f$ conditional on the $\sigma$-algebra $\mathcal {I}$ consisting of invariant sets of $T$. Thus we conclude that
	$$
	\mathbb{E}[Z_{a_1}^{(i_1)}\cdot Z_{a_2}^{(i_2)}\cdot \dots \cdot Z_{a_r}^{i_r}]=\mathbb{E}[\mathbb{E}[Z_{a_1}^{(i_1)}\cdot Z_{a_2}^{(i_2)}\cdot \dots \cdot Z_{a_r}^{(i_r)}|\mathcal{I}]]=0,
	$$
	for any $1\le a_1< \dots < a_r, r\ge 0, i_s\in I(m)$ not all equal to $0$.
	
	$``\Leftarrow"$ Fix a choice of $1\le a_1< \dots < a_r, r\ge 0, i_s\in I(m)$ not all equal to $0$. Let $s=\min\{1\le j\le r: i_j\not=0\}$. Define the random variables 
	$$
	Y_n:=Z_{a_1+n}^{(i_1)}\cdot Z_{a_2+n}^{(i_2)}\cdot \dots \cdot Z_{a_r+n}^{(i_r)}, ~~\text{for all}~~n\in \mathbb{N}.
	$$
	Then for $k> 1$ and $n\ge 1$, the product $Y_n\cdot Y_{n+k}$ has the form
	\begin{equation}\label{eq: Stationary dynamical}
	Z_{a_1+n}^{(i_1')}\cdot Z_{a_2+n}^{(i_2')}\cdot \dots \cdot Z_{a_r+n+k}^{(i_{r+k-1}')}
	\end{equation}
	where $i_j'\in I(m)$ and $i_j'=i_j+i_{j-k+1} (\text{mod}~~m)$ \footnote{In particular, if $m=\infty$, then $i_j'=i_j+i_{j-k+1}$ for $1\le j\le r+k-1$.} for $1\le j\le r+k-1$ with $i_l:=0$ for $l\le 0$ or $l\ge r+1$.   It is not hard to see that $i_s'\not=0$. It follows that the expectation of (\ref{eq: Stationary dynamical}) vanishes by the hypothesis (\ref{eq: StationaryDynamical}), that is,
	$$\mathbb{E}[Y_nY_{n+k}]=0, ~\text{for all}~ n,k\ge 1.$$
	This means that the random sequence $(Y_n)_{n\in \N}$ is orthogonal \footnote{Recall that a random sequence $(Y_n)_{n\in \N}$ is \textit{orthogonal}, if $\mathbb{E}[Y_nY_m]=0$ for any $n\not=m$.}. Therefore almost surely we have 
	$$
	\lim\limits_{N\to \infty} \frac{1}{N}\sum_{n=0}^{N-1} Y_n=0,
	$$
	i.e.
	$$
	\lim\limits_{N\to \infty}\frac{1}{N}\sum_{n\le N} Z_{n+a_1}^{(i_1)}\cdot Z_{n+a_2}^{(i_2)}\cdot \dots \cdot Z_{n+a_r}^{(i_r)}=0.
	$$
	Since there are only countablely many choices of $1\le a_1< \dots < a_r, r\ge 0, i_s\in I(m)$ not all equal to $0$, we deduce that almost surely
	$$
	\lim\limits_{N\to \infty}\frac{1}{N}\sum_{n\le N} Z_{n+a_1}^{(i_1)}\cdot Z_{n+a_2}^{(i_2)}\cdot \dots \cdot Z_{n+a_r}^{(i_r)}=0,
	$$
	for all choices of $1\le a_1< \dots < a_r, r\ge 0, i_s\in I(m)$ not all equal to $0$.
\end{proof}

We remark that if we drop the assumption that the random sequence $(Z_n)_{n\in \N}$ is stationary, then the proof of the sufficiency in Proposition \ref{Prop: StationaryDynamical} is still valid.

If additionally we suppose the random variables are independent and identically distributed, then we have the following equivalent condition of Chowla property for random sequences.
\begin{cor}
	Suppose that $\mathcal{Z}=(Z_n)_{n\in \N}$ is a sequence of independent and identically distributed random variables taking values in $S^1\cup \{0\}$. Let $m\in \N_{\ge 2}\cup \{0\}$. Suppose that $\mathcal{Z}$ has the index $m$ almost surely. Then $\mathcal{Z}$ is a Chowla sequence almost surely if and only if $\mathbb{E}[Z_1^{(i)}]=0$ for all $i\in I(m)\setminus \{0\}$.
\end{cor}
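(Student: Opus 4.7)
The plan is to deduce this directly from Proposition \ref{Prop: StationaryDynamical}, since an i.i.d.\ sequence is in particular stationary. The corollary therefore amounts to showing that, under the i.i.d.\ assumption, the multi-index vanishing condition \eqref{eq: StationaryDynamical} collapses to the single condition $\mathbb{E}[Z_1^{(i)}]=0$ for every $i\in I(m)\setminus\{0\}$.

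For the necessity, I would apply Proposition \ref{Prop: StationaryDynamical} in the simplest case: take $r=1$, $a_1=1$, and any $i\in I(m)\setminus\{0\}$ (viewed as an $r$-tuple $(i_1)$ which is not the zero tuple). Condition \eqref{eq: StationaryDynamical} then reads $\mathbb{E}[Z_1^{(i)}]=0$.

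For the sufficiency, fix any choice $1\le a_1<\dots<a_r$ and $i_s\in I(m)$ not all equal to $0$. By independence of the $Z_{a_s}$ (which follows from the $a_s$ being distinct and the sequence being i.i.d.), the functions $Z_{a_s}^{(i_s)}$ are independent as well, so
\begin{equation*}
\mathbb{E}\bigl[Z_{a_1}^{(i_1)}\cdot Z_{a_2}^{(i_2)}\cdots Z_{a_r}^{(i_r)}\bigr]=\prod_{s=1}^{r}\mathbb{E}\bigl[Z_{a_s}^{(i_s)}\bigr]=\prod_{s=1}^{r}\mathbb{E}\bigl[Z_1^{(i_s)}\bigr],
\end{equation*}
where the second equality uses identical distribution. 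Since at least one $i_s$ is nonzero, the corresponding factor $\mathbb{E}[Z_1^{(i_s)}]$ vanishes by hypothesis, making the whole product zero. This verifies \eqref{eq: StationaryDynamical}, and Proposition \ref{Prop: StationaryDynamical} then delivers the almost sure Chowla property.

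I do not anticipate any genuine obstacle; the only minor point requiring a moment of thought is that the exponents $i_s=0$ produce factors $\mathbb{E}[Z_1^{(0)}]=\mathbb{P}(Z_1\neq 0)$ which are in general neither $0$ nor $1$, but this is harmless because the product is annihilated by any single vanishing factor coming from an $i_s\neq 0$.
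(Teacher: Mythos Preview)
Your proof is correct and follows exactly the paper's approach: reduce to Proposition~\ref{Prop: StationaryDynamical} and use independence plus identical distribution to factor the expectation in \eqref{eq: StationaryDynamical}, so that a single vanishing factor $\mathbb{E}[Z_1^{(i_s)}]$ with $i_s\neq 0$ kills the product. Your remark about the harmless factors $\mathbb{E}[Z_1^{(0)}]$ is a nice clarification but not needed for the argument.
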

\begin{proof}
	Since $\mathcal{Z}$ is independent and identically distributed, we observe that the equivalent condition $(\ref{eq: StationaryDynamical})$ of Chowla property in Proposition \ref{Prop: StationaryDynamical} becomes 
	$$
	\mathbb{E}[Z_{a_1}^{(i_1)}]\cdot\mathbb{E}[Z_{a_2}^{(i_2)}]\cdot\cdots \cdot \mathbb{E}[Z_{a_r}^{(i_r)}]=0,
	$$
	for each choice of $1\le a_1< \dots < a_r, r\ge 0, i_s\in I(m)$ not all equal to $0$.  This is equivalent to say that $\mathbb{E}[Z_1^{(i)}]=0$ for all $i\in I(m)\setminus \{0\}$.
\end{proof}

We show an example of sequences of independent and identically distributed random variables having Chowla property almost surely.
\begin{ex}
	For any $m\in \N_{\ge 2}\cup\{\infty \}$, let $((U(m)\cup\{0\})^{\mathbb{N}}, S, \mu_m^{\otimes\mathbb{N}})$ be a measurable dynamical system. Recall that $F: (U(m)\cup\{0\})^{\mathbb{N}}\to (U(m)\cup\{0\})$ is the projection on the first coordinate. Let $Z_n:=F\circ S^n$. Then $(Z_n)_{n\in \mathbb{N}}$ is a sequence of independent and identically distributed random variables, which has the index $m$ almost surely. It is easy to check that $\mathbb{E}[Z_1^{(i)}]=0$ for all $i\in I(m)\setminus \{0\}$ by Lemma \ref{Integral}. Therefore, almost surely the sequence $(Z_n)_{n\in \N}$ is a Chowla sequence.
\end{ex}

A natural question is raised whether there are sequences of dependent random variables having Chowla property almost surely. We will prove later (in Section \ref{Subsec: dependent}) that such sequences exist. For this propose, we prove additionally the following lemma.

\begin{lem}\label{lem: moment implies indepence}
	Let $X_1, X_2, \dots, X_k$ be bounded random variables. The random variables $X_1, \dots, X_k$ are independent if and only if
	\begin{equation}\label{eq: moment implies indepence 0}
	\mathbb{E}[X_1^{n_1}X_2^{n_2}\dots X_k^{n_k}]=\mathbb{E}[X_1^{n_1}]\mathbb{E}[X_2^{n_2}]\dots \mathbb{E}[X_k^{n_k}]
	\end{equation}
	for all positive integers $n_1,\dots,n_k$. 
\end{lem}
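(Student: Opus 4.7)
The direction $(\Rightarrow)$ is immediate from the definition of independence: if $X_1,\dots,X_k$ are independent, then $\mathbb{E}[\prod_j g_j(X_j)] = \prod_j \mathbb{E}[g_j(X_j)]$ for any bounded Borel functions $g_j$, and in particular for the monomials $g_j(x)=x^{n_j}$. So the content of the lemma lies in $(\Leftarrow)$.

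My plan is to use the fact that the joint distribution of a family of bounded random variables is determined by its mixed polynomial moments, which is a standard Stone--Weierstrass / moment-problem argument. Since $|X_j|\le a$ for some $a>0$, the random vector $(X_1,\dots,X_k)$ is supported on the compact set $K^k$, where $K=[-a,a]$ in the real case or $K=\overline{D(0,a)}\subset\mathbb{C}$ in the complex case. First I would observe that although (\ref{eq: moment implies indepence 0}) is stated with $n_j\ge 1$, the case where some $n_j=0$ is recovered for free: the factor $X_j^0=1$ drops out and what remains is (\ref{eq: moment implies indepence 0}) applied to the subfamily $\{X_i:i\ne j\}$. So without loss of generality (\ref{eq: moment implies indepence 0}) holds for all nonnegative integer exponents $n_1,\dots,n_k$.

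The core step is to promote factorization from monomials to all continuous functions. For any continuous $f_j:K\to\mathbb{R}$, Weierstrass' theorem provides polynomials $p_{j,\ell}$ with $p_{j,\ell}\to f_j$ uniformly on $K$. Expanding $\prod_j p_{j,\ell}(X_j)$ as a linear combination of monomials $X_1^{n_1}\cdots X_k^{n_k}$ and applying the extended hypothesis termwise gives
$$
\mathbb{E}\Bigl[\prod_{j=1}^{k} p_{j,\ell}(X_j)\Bigr] = \prod_{j=1}^{k} \mathbb{E}[p_{j,\ell}(X_j)].
$$
By uniform convergence on the compact support and boundedness of the $X_j$'s, both sides pass to the limit in $\ell$, yielding $\mathbb{E}[\prod_j f_j(X_j)] = \prod_j \mathbb{E}[f_j(X_j)]$ for every continuous $f_j$. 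A standard approximation of Borel indicator functions by continuous ones then upgrades this to $\mathbb{P}(X_1\in B_1,\dots,X_k\in B_k)=\prod_j \mathbb{P}(X_j\in B_j)$ for all Borel sets $B_j$, which is independence.

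The main obstacle is the complex-valued setting: Weierstrass' theorem on $\overline{D(0,a)}\subset\mathbb{C}$ requires polynomials in both $z$ and $\bar z$, whereas (\ref{eq: moment implies indepence 0}) only provides moments in positive powers of $X_j$. In the paper's intended application the random variables take values in $S^1\cup\{0\}$, where $\bar x=x^{(-1)}$; the natural reading is therefore that the exponents $n_j$ range over $\mathbb{Z}$ (equivalently, that moments of both $X_j$ and $\bar X_j$ are available), in which case the monomials form a conjugation-closed subalgebra separating points of $K$, and the Stone--Weierstrass argument above goes through unchanged.
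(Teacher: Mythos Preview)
Your argument is correct and follows essentially the same route as the paper's: extend the factorization from monomials to polynomials by linearity, then to continuous functions via uniform polynomial approximation on the compact support, and conclude independence. Your final paragraph on the complex case is in fact more careful than the paper's own proof, which simply invokes ``complex polynomials'' and uniform approximation without addressing the need for conjugates; as you observe, in the intended application to $U(m)\cup\{0\}$-valued variables this issue evaporates since $\bar x = x^{m-1}$ there.
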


\begin{proof}
	The necessity being trivial, we just prove the sufficiency. A simple computation allows us to see that if
	$$\mathbb{E}[f_{1,i}(X_1)f_{2,i}(X_2)\dots f_{k,i}(X_k)]=\mathbb{E}[f_{1,i}(X_1)]\mathbb{E}[f_{2,i}(X_2)]\dots \mathbb{E}[f_{k,i}(X_k)]
	$$
	holds for functions $f_{j,i}$ for all $1\le j\le k$ and for all $1\le i\le l$,
	then 
	\begin{equation}\label{eq: moment implies indepence 1}
	\begin{split}
	&\mathbb{E}[\sum_{i=1}^{l}f_{1,i}(X_1)\sum_{i=1}^{l}f_{2,i}(X_2)\dots \sum_{i=1}^{l}f_{k,i}(X_k)]\\
	&=\mathbb{E}[\sum_{i=1}^{l}f_{1,i}(X_1)]\mathbb{E}[\sum_{i=1}^{l}f_{2,i}(X_2)]\dots \mathbb{E}[\sum_{i=1}^{l}f_{k,i}(X_k)].
	\end{split}
	\end{equation}	
	Combining (\ref{eq: moment implies indepence 0}) and (\ref{eq: moment implies indepence 1}), we have
	\begin{equation}\label{eq: moment implies indepence 2}
	\mathbb{E}[P_1(X_1)\dots P_k(X_k)]=\mathbb{E}[P_1(X_1)]\mathbb{E}\dots \mathbb{E}[P_k(X_k)],
	\end{equation}
	for all $P_1, \dots, P_k$ complex polynomials.
	It is well known that the random variables $X_1, \dots, X_k$ are independent if and only if for any continuous functions $f_1, f_2, \dots, f_k$,
	\begin{equation}\label{eq: moment implies indepence 3}
	\mathbb{E}[f_1(X_1)\dots f_k(X_k)]=\mathbb{E}[f_1(X_1)]\dots \mathbb{E}[f_k(X_k)].
	\end{equation}
	It is noticed that if additionally the random variable $X_1, \dots, X_k$ are bounded, then  ``continuous functions'' can be replaced by ``bounded continuous functions'' in the above statement. 
	It is classical that any continuous bounded function is approximated by polynomials under uniform norm. Due to (\ref{eq: moment implies indepence 2}) and the approximation by polynomials, we see that (\ref{eq: moment implies indepence 3}) holds for any bounded continuous functions  $f_1, f_2, \dots, f_k$,  which implies that the random variables $X_1, \dots, X_k$ are independent.
\end{proof}

Now we show a sufficient and necessary condition for a sequence of stationary random variables having Chowla property almost surely to be independent.
\begin{cor}\label{Cor: Ch then independent}
	Let $(X_n)_{n\in \N}$ be a sequence of stationary random variables taking values in $S^1\cup \{0\}$. Let $m\in \N_{\ge 2}\cup \{0\}$. Suppose that almost surely the sequence $(X_n)_{n\in \N}$ has the index $m$ and has Chowla property. Then 
	\begin{itemize}
		\item [(1)] if $m$ is finite, then the sequence $(X_n)_{n\in \N}$ is independent if and only if 
		$$
		\mathbb{E}[|X_{a_1}X_{a_2}\cdots X_{a_r}|]=\mathbb{E}[|X_{a_1}|]\mathbb{E}[|X_{a_2}|]\cdots \mathbb{E}[|X_{a_r}|]
		$$
		for each choice of $1\le a_1< \dots < a_r, r\ge 0;$
		\item [(2)] if $m$ is infinite, then the sequence $(X_n)_{n\in \N}$ is always independent.
	\end{itemize}
	
\end{cor}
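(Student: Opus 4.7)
The plan is to reduce independence to the moment characterization supplied by Lemma \ref{lem: moment implies indepence}: since each $X_{a_j} \in S^1 \cup \{0\}$ is bounded, the variables $X_{a_1}, \ldots, X_{a_r}$ are independent if and only if
\begin{equation*}
\mathbb{E}[X_{a_1}^{n_1} \cdots X_{a_r}^{n_r}] = \mathbb{E}[X_{a_1}^{n_1}] \cdots \mathbb{E}[X_{a_r}^{n_r}]
\end{equation*}
for every choice of positive integers $n_1, \ldots, n_r$. Observing that $X^n = X^{(n)}$ on $S^1 \cup \{0\}$ for every $n \ge 1$, both sides of this identity are Chowla-type expressions to which Proposition \ref{Prop: StationaryDynamical} applies. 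The remainder of the proof is just careful bookkeeping of what Chowla property controls.

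For part (2), $m = \infty$ and $I(m) = \Z$, so every positive integer $n_j$ lies in $I(\infty) \setminus \{0\}$. Proposition \ref{Prop: StationaryDynamical} then forces $\mathbb{E}[X_{a_1}^{(n_1)} \cdots X_{a_r}^{(n_r)}] = 0$ and, taking $r = 1$, $\mathbb{E}[X_{a_j}^{(n_j)}] = 0$ for each $j$. Both sides of the moment identity vanish and independence follows at once from Lemma \ref{lem: moment implies indepence}.

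For part (1), I first establish that $X_n \in U(m) \cup \{0\}$ almost surely. By stationarity, $\mathbb{P}(X_0 \notin U(m) \cup \{0\})$ equals the expected density of $\{n \in \N : X_n \notin U(m) \cup \{0\}\}$, which is $0$ by the definition of index $m$. Hence, for every $a \in \N$ and every positive integer $n$, we have $X_a^n = X_a^{(n \bmod m)}$ almost surely, where $n \bmod m$ is the representative in $I(m)$. Now fix $n_1, \ldots, n_r \ge 1$ and set $r_j := n_j \bmod m$. If some $r_j \ne 0$, Proposition \ref{Prop: StationaryDynamical} kills both sides of the moment identity, so it holds trivially. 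If every $r_j = 0$, then $X_{a_j}^{n_j} = X_{a_j}^{(0)}$, which equals $1$ when $X_{a_j} \ne 0$ and $0$ otherwise, that is, $|X_{a_j}|$; the moment identity therefore reduces to the hypothesis $\mathbb{E}[|X_{a_1}| \cdots |X_{a_r}|] = \mathbb{E}[|X_{a_1}|] \cdots \mathbb{E}[|X_{a_r}|]$. Part (1) follows by Lemma \ref{lem: moment implies indepence}.

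The conceptual point, and the only delicate step, is that Chowla property controls exactly those moments whose exponents are not all $0$ in $I(m)$; the hole it leaves is precisely the absolute-value moment obtained when every exponent reduces to $0$ in $I(m)$. When $m = \infty$ no positive integer is $0$ in $I(\infty) = \Z$, so Chowla alone closes the argument; when $m$ is finite the exponents that reduce to $0$ are exactly the multiples of $m$, and the supplementary absolute-value factorization hypothesis is precisely what is needed to handle that residue class.
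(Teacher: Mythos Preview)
Your proof is correct and follows essentially the same route as the paper's own argument: both reduce independence to the moment factorization in Lemma~\ref{lem: moment implies indepence}, use Proposition~\ref{Prop: StationaryDynamical} to dispose of the moments whose exponents are not all $\equiv 0$ in $I(m)$, and identify the residual case with the absolute-value condition via $X^{km}=|X|$. Your explicit justification that stationarity forces $X_n\in U(m)\cup\{0\}$ almost surely is a point the paper leaves implicit, so your version is in fact slightly more careful.
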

\begin{proof}
	We first consider the case when the index $m$ is finite. Since the sequence $(X_n)_{n\in \N}$ has Chowla property almost surely, by Proposition \ref{Prop: StationaryDynamical}, we obtain that for each choice of $1\le a_1< \dots < a_r, r\ge 0, i_s\in \mathbb{N}\setminus\{0\}$ not all equal to the multiples of $m$,  
	\begin{equation}
	\mathbb{E}[Z_{a_1}^{i_1}Z_{a_2}^{i_2}\cdots Z_{a_r}^{i_r}]=0,
	\end{equation} 
	and at least one of $
	\mathbb{E}[Z_{a_1}^{i_1}], \mathbb{E}[Z_{a_2}^{i_2}], \cdots , \mathbb{E}[Z_{a_r}^{i_r}]$ equal to zero. It follows that
	\begin{equation}\label{eq: Ch then independent 1}
	\mathbb{E}[Z_{a_1}^{i_1}]\cdot\mathbb{E}[Z_{a_2}^{i_2}]\cdot\cdots \cdot \mathbb{E}[Z_{a_r}^{i_r}]=0=\mathbb{E}[Z_{a_1}^{i_1}Z_{a_2}^{i_2}\cdots Z_{a_r}^{i_r}],
	\end{equation}
	for each choice of $1\le a_1< \dots < a_r, r\ge 0, i_s\in \mathbb{N}\setminus\{0\}$ not all equal to the multiples of $m$. Due to Lemma \ref{lem: moment implies indepence} and the equation (\ref{eq: Ch then independent 1}), the sequence $(X_n)_{n\in \N}$ is independent if and only if 
	\begin{equation}
	\mathbb{E}[Z_{a_1}^{k_1m}]\cdot\mathbb{E}[Z_{a_2}^{k_2m}]\cdot\cdots \cdot \mathbb{E}[Z_{a_r}^{k_rm}]=\mathbb{E}[Z_{a_1}^{k_1m}] \mathbb{E}[Z_{a_2}^{k_2m}]\cdots \mathbb{E}[Z_{a_r}^{k_rm}],
	\end{equation}
	for each choice of $1\le a_1< \dots < a_r, r\ge 0, k_s\in \mathbb{N}\setminus\{0\}$. 
	It is not hard to see that $X_n^{km}=|X_n|$ for every $n\in \N$ and for every positive integer $k$. It follows that the sequence $(X_n)_{n\in \N}$ is independent if and only if 
	$$
	\mathbb{E}[|X_{a_1}X_{a_2}\cdots X_{a_r}|]=\mathbb{E}[|X_{a_1}|]\mathbb{E}[|X_{a_2}|]\cdots \mathbb{E}[|X_{a_r}|],
	$$
	for each choice of $1\le a_1< \dots < a_r, r\ge 0.$ This completes the proof of (1).
	
	Now we consider the case when the index $m$ is infinite. Repeating the same reasoning with (\ref{eq: Ch then independent 1}), we obtain 
	\begin{equation}\label{eq: Ch then independent 2}
	\mathbb{E}[Z_{a_1}^{i_1}]\cdot\mathbb{E}[Z_{a_2}^{i_2}]\cdot\cdots \cdot \mathbb{E}[Z_{a_r}^{i_r}]=\mathbb{E}[Z_{a_1}^{i_1}Z_{a_2}^{i_2}\cdots Z_{a_r}^{i_r}]=0,
	\end{equation}
	for each choice of $1\le a_1< \dots < a_r, r\ge 0, i_s\in \mathbb{N}\setminus\{0\}$.  According to Lemma \ref{lem: moment implies indepence} and the equation (\ref{eq: Ch then independent 2}), the sequence $(X_n)_{n\in \N}$ is independent.
\end{proof}

\subsection{Chowla property for independent random sequences}

\hspace{1cm}\\
Now we study Chowla property for independent random sequences. A sequence taking values in $\cup_{m\in \N_{\ge2}}U(m)$, having index $\infty$ and having Chowla property almost surely is also constructed in this subsection.

As it has already been  noticed just after Proposition \ref{Prop: StationaryDynamical}, the proof of the sufficiency in Proposition \ref{Prop: StationaryDynamical} is still valid without the assumption of stationarity. If additionally assuming that the random sequence is independent, we obtain the following proposition which is a direct consequence of the sufficiency in Proposition \ref{Prop: StationaryDynamical}.

\begin{prop}\label{Prop: Independent implies Ch}
	Let $(X_n)_{n\in \N}$ be a sequence of independent random variables which take values in $S^1\cup \{0\}$. Let $m\in \N_{\ge 2}\cup \{0\}$. Suppose that almost surely the sequence $(X_n)_{n\in \N}$ has the index $m$. Suppose that for any $k\in I(m)\setminus\{0\}$, there exists $N>0$ such that for any $n>N$, the expectation $\mathbb{E}[X_n^{k}]=0$. Then almost surely the sequence $(X_n)_{n\in \N}$ is a Chowla sequence.
\end{prop}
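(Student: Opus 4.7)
The plan is to mimic the sufficiency direction of Proposition \ref{Prop: StationaryDynamical}, replacing the use of stationarity (through Birkhoff's theorem on the orthogonal sequence $(Y_n)$) by the asymptotic vanishing of moments that independence furnishes. Fix once and for all a choice $0\le a_1<\dots<a_r$, $r\ge 1$, and $i_s\in I(m)$ not all equal to $0$, and let $s:=\min\{j:i_j\ne 0\}$. Define the bounded random variables
$$
Y_n:=X_{n+a_1}^{(i_1)}\cdot X_{n+a_2}^{(i_2)}\cdots X_{n+a_r}^{(i_r)}.
$$
Since there are only countably many admissible tuples $(a_1,\dots,a_r,i_1,\dots,i_r)$, it suffices to show that for each such tuple, $\frac{1}{N}\sum_{n=0}^{N-1}Y_n\to 0$ almost surely; a countable-union argument then yields Chowla property almost surely. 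By hypothesis, for each $k$ in the finite set $\{i_1,\dots,i_r\}\cap(I(m)\setminus\{0\})$ there is $N_k$ such that $\mathbb{E}[X_n^{(k)}]=0$ for all $n>N_k$; set $N_1:=\max_k N_k$.

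The key step is to show that for $n$ large enough, one has both $\mathbb{E}[Y_n]=0$ and $\mathbb{E}[Y_nY_{n+k}]=0$ for every $k\ge 1$. Using the identity $x^{(a)}x^{(b)}=x^{(a+b)}$ for $x\in S^1\cup\{0\}$ together with the independence of the $X_n$'s, the expectation of $Y_n$ (resp.\ $Y_nY_{n+k}$) factors as a product $\prod_\ell \mathbb{E}[X_\ell^{(c_\ell)}]$ over the indices $\ell$ actually appearing. Consider the index $\ell=n+a_s$: the factor $Y_n$ contributes exponent $i_s$ there; the only way $Y_{n+k}$ could contribute at the same index is if $a_{j'}=a_s-k$ for some $j'$, forcing $a_{j'}<a_s$ and hence $j'<s$, so by minimality of $s$ we have $i_{j'}=0$, contributing $0$ to the exponent. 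Thus the combined exponent at $n+a_s$ is $c_{n+a_s}=i_s\ne 0$ in $I(m)$, and for $n+a_s>N_1$ the hypothesis gives $\mathbb{E}[X_{n+a_s}^{(i_s)}]=0$, which makes the whole product vanish.

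Setting $N_2:=\max(0,N_1-a_s)$, the random sequence $(Y_n)_{n>N_2}$ is bounded, centred, and pairwise orthogonal. The classical strong law for orthogonal sequences (which is what is implicitly invoked at the end of the sufficiency proof of Proposition \ref{Prop: StationaryDynamical}) then gives $\frac{1}{N}\sum_{n=0}^{N-1}Y_n\to 0$ almost surely, and a countable union over the tuples concludes the proof.

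The only genuinely delicate point is the combinatorial verification in the second paragraph, where one must be sure that no cancellation occurs at the index $n+a_s$; this relies essentially on the minimality of $s$ and on the distributivity formula $x^{(a)}x^{(b)}=x^{(a+b)}$ on $S^1\cup\{0\}$. Everything else is a direct transcription of the earlier orthogonal-sequence argument.
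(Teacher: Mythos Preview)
Your proposal is correct and follows exactly the route the paper intends: the paper states that Proposition~\ref{Prop: Independent implies Ch} is a direct consequence of the sufficiency argument in Proposition~\ref{Prop: StationaryDynamical}, and you have carried out precisely that transcription, using independence to factor the expectations and the minimality of $s$ to isolate a surviving nonzero exponent at the index $n+a_s$. Your handling of the ``for $n>N$'' hypothesis (taking $N_1=\max_k N_k$ over the finitely many exponents that appear and discarding the first $N_2$ terms) is the only point the paper leaves implicit, and you have filled it in correctly.
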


We remark that if the index $m$ is finite, then the value $N$ in the sufficient condition in Proposition \ref{Prop: Independent implies Ch} is  uniform for all $k\in I(m)\setminus \{0\}$.

We show the existence of sequences taking values in $\cup_{m\in \N_{\ge2}}U(m)$, having index $\infty$ and having Chowla property almost surely in the following example.

\begin{ex}
	
	Let $(Z_n)_{n\ge 2}$ be a sequence of independent random variables on  $(S^1\cup\{0\})^{\mathbb{N}}$ such that $Z_n$ has the distribution $\mu_n$ for $n\ge 2$.
	It easily follows that almost surely the index of $(Z_n)_{n\ge 2}$ is infinite. 
	It is not difficult to see that 
	$$
	\mathbb{E}[Z_n^k]=0,~\text{for all}~n\ge 2 ~\text{and for all}~k\in I(n)\setminus\{0\}.
	$$
	Thus the sequence $(Z_n)_{n\ge 2}$ satisfies the condition in Proposition \ref{Prop: Independent implies Ch}. Therefore, almost surely the sequence $(Z_n)_{n\ge 2}$ is a Chowla sequence.
\end{ex} 

\subsection{Chowla property for dependent random sequences}\label{Subsec: dependent}
In this subsection, we construct sequences of dependent random variables which have Chowla property almost surely. As it has been proved in Corollary \ref{Cor: Ch then independent} that the sequences of stationary random variables having the index $\infty$ and having Chowla property almost surely are always independent, we will concentrate on the situation where the index is a finite number. 
In what follows, we construct sequences of dependent stationary random variables which have the index $2$ and have Chowla property almost surely. The sequences of dependent stationary random variables which have the finite index $m>2$ and have Chowla property almost surely can be constructed in a similar way.

Let $\mathcal{A}$ be a finite alphabet. Now we study the symbolic dynamics $(\mathcal{A}^{\mathbb{N}}, S, \mu)$ where $\mu$ is the uniform-Bernoulli measure. Let $G:\mathcal{A}^{\mathbb{N}}\to \{-1,0,1\}$ be a continuous function. Denote by $Z_{G,n}:=G\circ S^n$. We are concerned with the random sequence $(Z_{G,n})_{n\in \N}$. It is easy to check that the sequence $(Z_{G,n})_{n\in \N}$ has the index $2$ almost surely.

Obviously, $G^{-1}(t)$ is a clopen subset of $\mathcal{A}^{\mathbb{N}}$ for each $t\in \{-1,0,1\}$. It follows that the function $G$ is decided by finite words, i.e., there exist $l\in \mathbb{N}\setminus\{0\}$ and a function $g$ on $\mathcal{A}^l$ such that for all $x=(x_n)_{n\in \N}\in \mathcal{A}^{\mathbb{N}}$,
$$
G(x)=g(x_0, x_1, \dots, x_{l-1}).
$$

We associate a rooted 3-color tree $\mathcal{T}_G$ with the sequence $(Z_{G,n})_{n\in \N}$.

\vspace{6pt}
{\em We first draw a root as the $0$-th level. We then draw $\sharp \mathcal{A}$ descendants from the vertex in $0$-th level as the $1$-th level. The vertices at the first level are entitled by the elements in $\mathcal{A}$. Draw black the edges between the vertices of the $0$-th level and the vertices of the $1$-th level. Repeat this until we get the $(l-1)$-th level and thus obtain a finite tree with only black edges and the property that each vertex has $\sharp \mathcal{A}$ descendants. Observe that the branch from the root to a vertex in the $(l-1)$-th level can be viewed as a word of length $(l-1)$. Now we show how to draw the $l$-th level. A vertex identified by $a_1a_2\dots a_{l-1}$ at $(l-1)$-th level has a descendant entitled $a\in \mathcal{A}$ if $g(a_1,a_2,\dots,a_{l-1}, a)\not=0$. Moreover, the edge between them is red if $g(a_1,a_2,\dots,a_{l-1}, a)=1$ and is green if $g(a_1,a_2,\dots,a_{l-1}, a)=-1$. In general, for $k\ge l-1$, a vertex identified by $a_1a_2\dots a_{k}$ in $k$-th level has a descendant entitled $a\in \mathcal{A}$ if and only if $g(a_{k-l+2},a_{k-l+3},\dots,a_{k}, a)\not=0$ and the color of the edge between them is red if  $g(a_{k-l+2},a_{k-l+3},\dots,a_{k}, a)=1$ and green if it has the value $-1$. There edges are called the edges of the $k$-th level. Therefore, the rooted 3-color tree that we obtained is named by $\mathcal{T}_G$.}

\vspace{6pt}

We illustrate this by the following examples.
\begin{ex}\label{example}
	\begin{itemize}
		\item [(1)] Let $\mathcal{A}=\{0,1,2\}$. Define 
		$g_1: \mathcal{A}^2\to \{-1,0,1\}$ by 
		\begin{align*}
		&(0, 0)\mapsto 1,  (1, 2)\mapsto 1, (2, 1)\mapsto 1,\\
		&(0,1)\mapsto -1, (1, 0)\mapsto -1,  (2, 2)\mapsto -1,\\
		&(2,0)\mapsto 0, (0,2) \mapsto 0, (1,1) \mapsto 0.
		\end{align*}
		Let $G_1: \mathcal{A}^\N\to \{-1,0,1\}, G_1(x)=g_1(x_0,x_1)$. The associated tree $\mathcal{T}_{G_1}$ is an infinite tree as shown in Figure \ref{Tree1}.
		\begin{figure}[h]
			\centering
			\includegraphics[width=0.5\textwidth]{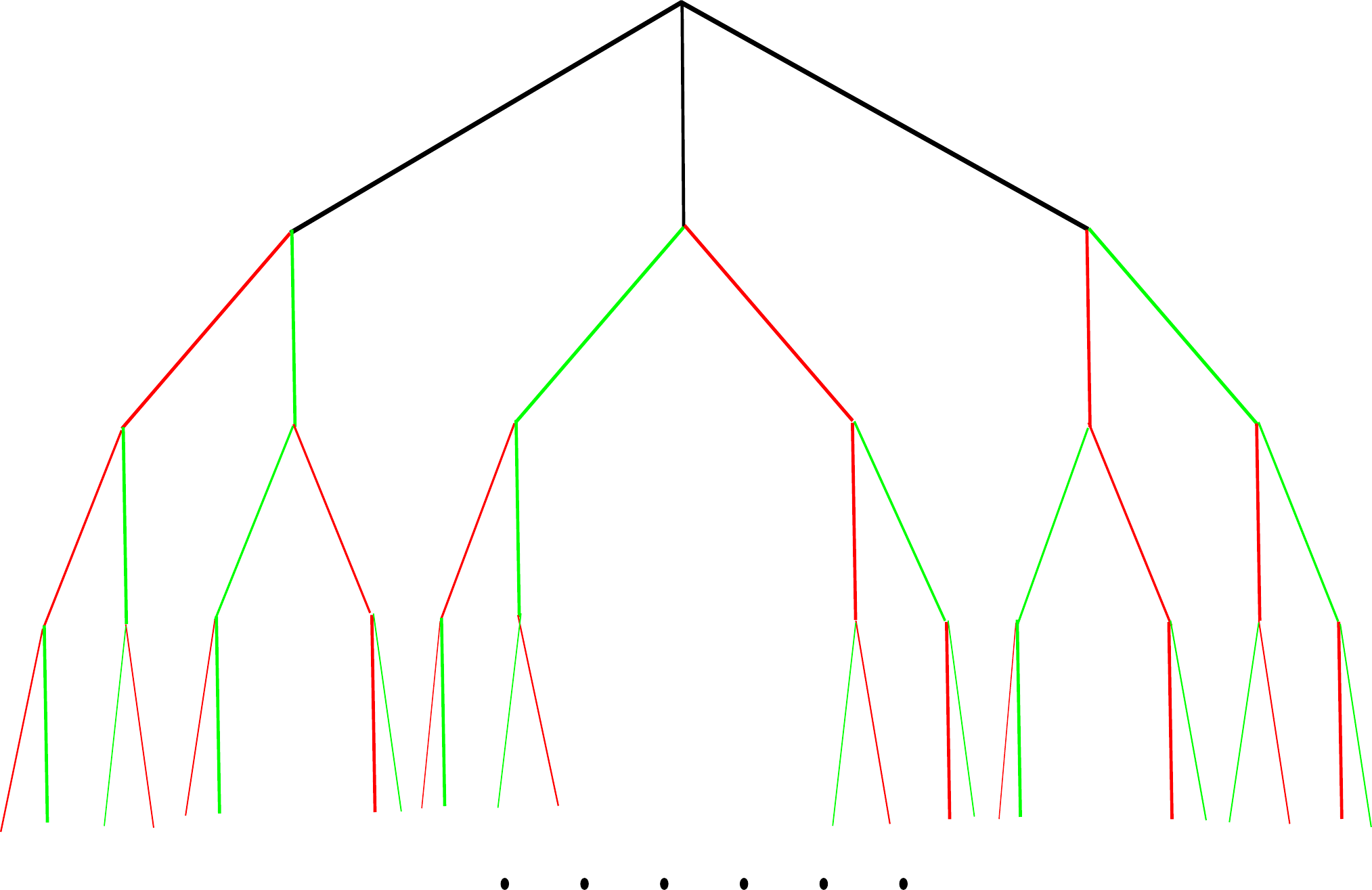}\\
			\caption{The infinite tree $\mathcal{T}_{G_1}$.}
			\label{Tree1}
		\end{figure}
		
		\item[(2)] Let $\mathcal{A}=\{0,1,2,3\}$. Define $g_1: \mathcal{A}^2\to \{-1,0,1\}$ by 
		\vspace{3pt}
		$$
		(0, 1)\mapsto 1, (1, 3)\mapsto -1, (2, 0)\mapsto -1 ~\text{and otherwise}~(a,b)\mapsto 0.
		$$ 
		\vspace{3pt}
		Let $G_2: \mathcal{A}^\N\to \{-1,0,1\}, G_2(x)=g_2(x_0,x_1)$.
		The associated tree $\mathcal{T}_{G_2}$ is a finite tree as shown in Figure \ref{Tree2}.
		\begin{figure}[h]
			\centering
			\includegraphics[width=0.5\textwidth]{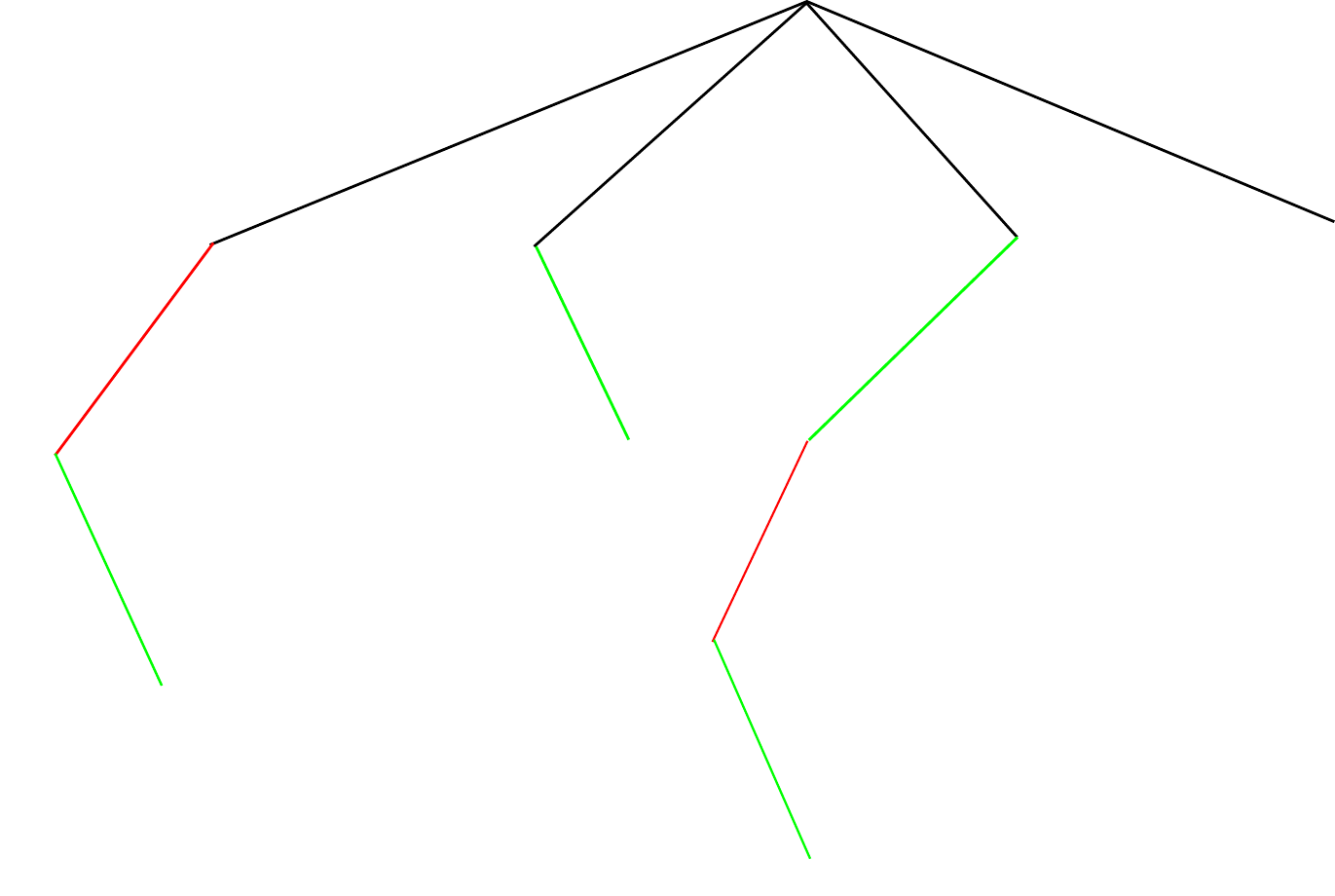}\\
			\caption{The finite tree $\mathcal{T}_{G_2}$.}
			\label{Tree2}
		\end{figure}		
	\end{itemize}
\end{ex}

Two branches from the root to the vetrices of the same level are said to be \textit{of different type} if they have different colors in some edges of the some level. We define the homogeneity for the tree as follows. Two branches are \textit{of same type} if they are not of different type.
\begin{df}\label{Def: Homogeneous}
	Suppose the function $G:\mathcal{A}^\N\to \{-1,0,1\}$ is decided by the words of length $l$. We say that the associated tree $\mathcal{T}_G$ is \textbf{homogeneous} if for each $k\ge l$, the numbers of branches of same type from the root to the vetrices at the $k$-th level are constant.
\end{df}
It is not hard to see that Example \ref{example} $(1)$ is homogeneous but $(2)$ is not.

Actually, Definition \ref{Def: Homogeneous} means that  
\begin{align*}
&\text{Card}\{x\in \mathcal{A}^{\N}: G(x)=c_0, \dots, G(S^kx)=c_k \}
\end{align*}
is a constant which is independent of the choice of $c=(c_n)_{n\in \N}\in \{-1,1\}^{\N}$. Thus the following proposition is a direct consequence of Proposition \ref{Prop: StationaryDynamical}.

\begin{prop}\label{Prop: homogeneous tree}
	Let $A$ be a finite set. Let $G: A^\mathbb{N} \to \{-1,0,1\}$ be a continuous map. Then the random sequence $(Z_{G,n})_{n\in \N}$ is a Chowla sequence almost surely if and only if the associated tree $\mathcal{T}_G$ is homogeneous.
\end{prop}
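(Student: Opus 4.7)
The plan is to reduce the statement to Proposition \ref{Prop: StationaryDynamical} and then identify the correlation condition appearing there with tree homogeneity. First I would observe that $(Z_{G,n})_{n\in\N}$ is a stationary $\{-1,0,1\}$-valued random sequence: the uniform Bernoulli measure $\mu$ on $A^\N$ is $S$-invariant and $Z_{G,n}=G\circ S^n$, so the law is shift-invariant, and the index is $m=2$ with $I(2)=\{0,1\}$. Proposition \ref{Prop: StationaryDynamical} therefore gives that $(Z_{G,n})$ is almost surely Chowla if and only if $\mathbb{E}[Z_{G,a_1}^{(i_1)}\cdots Z_{G,a_r}^{(i_r)}]=0$ for every $1\le a_1<\cdots<a_r$ and every $(i_s)\in\{0,1\}^r$ not all zero.

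Next I would unpack each correlation. With $T=\{s:i_s=1\}$ and the convention $Z_n^{(1)}=Z_n$, $Z_n^{(0)}=\mathbf{1}_{\{Z_n\ne 0\}}$, only the configurations where every $Z_{G,a_s}$ is nonzero contribute, and
\[
\mathbb{E}\Bigl[\prod_s Z_{G,a_s}^{(i_s)}\Bigr]
=\sum_{d\in\{-1,1\}^r}\Bigl(\prod_{s\in T}d_s\Bigr)\,\mu\bigl(Z_{G,a_s}=d_s\text{ for all }s\bigr).
\]
A Walsh--Fourier argument on the cube $\{-1,1\}^r$ shows that these sums vanish for every nonempty $T$ if and only if the sparse marginal $\mu(Z_{G,a_s}=d_s\text{ for all }s)$ is constant in $d\in\{-1,1\}^r$, for every choice of indices $a_1<\cdots<a_r$.

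Finally I would translate this sign-uniformity into tree homogeneity via the observation preceding the proposition: $\mathcal{T}_G$ is homogeneous exactly when $\mu(Z_{G,0}=c_0,\dots,Z_{G,k}=c_k)$ is constant in $c\in\{-1,1\}^{k+1}$ for every $k$. Taking the indices $a_s=s-1$ in step two gives the easy implication ``Chowla $\Rightarrow$ tree homogeneous''. The main obstacle will be the reverse implication, namely passing from sign-uniformity on consecutive blocks to sign-uniformity on arbitrary sparse index sets. For this I would exploit the finite-memory nature of $G$, grouping the $a_s$'s into clusters whose consecutive gaps are strictly less than $l$ (across different clusters the variables $Z_{G,a_s}$ depend on disjoint coordinates of $x$ and hence factor independently under $\mu$), and inside a single cluster I would expand the sparse count by inclusion--exclusion over the zero patterns at the intermediate positions, so that each resulting summand is a consecutive cylinder count to which tree homogeneity applies directly.
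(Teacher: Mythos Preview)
Your reduction to Proposition~\ref{Prop: StationaryDynamical} and the Walsh--Fourier translation of its correlation condition into sign-uniformity are exactly what the paper does (only more explicitly): the paper's entire proof is the sentence ``the following proposition is a direct consequence of Proposition~\ref{Prop: StationaryDynamical}'' after observing that homogeneity means $\mu\bigl(Z_{G,0}=c_0,\dots,Z_{G,k}=c_k\bigr)$ is constant in $c\in\{-1,1\}^{k+1}$. So you and the paper take the same route, and you are right to flag that the passage from this \emph{consecutive} sign-uniformity to the \emph{sparse} sign-uniformity required by Proposition~\ref{Prop: StationaryDynamical} is not automatic --- the paper simply does not address it.

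Your clustering step is sound: whenever $a_{s+1}-a_s\ge l$ the blocks of variables depend on disjoint coordinates of $x$, so the sparse correlation factors and you may assume all gaps are $<l$. The gap is in the final step. Inside a cluster you propose to write
\[
\mu\bigl(Z_{G,a_s}=d_s\ \forall s\bigr)
=\sum_{\epsilon\in\{-1,0,1\}^{M}}\mu\bigl(Z_{G,j}=c_j\ \forall j\in[a_1,a_p]\bigr),
\]
with $M$ the set of intermediate positions and $c$ the concatenation of $d$ and $\epsilon$, and then invoke tree homogeneity on each summand. But tree homogeneity only asserts constancy of $\mu(Z_{G,0}=c_0,\dots,Z_{G,k}=c_k)$ over $c\in\{-1,1\}^{k+1}$; it says nothing about the summands where some $\epsilon_j=0$. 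Those summands are consecutive cylinder counts in $\{-1,0,1\}$ with interior zeros, and you still need to show that each of them is constant in $d$ --- which is exactly the statement ``$\mu(Z_{G,0}=c_0,\dots,Z_{G,k}=c_k)$ depends only on the zero pattern of $c$'', i.e.\ the Chowla-side condition you are trying to prove. So as written the inclusion--exclusion is circular.

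In short: your plan matches (and sharpens) the paper's, and you correctly locate the only nontrivial point, but the inclusion--exclusion does not close the loop. If you want to repair it you will need to show, for every $k$ and every $J\subseteq[0,k]$, that $\mu(Z_{G,j}=c_j\ \forall j)$ is constant on $\{c\in\{-1,0,1\}^{k+1}:c_j=0\Leftrightarrow j\in J\}$, using only the $J=\emptyset$ case (for all $k$) together with stationarity and the sliding-block structure; the boundary cases $J\subseteq\{0,k\}$ follow easily from marginalisation, but interior zeros do not, and that is where a genuine extra argument is needed.
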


Now we show the existence of sequences of dependent random variables having Chowla property almost surely in the following example.
\begin{ex}
	Let $\mathcal{A}=\{-1,0,1, \star \}$. Denote by $\widetilde{\mu}_2=\frac{1}{2}\mu_2+\frac{1}{2}\delta_{\star}$. Then $(\mathcal{A}^\mathbb{N}, S, \widetilde{\mu}_2^\mathbb{N})$ is a measurable dynamical system. Define $g: \mathcal{A}^2 \to U(m)$ by 
	$$
	g(a,b)=
	\begin{cases}
	a~&\text{if}~a\not= \star~\text{and}~b=\star;\\
	0&\text{otherwise}.
	\end{cases}
	$$
	We define $G: \mathcal{A}^\mathbb{N} \to U(m)$ by $(x_n)_{n\in \N} \mapsto g(x_0x_1)$. It is not hard to see that the associated tree $\mathcal{T}_{G}$ is a finite homogeneous tree (see Figure \ref{Tree222}). It follows directly from Proposition \ref{Prop: homogeneous tree} that almost surely $(Z_{G,n})_{n\in \N}$ 
	is a Chowla sequence.
	
	On the other hand, since
	$
	\mathbb{E}[|Z_1|]=\frac{1}{2},
	$
	we obtain that 
	$$
	\mathbb{E}[|Z_1Z_2|]=0\quad\text{but}\quad\mathbb{E}[|Z_1|]\mathbb{E}[|Z_2|]=\frac{1}{4},
	$$
	that is,
	$$
	\mathbb{E}[|Z_1Z_2|]\not=\mathbb{E}[|Z_1|]\mathbb{E}[|Z_2|].
	$$
	By Corollary \ref{Cor: Ch then independent}, the sequence of random variables $(Z_n)$ is not independent.
\end{ex}

\begin{figure}[h]
	\centering
	\includegraphics[width=0.5\textwidth]{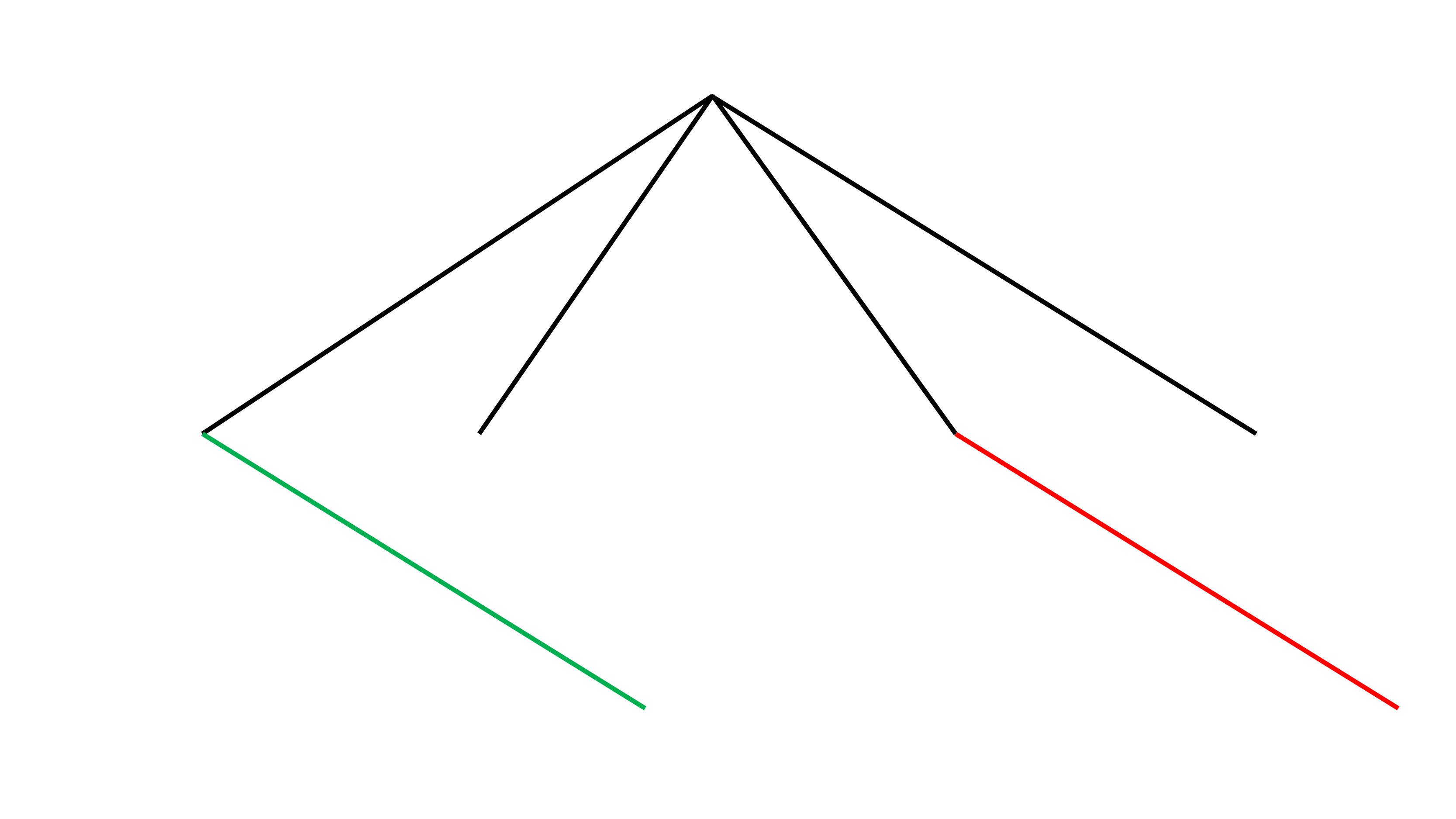}\\
	\caption{The associated tree $\mathcal{T}_{G}$}
	\label{Tree222}
\end{figure}

Let $\mathcal{A}$ be a finite alphabet. Let $(\mathcal{A}^{\mathbb{N}}, S, \mu)$ be a symbolic dynamics where $\mu$ is the uniform-Bernoulli measure. Let $m\in \N_{\ge 2}$. Let $G:\mathcal{A}^{\mathbb{N}}\to U(m)\cup \{0 \}$ be a continuous function with $G^{-1}(t)$ not empty for each $t\in U(m)\cup \{0 \}$. Denote by $Z_{G,n}:=G\circ S^n$. Similarly to the case when $m=2$, we associate a rooted $m+1$-color tree $\mathcal{T}_G$ with the sequence $(Z_{G,n})_{n\in \N}$. Similarly to Definition \ref{Def: Homogeneous} and Proposition \ref{Prop: homogeneous tree}, we define the homogeneity of the associated tree  $\mathcal{T}_G$ and have the following proposition.
\begin{prop}\label{Prop: homogeneous tree m}
	Let $A$ be a finite set and $m\ge 2$ be an integer. Let $G: A^\mathbb{N} \to U(m)\cup\{0\}$ be a continuous map. Then the random sequence $(Z_{G,n})_{n\in \N}$ is a Chowla sequence almost surely if and only if the associated tree $\mathcal{T}_G$ is homogeneous.
\end{prop}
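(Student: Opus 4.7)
The plan is to imitate the derivation of Proposition~\ref{Prop: homogeneous tree} from Proposition~\ref{Prop: StationaryDynamical}. Let $\mu$ denote the uniform-Bernoulli measure on $\mathcal{A}^{\N}$; the process $(Z_{G,n})_{n\in\N}$ is then stationary of index $m$ almost surely, so Proposition~\ref{Prop: StationaryDynamical} reduces the statement to showing that homogeneity of $\mathcal{T}_{G}$ is equivalent to the family of identities
\[
\mathbb{E}\!\left[\prod_{s=1}^{r}Z_{G,a_{s}}^{(i_{s})}\right]=0 \qquad(\star)
\]
for every $0\le a_{1}<\cdots<a_{r}$ and $(i_{s})\in I(m)^{r}$ not all zero. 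Continuity of $G$ on a compact space with finite image forces $G(x)=g(x_{0},\dots,x_{l-1})$ for some $l\ge 1$ and $g\colon \mathcal{A}^{l}\to U(m)\cup\{0\}$, and the branch-counting definition of homogeneity is then the statement that $\mu(Z_{G,i}=c_{i},\,0\le i<N)$ is constant on $U(m)^{N}$ for every $N\ge 1$.

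The first key step is Fourier duality on the abelian group $U(m)^{N}$. Using the identity $\mathbf 1_{Z=c}=\frac{1}{m}\sum_{k=0}^{m-1}c^{-k}Z^{(k)}$ for $c\in U(m)$, together with the convention $Z^{(0)}=\mathbf 1_{Z\neq 0}$, a direct expansion gives
\[
\sum_{c\in U(m)^{N}}\Bigl(\prod_{i}c_{i}^{k_{i}}\Bigr)\mu(Z_{G,i}=c_{i},\,i<N)=\mathbb{E}\!\left[\prod_{i=0}^{N-1}Z_{G,i}^{(k_{i})}\right].
\]
Therefore homogeneity is equivalent, via vanishing of nontrivial Fourier coefficients, to $(\star)$ restricted to the \emph{contiguous} position sets $\{0,1,\dots,N-1\}$. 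In particular, the implication ``$(\star)\Rightarrow$ homogeneity'' is immediate by specialization to $a_{s}=s-1$.

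For the converse I would route through Proposition~\ref{Equvenlent Of Ch}: ergodicity of $\mu$ implies that almost every $Z_{G}$ is generic for $\nu_{*}:=G^{*}_{*}\mu$, and $\pi(Z_{G})$ is generic for $\nu':=\pi_{*}\nu_{*}$, so ``$(Z_{G,n})$ is a Chowla sequence almost surely'' becomes the single measure-theoretic identity $\nu_{*}=\widehat{\nu'}$. By Proposition~\ref{FirstDiagram}(3) this reads
\[
\mu(Z_{G,i}=b_{i},\,0\le i<N)=\mu\bigl(\mathbf 1_{Z_{G,i}\neq 0}=\mathbf 1_{b_{i}\neq 0},\,0\le i<N\bigr)\cdot m^{-\#\{i:\,b_{i}\neq 0\}}
\]
for every $b\in(U(m)\cup\{0\})^{N}$ and every $N$. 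Homogeneity already supplies this identity when $b\in U(m)^{N}$ (both sides equal $\mu(Z_{G,i}\neq 0,\,i<N)\cdot m^{-N}$). I would propagate it to general $b$ by induction on the number of zero coordinates: fixing one zero position $i_{0}$ of $b$, the decomposition $\mathbf 1_{Z_{i_{0}}=0}=1-\sum_{d\in U(m)}\mathbf 1_{Z_{i_{0}}=d}$ splits the left-hand side into the partial-specification term $\mu(Z_{G,i}=b_{i},\,i\neq i_{0})$ minus the $m$ ``replace $b_{i_{0}}$ by $d$'' terms, each of which has one fewer zero and is controlled by the inductive hypothesis; the same algebra on the right-hand side matches term by term.

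The main obstacle is this partial-specification step: after marginalizing $Z_{G,i_{0}}$ one is left with a cylinder on the possibly non-contiguous set $[0,N-1]\setminus\{i_{0}\}$, so the inductive hypothesis must be strengthened to hold for cylinders over arbitrary finite subsets $B\subset\N$ rather than only prefixes. This is the step that genuinely goes beyond Proposition~\ref{Prop: homogeneous tree} and is handled by exploiting the Bernoulli (Kolmogorov) structure of $\mu$ combined with shift-invariance: these let one reduce any finite cylinder to a prefix cylinder on a longer ambient interval, where homogeneity applies directly. Once the cylinder formula holds for all $b$, one concludes $\nu_{*}=\widehat{\nu'}$, hence $(\star)$ for all position sets, which closes the equivalence stated in the proposition.
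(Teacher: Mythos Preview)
The paper gives essentially no argument here: Proposition~\ref{Prop: homogeneous tree m} is declared ``similar to'' Proposition~\ref{Prop: homogeneous tree}, which in turn is asserted to be ``a direct consequence of Proposition~\ref{Prop: StationaryDynamical}'' once homogeneity is rephrased as constancy of $\mu(Z_0=c_0,\dots,Z_k=c_k)$ on $U(m)^{k+1}$. Your write-up is therefore much more detailed than the paper's, and your Fourier identity
\[
\sum_{c\in U(m)^{N}}\Bigl(\prod_i c_i^{k_i}\Bigr)\mu(Z_{G,i}=c_i,\ i<N)=\mathbb{E}\!\Bigl[\prod_{i<N}Z_{G,i}^{(k_i)}\Bigr]
\]
correctly identifies homogeneity with the \emph{contiguous} subfamily of the moment conditions $(\star)$; the implication $(\star)\Rightarrow$ homogeneity then is indeed immediate.

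The difficulty is your converse. You rightly notice that the induction on the number of zeros leaves a marginal $\mu(Z_i=b_i,\ i\in[0,N-1]\setminus\{i_0\})$ on a non-contiguous index set, but your proposed fix---``Bernoulli structure of $\mu$ plus shift-invariance reduces any finite cylinder to a prefix cylinder where homogeneity applies directly''---does not do what you need. Embedding $B=\{0,2\}$ into the prefix $\{0,1,2\}$ forces a sum over the value of $Z_1$, and the summand with $Z_1=0$ is precisely what homogeneity does not control; for block length $l\ge 3$ the windows of $Z_0$ and $Z_2$ overlap, so the Bernoulli property of the alphabet coordinates gives no factorisation either. Your side remark that this step ``genuinely goes beyond Proposition~\ref{Prop: homogeneous tree}'' is also off: the identical gap is already present in the $m=2$ case, and the paper does not fill it there. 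In short, neither the paper's one-line claim nor your inductive scheme supplies the missing piece, namely a proof that constancy of $\mu(Z_0=c_0,\dots,Z_{N-1}=c_{N-1})$ on $U(m)^{N}$ for every $N$ forces the full family of non-contiguous moment identities in Proposition~\ref{Prop: StationaryDynamical} (equivalently, forces the law of $(Z_{G,n})$ to equal $\widehat{\nu'}$).
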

For $m\in \N_{\ge 2}$, we give examples of sequences of dependent random variables having the index $m$ and having Chowla property almost surely. 
\begin{ex}
	Let $m\in \N_{\ge 2}$. Let $\mathcal{A}=U(m)\cup \{0, \star \}$. Denote by $\widetilde{\mu}_m=\frac{1}{2}\mu_m+\frac{1}{2}\delta_{\star}$. Then $(\mathcal{A}^\mathbb{N}, S, \widetilde{\mu}_m^\mathbb{N})$ is a measurable dynamical system. Define $g: \mathcal{A}^2 \to U(m)$ by 
	$$
	g(ab)=
	\begin{cases}
	a~&\text{if}~a\in U(m)~\text{and}~b=\star;\\
	0&\text{otherwise}.
	\end{cases}
	$$
	We define $G: \mathcal{A}^\mathbb{N} \to U(m)$ by $(x_n)_{n\in \N} \mapsto g(x_0x_1)$. It is not hard to see that the associated tree $\mathcal{T}_{G}$ is a finite homogeneous tree. It follows directly from Proposition \ref{Prop: homogeneous tree m} that almost surely $(Z_{G,n})_{n\in \N}$ 
	is a Chowla sequence.
	
	On the other hand, since
	$
	\mathbb{E}[|Z_1|]=\frac{1}{2},
	$
	we obtain that 
	$$
	\mathbb{E}[|Z_1Z_2|]=0~\text{but}~\mathbb{E}[|Z_1|]\mathbb{E}[|Z_2|]=\frac{1}{4},
	$$
	that is,
	$$
	\mathbb{E}[|Z_1Z_2|]\not=\mathbb{E}[|Z_1|]\mathbb{E}[|Z_2|].
	$$
	By Corollary \ref{Cor: Ch then independent}, the sequence of random variables $(Z_n)$ is not independent.
\end{ex}

\subsection{Sarnak property does not imply Chowla property}\label{Sec: S does not imply Ch}

In this subsection, for every $m\in \mathbb{N}_{\ge 2}\cup \{0\}$, we prove that Sarnak property does not imply Chowla property for sequences of index $m$. We first prove a sufficient condition for sequences to be Sarnak sequences.
\begin{lem}\label{Lem: K system S sequence}
	Let $(X, T, \mu )$ be a Kolmogorov system. Let $f: X \to \mathbb{C}$ be a measurable function. Suppose that $\mathbb{E}[f]=0$. Then for every generic point $x\in X$ of $\mu$, the sequence $(f\circ T^n(x))_{n\in \N}$ is a Sarnak sequence.
\end{lem}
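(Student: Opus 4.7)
The plan is to lift the averages $\frac{1}{N}\sum f(T^n x)\,g(R^n y)$ to an integral against a joining of $(X,T,\mu)$ with a $Q$-generic measure on the testing orbit, and then collapse the joining to a product via the disjointness of Kolmogorov systems from zero-entropy systems.

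Fix an arbitrary topological dynamical system $(Y,R)$ with $h(R)=0$, a continuous $g\in C(Y)$, and a point $y\in Y$. I would form the joint empirical measures
\[
\sigma_N \;:=\; \frac{1}{N}\sum_{n=0}^{N-1}\delta_{(T^n x,\,R^n y)}
\]
on $X\times Y$ and, by compactness of invariant measures, extract a subsequence $(N_k)$ along which $\sigma_{N_k}\to\rho$ weakly, with $\rho$ being $(T\times R)$-invariant. The $X$-marginal of $\sigma_{N_k}$ is $\delta_{N_k,x}$, which tends to $\mu$ because $x$ is generic for $\mu$, so the $X$-marginal of $\rho$ is $\mu$; the $Y$-marginal is some $\nu\in Q\text{-}gen(y)$, and $h(R,\nu)=0$ by the variational principle since $h(R)=0$ topologically.

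I would then invoke Lemma~\ref{Relatively Independent} with the trivial one-point system as the common base: the factor map $(Y,R,\nu)\to\{*\}$ has relative entropy $h(R,\nu)=0$, whereas the factor map $(X,T,\mu)\to\{*\}$ is relatively Kolmogorov (this is precisely the definition of $(X,T,\mu)$ being a Kolmogorov system, since every non-trivial factor has positive entropy). The lemma therefore forces $\rho=\mu\otimes\nu$. For continuous $f$, the tensor product $f\otimes g$ is continuous on $X\times Y$, and weak convergence gives
\[
\lim_{k\to\infty}\frac{1}{N_k}\sum_{n=0}^{N_k-1} f(T^n x)\,g(R^n y) \;=\; \int f\otimes g\,d\rho \;=\; \Bigl(\int f\,d\mu\Bigr)\Bigl(\int g\,d\nu\Bigr) \;=\; 0,
\]
using $\mathbb{E}[f]=0$. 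Since every subsequential limit equals $0$, the whole sequence of averages tends to $0$, so $(f\circ T^n(x))_{n\in\N}$ satisfies the condition $\GSo$.

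The main obstacle is that $f$ is only assumed measurable, whereas genericity of $x$ only controls averages of continuous test functions. To handle this I would, given $\varepsilon>0$, use regularity of $\mu$ (Lusin, followed by Tietze) to find a continuous $f_{\varepsilon}$ of mean zero with $\int|f-f_\varepsilon|\,d\mu<\varepsilon$ and $\|f_\varepsilon\|_{\infty}\le\|f\|_{\infty}$, then bound the error $\frac{1}{N}\sum g(R^n y)(f-f_\varepsilon)(T^n x)$ by $\|g\|_\infty$ times an ergodic average of $|f-f_\varepsilon|$; dominating $|f-f_\varepsilon|$ by an upper semicontinuous function of small $\mu$-integral lets the genericity of $x$ give a Portmanteau-type $\limsup$ bound of order $\varepsilon$, and letting $\varepsilon\to 0$ concludes the proof in the measurable case.
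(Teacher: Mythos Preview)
Your core argument---form a joint empirical measure on $X\times Y$, pass to a subsequential limit $\rho$, identify its $X$-marginal as $\mu$ by genericity of $x$ and its $Y$-marginal as a zero-entropy measure, then invoke disjointness of Kolmogorov systems from zero-entropy systems to force $\rho=\mu\otimes\nu$ and conclude via $\mathbb{E}[f]=0$---is exactly the paper's proof. The only cosmetic difference is that the paper verifies the condition~$\GS$ (completely deterministic test points) while you verify~$\GSo$; these are equivalent by Proposition~\ref{prop:equivalent definitions}.

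You are right to flag the measurability issue, and indeed the paper's own proof simply writes $\lim_k \frac{1}{N_k}\sum g(U^n y)f(T^n x)=\mathbb{E}^\rho[g\otimes f]$ without comment, which is only justified when $f\otimes g$ is continuous. However, your proposed patch via Lusin and an upper semicontinuous dominant cannot work. Concretely, Lusin gives a closed $K$ with $\mu(K^c)<\varepsilon$ on which $f$ is continuous, and the natural bound is $|f-f_\varepsilon|\le 2\|f\|_\infty\,\mathbf{1}_{K^c}$; but $K^c$ is open, so this dominant is \emph{lower} semicontinuous and the Portmanteau inequality goes the wrong way. More decisively, the lemma as literally stated is false for arbitrary measurable $f$: in a non-atomic system the orbit $\{T^n x:n\ge 0\}$ has $\mu$-measure zero, so redefining $f$ to be identically $1$ on that orbit leaves $\mathbb{E}[f]=0$ unchanged while making $\frac{1}{N}\sum f(T^n x)g(R^n y)=\frac{1}{N}\sum g(R^n y)$, which need not vanish (take $g\equiv 1$). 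The statement should be read with $f$ continuous---as it is in both of the paper's applications---and with that reading your proof and the paper's coincide.
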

\begin{proof}
	Let $(Y, U)$ be an arbitrary topological dynamical system. Let $y\in Y$ be a completely deterministic point. Let $x$ be a generic point for $\mu$. Let $\rho$ be a quasi-generic measure for $(y,x)$ along $(N_k)_{k\in \N}$ in the product system $(Y\times X, U\times T)$.
	It follows that for any $g\in C(Y)$,
	\begin{equation}\label{eq:K system S sequence 1}
	\lim\limits_{k\to +\infty} \sum_{n=0}^{N_k-1} g\circ U^n(y)\cdot f\circ T^n (x)=\mathbb{E}^\rho[g\otimes f].
	\end{equation}
	Since the systen $(Y, U, \rho_{|Y})$ is of zero entropy and $(X, T, \mu )$ is a Kolmogorov system, $(X, T, \mu )$ and $(Y, U, \rho_{|Y})$ are independent. It follows that for any $g\in C(Y)$,
	\begin{equation}\label{eq:K system S sequence 2}
	\mathbb{E}^\rho[g\otimes f]=\mathbb{E}^{\rho_{|Y}}[g]\cdot\mathbb{E}^\mu[f]=0.
	\end{equation}
	Combining (\ref{eq:K system S sequence 1}) and (\ref{eq:K system S sequence 2}), we deduce that for any $g\in C(Y)$,
	$$
	\lim\limits_{k\to +\infty} \sum_{n=0}^{N_k-1} g\circ U^n(y)\cdot f\circ T^n(x) =0.
	$$
	This completes the proof.
\end{proof}

Comparing the condition in Lemma \ref{Lem: K system S sequence} and the necessary and sufficient condition for sequences of stationary random variables to be Chowla sequences almost surely in Proposition \ref{Prop: StationaryDynamical}, we prove the existence of the sequences which have Sarnak property but not Chowla property. We illustrate this in the following examples.

\begin{ex}
	Recall that a Bernoulli system is always a Kolmogorov system. Let $(\{0,1,2\}^\mathbb{N}, S, B_3)$ be a Bernoulli system where $B_3$ denotes the uniform-Bernoulli measure. Let $g: \{0,1,2\}^2 \to \{-1, 0,1\}$ be a function defined by 
	$$g(0,1)=1, g(1,2)=-1 ~\text{and}~ g(a,b)=0 ~\text{otherwise}.$$ Define $G: \{0,1,2\}^\mathbb{N} \to \{-1, 0,1\}$ by $(x(n))_{n\in \N} \mapsto g(x(0),x(1))$. Let $Z_n:=G\circ S^n$. It is easy to check that 
	$$
	\mathbb{E}[Z_1]=0\quad\text{but}\quad\mathbb{E}[Z_1Z_2]=-1.
	$$
	By Lemma \ref{Lem: K system S sequence}, it follows that almost surely $(Z_n)_{n\in \N}$ is a Sarnak sequence. On the other hand, by Proposition \ref{Prop: StationaryDynamical}, almost surely the sequence $(Z_n)_{n\in \N}$ is not a Chowla sequence.
\end{ex}

\begin{ex}
	Define $h: [0,1)\to [0,1)$ by
	$$
	h(t)=
	\begin{cases}
	\frac{3}{2}t ~&\text{if}~0\le t<\frac{1}{4},\\
	\frac{1}{2}t+\frac{1}{4} ~&\text{if}~\frac{1}{4}\le t<\frac{1}{2},\\
	\frac{3}{2}t-\frac{1}{4} ~&\text{if}~\frac{1}{2}\le t<\frac{3}{4},\\
	\frac{1}{2}t+\frac{1}{2} ~&\text{if}~\frac{3}{4}\le t<1.\\
	\end{cases}
	$$
	We define the function $H: S^1\to S^1$ by the formula $H(e^{2\pi i t})=e^{2\pi ih(t)}$. Let $((S^1)^\N, S, Leb^{\otimes \N})$ be a Kolmogorov system where $Leb$ is the Lebesgue measure on $S^1$. Let $Z_n:=H\circ F\circ S^n$. It is not hard to see that almost surely the index of the sequence $(Z_n)_{n\in \N}$ is $\infty$.
	A simple computation allows us to see that
	$$
	\mathbb{E}[Z_1]=0\quad\text{but}\quad\mathbb{E}[Z_1^2]\not=0.
	$$
	By Lemma \ref{Lem: K system S sequence} and Proposition \ref{Prop: StationaryDynamical}, we conclude that almost surely $(Z_n)_{n\in \N}$ is a Sarnak sequence but not a Chowla sequence.
\end{ex}

\section*{Acknowledgement}
We would like to thank Prof. Shigeki Akiyama for the helpful discussion. We also thank Prof. El Abdalaoui Houcein El Abdalaoui for bringing our attention to the
work of \cite{Aba2017} and \cite{AbaKulLemDe2017}.

\end{document}